\documentclass[11pt]{amsart}
\usepackage[margin=2.6cm]{geometry}
\usepackage{lipsum}

\usepackage{geometry}
\usepackage{latexsym}
\usepackage{amssymb}
\usepackage{amsmath}
\usepackage{color}
\usepackage{bbm}

\usepackage{graphicx}
\usepackage{tcolorbox}
\usepackage{tabularx}

\usepackage{tikz-cd}
\usepackage{tikz}
\usepackage{xparse}

\NewDocumentCommand\DownArrow{O{2.0ex} O{black}}{%
   \mathrel{\tikz[baseline] \draw [<-, line width=0.5pt, #2] (0,0) -- ++(0,#1);}
}
\NewDocumentCommand\UpArrow{O{2.0ex} O{black}}{%
   \mathrel{\tikz[baseline] \draw [->, line width=0.5pt, #2] (0,0) -- ++(0,#1);}
}

\newtheorem{theorem}{Theorem}[section]
\newtheorem{lemma}[theorem]{Lemma}
\newtheorem{proposition}[theorem]{Proposition}
\newtheorem{corollary}[theorem]{Corollary}
\newtheorem{definition}[theorem]{Definition}

\newtheorem{example}[theorem]{Example}
\newtheorem{remark}[theorem]{Remark}

\newcommand\id{\mathop{\rm id}}

\newcommand{\cl}[1]{\mathcal{#1}}
\newcommand{\bb}[1]{\mathbb{#1}}

\begin{document}
\title[The hypergraph isomorphism game, Hopf algebras and Galois extensions]{The hypergraph isomorphism game, Hopf algebras and Galois extensions}

\author[G. Baziotis]{Georgios Baziotis}
\address{Department of Mathematical Sciences\\ University of Delaware\\ 501 Ewing Hall\\ Newark\\ DE 19716\\ USA}\email{baziotis@udel.edu}

\author[A. Chatzinikolaou]{Alexandros Chatzinikolaou}
\address{School of Electrical and Computer Engineering\\
National Technical University of Athens\\ Athens\\ 157 80\\ Greece}
\email{achatzinik@mail.ntua.gr}

\author[G. Hoefer]{Gage Hoefer}
\address{Department of Mathematics \\ Dartmouth College \\ 314 Kemeny Hall \\ Hanover \\ NH 03755 \\ USA}\email{gage.hoefer@dartmouth.edu}

\thanks{2010 {\it  Mathematics Subject Classification.} 20G42, 46L52, 16T20, 81P45}

\thanks{{\it Key words and phrases:} Non-local games, quantum isomorphisms, compact quantum groups, Hopf $*$-algebras, 
bi-Galois extensions, quantum automorphism groups, hypergraphs.}

\date{\today}

\begin{abstract}
   We develop an algebraic and operational framework for quantum isomorphisms of hypergraphs, 
   using tools from compact quantum group theory. We introduce a new synchronous version of the hypergraph isomorphism game whose game algebra uniformly encodes multiple notions of quantum isomorphisms of hypergraphs. We show that there exist hypergraphs that are quantum isomorphic but not classically isomorphic. For graphs, we show that the $*$-algebra of the hypergraph isomorphism game  
   is a quotient of the $*$-algebra of the graph isomorphism game. We further prove that the hypergraph game algebra forms a bi-Galois extension over the quantum automorphism groups of the underlying hypergraphs. This allows us to deduce that the algebraic notion of a quantum isomorphism of hypergraphs coincides with the operational one coming from the existence of perfect quantum strategies. Viewing games themselves as hypergraphs, we analyze isomorphisms and the transfer of strategies within this setting. Finally, we construct a $*$-algebra whose representation theory characterizes distinct classes of quantum isomorphisms between non-local games.
\end{abstract}

\maketitle

\tableofcontents

\section{Introduction}In recent years \emph{non-local games} have emerged as a central framework for studying quantum entanglement, non-locality and the limitations of classical mechanics \cite{jnppsw, lmprsstw,LMR,pal-vid}. These cooperative games are played by two non-communicating players, Alice and Bob, against a referee. In each round, the referee samples inputs $x \in X$ and $y \in Y$ from finite sets, according to a fixed probability distribution on $X \times Y$, and sends $x$ to Alice and $y$ to Bob. The players respond with outputs $a \in A$ and $b \in B$, respectively. They win the round if $\lambda(x,y,a,b) = 1$, where $\lambda : X \times Y \times A \times B \to \{0,1\}$ encodes the rules of the game.

 Although communication between the players is forbidden during play, they may agree on a joint strategy beforehand. The non-communication constraint is formalised by the class of \emph{no-signalling correlations} $(\cl{C}_{\rm ns})$, which are families of probability distributions with well-defined marginals. Within this framework, various subclasses of strategies arise from different physical models governing how the players’ systems are combined. \emph{Local strategies} ($\cl{C}_{\rm loc}$) correspond to the use of classical resources, \emph{quantum} ($\cl{C}_{\rm q}$) to finite dimensional entanglement, \emph{quantum approximate} ($\cl{C}_{\rm qa}$) to liminal entanglement and finally \emph{quantum commuting} ($\cl{C}_{\rm qc}$) arising from the commuting model of quantum mechanics. 
 
From the viewpoint of non-commutative analysis, this line of research has deepened through connections to structural problems in operator algebras, notably the equivalence between Tsirelson’s Problem in quantum information theory and Connes’ Embedding Problem for von Neumann algebras \cite{fri-CEP, jnppsw, oz, pal-vid}, culminating in the resolution of the latter in \cite{jnvwy}.

A non-local game of particular interest is the \emph{graph isomorphism game} introduced by Atserias et al. in \cite{AMRSVV}. There has been growing interest in this game both for its practical role as many problems seem to be reducible to it, and for its recently uncovered connections to quantum symmetries \cite{bigalois, LMR}. Given finite graphs $G_1,G_2$ with vertex sets $V_1,V_2$, set $V := V_1 \sqcup V_2$. Both question and answer sets for the game are $V$. In each round, the referee sends Alice a vertex $v_A \in V_i$ and Bob a vertex $v_B \in V_j$ (with $i,j \in \{1,2\}$). The players must reply with $w_A \in V_{3-i}$ and $w_B \in V_{3-j}$. They win if the relation between the vertices that lie in the same graph (equal; distinct adjacent; distinct non-adjacent) is preserved. One has that $\rm Iso(G_1,G_2)$ admits a perfect classical strategy if and only if the two graphs are isomorphic. 

The authors of \cite{AMRSVV} further defined \emph{quantum}, \emph{quantum commuting}, and \emph{no-signalling} isomorphisms via the existence of perfect strategies of the corresponding types. The graph isomorphism game is also closely related to binary constraint system (BCS) games \cite{cleve-mittal} which are of particular importance for our undestanding of quantum contextuality \cite{mermin90}. The authors in \cite{AMRSVV} give a reduction from BCS games to the graph isomorphism game, yielding examples of graphs that are quantum isomorphic but not classically isomorphic, and quantum commuting isomorphic but not quantum isomorphic.

For a fixed graph $G$, playing the graph isomorphism game on $(G,G)$ shows that perfect classical strategies are in bijection with automorphisms: a winning strategy yields an element of the automorphism group $\mathrm{Aut}(G)$, and any graph automorphism trivially implements a winning strategy for the game. In the non-commutative setting, the \emph{quantum automorphism group} of a graph was introduced independently by Bichon \cite{bichon02} and Banica \cite{banica2}, providing two generalisations of the classical notion. Using Banica’s formulation, \cite{LMR,musto} established deep links between graph isomorphism games and quantum automorphism groups.
\par
Within the broader framework of quantum graphs, Brannan \emph{et al.}~\cite{bigalois} further strengthened the link between non-local games and compact quantum groups, revealing deep connections with Hopf algebras and Galois extensions.  For graphs $G_1,G_2$, the authors of~\cite{bigalois} showed that the $*$-algebra
\(  \mathcal A\!\left(\mathrm{Iso}(G_1,G_2)\right)\)~\cite{hmps},
serve as the \emph{quantum isomorphism space} of $G_1$ and $G_2$ (see also~\cite{bks}), that is, a non-commutative analogue of the algebra of functions on the set of graph isomorphisms. They proved that the non-triviality of this game $*$-algebra guarantees the existence of perfect quantum commuting strategies for the graph isomorphism game. Their results established a powerful bridge between the $*$-algebraic and ${\rm C}^*$-algebraic settings, mediated by techniques from Hopf algebras and bi-Galois extensions, tools that will play a central role in the present work.

\par
As in the case of graphs, hypergraphs also exhibit symmetries beyond the classical setting. The \emph{quantum automorphism group of a hypergraph}, introduced by Faross \cite{frs}, generalises  Bichon’s construction in the graph case and can be viewed as a non-commutative analogue of the algebra of continuous functions on the automorphism group of a hypergraph. From an operational perspective, quantum isomorphisms of hypergraphs were investigated through the \emph{hypergraph isomorphism game}, introduced by the third author and Todorov in \cite{ht_one}. There, a hierarchy of isomorphism types between hypergraphs is established via winning bicorrelations \cite{bhtt} for this game. In the hypergraph isomorphism game of \cite{ht_one}, Alice receives vertices from a fixed vertex set and responds with vertices from the opposite set, while Bob receives edges from a fixed edge set and responds with edges from the opposite set. They win if and only if their questions and answers preserve the incidence relations of the hypergraphs. 

Besides their structural richness, the study of hypergraph isomorphisms is motivated by their connection to isomorphisms between non-local games and transfers of strategies. In particular, \cite{ht_one} shows that every non-local game can be encoded as a hypergraph via the support of the rule function. This correspondence enables the definition of various isomorphism types between non-local games and allows one to analyse the transfer of strategies between them using the simulation paradigm in quantum information theory. In this sense, the hypergraph isomorphism game between the encoded non-local games may be viewed as a ``supergame'' played by the referees of the respective games, who win precisely when both underlying games yield the same outcome (win/lose).

\par
In the present work, we further develop the connections between non-commutative analysis, non-local games, and compact quantum groups. We introduce a new \emph{synchronous} version—that is,  the players must produce identical answers when given identical questions—of the \emph{hypergraph isomorphism game}, extending the framework of \cite{ht_one} and revealing strong links with the quantum automorphism group of a hypergraph \cite{frs} by building on the approach of \cite{bigalois}. 

Given two hypergraphs $\Lambda_i = (V_i, E_i)$, $i = 1,2$, set $V := V_1 \sqcup V_2$ and $E := E_1 \sqcup E_2$. The hypergraph isomorphism game ${\rm HypIso}(\Lambda_1,\Lambda_2)$ is played with question and answer sets $V \sqcup E$: in each round, the referee sends Alice and Bob either a vertex or an edge. Upon receiving their question, each player must respond with an element of the same type (vertex or edge) from the opposite hypergraph. They win if and only if all question–answer pairs preserve the corresponding relations. More precisely, 
\begin{itemize}
    \item[(i)] if both questions and answers are vertices, then vertices within the same hypergraph must satisfy the same relations (equal; distinct and adjacent; distinct and non-adjacent) with respect to the opposite hypergraph, where adjacency means belonging to a common hyperedge;
    \item[(ii)] if both are edges, then the edges within the same hypergraph must preserve the relations (equal; intersecting; disjoint) with edges from the opposite hypergraph;
    \item[(iii)] if one question (and answer) is a vertex and the other an edge, the respective incidence relations must be maintained.
\end{itemize}

Two hypergraphs $\Lambda_1$ and $\Lambda_2$ are said to be \emph{isomorphic}, written $\Lambda_1 \cong \Lambda_2$, if there exist bijections between their vertex and edge sets that preserve the incidence relations. Equivalently, there exist permutation matrices $P_V \in M_{V_1,V_2}(\mathbb C)$ and $P_E \in M_{E_1,E_2}(\mathbb C)$ such that
\( 
A_{\Lambda_1} P_E = P_V A_{\Lambda_2},
\)
where $A_{\Lambda_i}$ denotes the incidence matrix of $\Lambda_i$. Following Faross~\cite{frs}, a natural quantisation of this notion replaces the permutation matrices by \emph{quantum permutation matrices} (or \emph{magic unitaries}) with entries in a non-commutative algebra satisfying the same intertwining relation with the incidence matrices.

To every \emph{synchronous} non-local game $\mathcal G$ one can associate a unital $*$-algebra $\mathcal A(\mathcal G)$ encoding the perfect strategies for $\mathcal G$~\cite{hmps}.
One of the main advantages of the new formulation of the hypergraph isomorphism game, is that it is inherently synchronous, and its associated game $*$-algebra encodes the various types of isomorphisms between the hypergraphs through its representations. In particular, we provide an explicit presentation of this game $*$-algebra as the $*$-algebra generated by the entries of two magic unitaries, indexed by the vertices and edges respectively, which satisfy an intertwining relation with the incidence matrices of the hypergraphs.

Intuitively, the hypergraph isomorphism game captures this notion of (quantum) isomorphism by requiring the players to respond with elements of the same type but from the opposite hypergraph leading naturally to the appearance of two matrices in the game algebra—one indexed by vertices and the other by edges. The vertex relations in the game enforce that the vertex matrix forms a magic unitary, while the edge relations impose the same property on the edge matrix. Finally, the requirement that incidence relations be preserved translates exactly into the intertwining relation between the two.

\par

This first step yields a direct connection to the theory of quantum symmetries of hypergraphs. Indeed, for a single hypergraph $\Lambda := \Lambda_1 = \Lambda_2$, the game $*$-algebra $\mathcal A(\mathrm{HypIso}(\Lambda,\Lambda))$ coincides precisely with the Hopf $*$-algebra associated to the quantum automorphism group $\mathrm{Aut}^{+}(\Lambda)$ \cite{frs}. For a simple graph $G$ the game algebra reduces to Bichon’s quantum automorphism Hopf $*$-algebra \cite{bichon02}:
\( 
\mathcal A({\rm HypIso}(G,G)) \;\cong\; \mathcal O_{\mathrm{Bic}}(G),
\)
thus recovering the result of Faross \cite[Theorem~4.3]{frs} in the $*$-algebraic setting. The distinction between the hypergraph and graph isomorphism games is reflected, in the graph case, by the difference between Banica-type and Bichon-type quantum symmetry formalisms.

 This new game generalises the one in \cite{ht_one} and their perfect strategies naturally describe quantum isomorphisms between hypergraphs. For $t\in\{\rm loc, q, qa, qc\}$, we say that two hypergraphs $\Lambda_1$ and $\Lambda_2$, are called $t$-isomorphic ($\Lambda_1\simeq_{\rm t}\Lambda_2$) if the $\Lambda_1$-$\Lambda_2$ isomorphism game has a perfect $t$-strategy. Moreover, since the hypergraph isomorphism game is synchronous, one can define additional notions of isomorphisms between $\Lambda_1$ and $\Lambda_2$ relying on the \emph{game algebra} of $\operatorname{HypIso}$. In a similar fashion with \cite{bigalois} in the case of (quantum) graphs, we say that $\operatorname{HypIso}(\Lambda_1,\Lambda_2)$ has a \emph{perfect A*-strategy} ($\Lambda_1\simeq_{A^*}\Lambda_2$) if the game algebra is non-trivial, and that it has a \emph{perfect {\rm C}*-strategy} ($\Lambda_1\simeq_{\rm C^*}\Lambda_2$) if the game algebra admits  a unital *-representation into $\cl{B}(H)$ for some Hilbert space $H$. 
As a result, the following chain of implications is immediate, except for the one marked \((\dagger)\):
\begin{figure}[ht]
\begin{tikzcd}[column sep=huge, row sep=large] 
\Lambda_{1}\cong_{\mathrm{loc}}\Lambda_{2}
  \arrow[r, phantom, "\implies"] &
\Lambda_{1}\cong_{\mathrm{q}}\Lambda_{2}
  \arrow[r, phantom, "\implies"] &
\Lambda_{1}\cong_{\mathrm{qa}}\Lambda_{2}
  \arrow[r, phantom, "\implies"] &
\Lambda_{1}\cong_{\mathrm{qc}}\Lambda_{2} \\
& &
\Lambda_{1}\cong_{\mathrm{C^*}}\Lambda_{2}
  \arrow[r, phantom, "\implies"] &
\Lambda_{1}\cong_{\mathrm{A^*}}\Lambda_{2}
\arrow[from=1-4, to=2-3, phantom, "\impliedby" sloped]
\arrow[from=2-4, to=1-4, phantom,
  "{\rotatebox{90}{$\implies$}}"{pos=.5, swap},
  "{(\dagger)}"{pos=.5, xshift=2.0ex}]
\end{tikzcd}
\caption{Implication relations between isomorphisms of $ \Lambda_1, \Lambda_2$.}
\label{fig:iso-implications}
\end{figure}
\noindent The step \((\dagger)\), i.e.\ \(\mathrm{A}^* \Rightarrow \mathrm{qc}\), was established in \cite{bigalois} for the graph isomorphism game. In the present work we prove that the implication \((\dagger)\) holds for the hypergraph isomorphism game as well. Following \cite{bigalois}, our strategy is to identify the game $*$-algebra as an \(\mathcal O(\Lambda_1)\)–\(\mathcal O(\Lambda_2)\) bi-Galois extension (with respect to the Hopf $*$-algebras of quantum automorphisms) and then invoke general results on bi-Galois extensions, relating the existence of ${\rm C}^*$-representations to the presence of invariant states.

\par
The paper is organised as follows. Section~\ref{s_preliminaries} recalls the requisite background on non-local games and their strategy classes, compact quantum groups, the associated Hopf $*$-algebras, and related notions. In Section~\ref{s_hig} we present the hypergraph isomorphism game, characterise the associated game algebra, and describe the different notions of hypergraph isomorphisms arising from perfect strategies.

We then relate the hypergraph and graph isomorphism games. For simple graphs $G_1,G_2$, we show that the hypergraph game $*$-algebra $\mathcal A({\rm HypIso}(G_1,G_2))$ is a quotient of the graph game $*$-algebra $\mathcal A({\rm Iso}(G_1,G_2))$. As a consequence, if two graphs are $\rm t$-isomorphic as hypergraphs, they are also $\rm t$-isomorphic as graphs. We also show that the two games are in general distinct by exhibiting a perfect quantum strategy for the graph isomorphism game that is not a perfect strategy for the hypergraph game. Leveraging these connections, we present hypergraphs $\Lambda_1,\Lambda_2$ that are quantum isomorphic but not (classically) isomorphic.

\par In Section \ref{sec_bigalois} we focus on showing  the implication \((\dagger)\) in figure \ref{fig:iso-implications}. More specifically that $ \Lambda_1 \cong_{A^*} \Lambda_2$ implies $\Lambda_1 \cong_{\rm qc} \Lambda_2 $. We denote the game algebra $\cl{A}(\operatorname{HypIso}(\Lambda_1,\Lambda_2)$ of the hypergraph isomorphism game by $\cl{O}(\Lambda_1,\Lambda_2)$ and think of it as the quantum isomorphism space of $\Lambda_1$ and $\Lambda_2$. Similarly with the (quantum) graph treatment in \cite{bigalois}, we show that the quantum isomorphism space $\cl{O}(\Lambda_1,\Lambda_2)$ defines a left (resp. right) *-comodule algebra structure over $\cl{O}(\Lambda_1)$ (resp. $\cl{O}(\Lambda_2)$) and is in fact a $\cl{O}(\Lambda_1)-\cl{O}(\Lambda_2)$ bi-Galois extension. Furthermore, we show the existence of bi-invariant states $\tau:\cl{O}(\Lambda_1,\Lambda_2)\rightarrow\bb{C}$ and prove that the non-triviality of $\cl{O}(\Lambda_1,\Lambda_2)$ is equivalent with the existence of quantum isomorphisms between $\Lambda_1$ and $\Lambda_2$.

\par
As an application, in Section~\ref{s_nlg_trans} we revisit the notion of non-local game isomorphisms from \cite{ht_one} and give an algebraic characterisation of ${\rm t}$-isomorphic non-local games. Specifically, we construct a universal unital \emph{no-signalling} $*$-algebra which, when non-zero, encodes quantum isomorphisms between the games. Viewing non-local games $\mathcal G_1$ and $\mathcal G_2$ as hypergraphs, we show that the no-signalling algebra $\mathcal O_{\rm NS}(\mathcal G_1,\mathcal G_2)$ is a bi-Galois extension over quotient Hopf $*$-algebras associated to the quantum automorphism groups of the two games, whose generators satisfy strengthened no-signalling relations. Representations of this algebra of various types are in bijection with strategies of the corresponding types implementing game isomorphisms.

In particular, this framework enables transfer of strategies: if $\mathcal G_1$ and $\mathcal G_2$ are ${\rm t}$-isomorphic, then $\mathcal G_1$ admits a perfect ${\rm t}$-strategy if and only if $\mathcal G_2$ admits a perfect ${\rm t}$-strategy.

\medskip

\noindent 
{\bf Acknowledgments.} The authors would like to thank Ivan G. Todorov and Lyudmila Turowska for helpful comments and suggestions. The first named author was supported by NSF grants 
2115071 and 2154459.
The second named author was supported by the European Union -- Next Generation EU (Implementation Body: HFRI. Project
name: Noncommutative Analysis: Operator Systems and Nonlocality. HFRI
Project Number: 015825)

\section{Notation and 
preliminaries}\label{s_preliminaries}

For a given Hilbert space $H$, we denote by $\cl{B}(H)$ the C$^*$-algebra of bounded linear operators on $H$ and write $I=I_H$ for the identity operator. We further assume that all inner products are linear in the first coordinate. For a finite set $X$, we use $M_{X}$ to denote the full matrix algebra of $|X|\times |X|$ matrices with complex entries, and $\cl{D}_{X}$ to denote the subalgebra of diagonal matrices. More generally, for a ${*}$-algebra $\cl{A}$, $M_{X}(\cl{A})$ denotes the $|X|\times |X|$ matrices with entries in $\cl{A}$. For a unital $*$-algebra $\cl{A}$, we say a matrix $u = (u_{i,j}) \in M_{n, m}(\cl{A})$ is unitary if $u^*u =I_{M_{m}(\cl A)}$ and $ uu^* =I_{M_{n}(\cl A)}$. For $x \in X$, let $\delta_{x}$ denote the $x^{\rm th}$-basis element in $\cl{D}_{X}$.
A \textit{positive operator-valued measure (POVM)} is a  family $(A_{i})_{i=1}^{k}$ of positive operators acting on $H$, such that $\sum_{i=1}^{k}A_{i} = I$. If, in addition, $A_{i}$ is a projection for any $i\in[k]$, we call it a \textit{projection-valued measure (PVM)}. 

\subsection{Non-local games and correlation sets}\label{ss_nonlocal} 

Let $X, Y, A, B$ be finite sets. A collection of (conditional) probabilities $p = \{p(a, b|x, y): \; x \in X, y \in Y, a \in A, b \in B\}$ is called a \textit{no-signalling (NS) correlation} if
\begin{gather*}
    \sum\limits_{b \in B}p(a, b|x, y) = \sum\limits_{b\in B}p(a, b|x, y'), \;\;\;\; {\rm and} \;\;\;\;
    \sum\limits_{a \in A}p(a, b|x, y) = \sum\limits_{a \in A}p(a, b|x', y),
\end{gather*}
\noindent for any choice of $x, x' \in X, y, y' \in Y, a \in A, b \in B$.

A no-signalling correlation $p$ is called \textit{quantum commuting} if there exists a Hilbert space $H$, a unit vector $\xi \in H$, and POVM's $(E_{x, a})_{a \in A}$ and $(F_{y, b})_{b \in B}$ acting on $H$, where $E_{x, a}F_{y, b} = F_{y, b}E_{x, a}$ for all $x \in X, y \in Y, a \in A, b \in B$ such that
\begin{gather*}
    p(a, b|x, y) = \langle E_{x, a}F_{y, b}\xi, \xi\rangle, \;\;\;\; x \in X, y \in Y, a \in A, b \in B.
\end{gather*}
We call $p$ \textit{quantum}, if it is quantum commuting and $H$ can be chosen of the form $H = H_{A}\otimes H_{B}$ in such a way that $H_{A}, H_{B}$ are finite-dimensional, with $E_{x, a} = E_{x, a}'\otimes I_{B}$ and $F_{y, b} = I_{A}\otimes F_{y, b}'$ for $x \in X, y \in Y, a \in A, b \in B$. The no-signalling correlation $p$ is \textit{approximately quantum} if it is the limit of quantum correlations, and \textit{local} if $p = \sum_{i=1}^{k}\lambda_{i}p_{i}^{1}\otimes p_{i}^{2}$ as a convex combination, where $p_{i}^{1}(\cdot|x)$ (resp. $p_{i}^{2}(\cdot|y)$) is a probability distribution over $A$ (resp. over $B$) for each choice of $x \in X$ (resp. $y \in Y$), $i = 1, \hdots, k$. We denote the subclasses of local, quantum, approximately quantum, quantum commuting, and no-signalling correlations by $\cl{C}_{\rm loc}, \cl{C}_{\rm q}, \cl{C}_{\rm qa}, \cl{C}_{\rm qc}$, and $\cl{C}_{\rm ns}$, respectively (where we drop the explicit dependence on $X, Y, A$, and $B$ whenever understood). 

A \textit{non-local game over $(X, Y, A, B)$} is a tuple $\cl G = (X, Y, A, B, \lambda)$ where $X, Y, A, B$ are finite sets, and $\lambda: X\times Y\times A\times B\rightarrow \{0, 1\}$. Let $\cl G = (X, Y, A, B, \lambda)$ be a non-local game; for ${\rm t} \in \{\rm loc, q, qa, qc, ns\}$, we say that a correlation $p \in \mathcal{C}_{\rm t}$ is a \textit{perfect ${\rm t}$-strategy} for $\cl G$ if $p(a, b|x, y) = 0$ whenever $\lambda(x, y, a, b) = 0$, for $x \in X, y \in Y, a \in A, b \in B$. We denote the collection of all perfect ${\rm t}$-strategies for $\cl G$ by $\cl{C}_{\rm t}(\cl G)$. 

A non-local game is \textit{synchronous} if $X = Y, A = B$ and $\lambda(x, x, a, b) = \delta_{a,b}$ for $a, b \in A$ and all $x \in X$ and \textit{bisynchronous} if  it is synchronous and $ \lambda(x,y,a,a)= \delta_{x,y}$ for $ x,y\in X$ and all $ a\in A$ (see \cite{paulsen_rahaman}). Intuitively, those are non-local games for which our players must respond with the same answer, given they were sent the same question. Note that if $p \in \mathcal{C}_{\rm t}$ is a perfect strategy for a synchronous game, then $p(a, b|x, x) = 0$ whenever $a \neq b$, for all $x \in X$ and $a, b \in A$. 
We briefly discuss this characterization in the remainder of this subsection.

Let $\cl G = (X, A, \lambda)$ be a synchronous game. The \textit{game algebra associated to $\cl G$} (see \cite{hmps, kps, op}) is the  $*$-algebra $\mathcal{A}(\cl G)$ generated by elements $\{e_{x, a}: \; x \in X, \; a \in A\}$ subject to the following conditions:
\begin{itemize}
    \item[(i)] $e_{x, a}^{2} = e_{x, a}^{*} = e_{x, a}$ for all $x \in X, a \in A$;
    \item[(ii)] $\sum_{a \in A}e_{x, a} = 1$, for each $x \in X$;
    \item[(iii)] $e_{x, a}e_{y, b} = 0$ whenever $\lambda(x, y, a, b) = 0$, for $x, y \in X, a, b \in A$. 
\end{itemize}
Recall the following;
\begin{theorem}[\cite{hmps, kps}] \label{th:synchro_charact}
Let $\cl G = (X, A, \lambda)$ be a synchronous game. Then 
\begin{itemize}
    \item[(i)] there exists a perfect ${\rm qc}$-strategy for $\cl{G}$ if and only if there exists a unital $*$-homomorphism $\cl{A}(\cl{G})\rightarrow A$, where $A$ is a unital ${\rm C}^{*}$-algebra endowed with a faithful tracial state;
    \item[(ii)] there exists a perfect ${\rm qa}$-strategy for $\cl{G}$ if and only if there exists a unital $*$-homomorphism $\cl{A}(\cl{G})\rightarrow \cl{R}^{\cl{U}}$, where $\cl{U}$ is a free ultrafilter along $\bb{N}$ and $\cl{R}$ is the hyperfinite ${\rm II}_{1}$ factor;
    \item[(iii)] there exists a perfect ${\rm q}$-strategy for $\cl{G}$ if and only if there exists a unital $*$-homomorphism $\cl{A}(\cl{G})\rightarrow \cl{B}(H)$ for some non-zero, finite-dimensional Hilbert space $H$;
    \item[(iv)] there exists a perfect ${\rm loc}$-strategy for $\cl{G}$ if and only if there exists a unital $*$-homomorphism $\cl{A}(\cl{G})\rightarrow \bb{C}$.
\end{itemize}
\end{theorem}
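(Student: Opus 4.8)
The plan is to establish both directions of each equivalence simultaneously by exploiting the \emph{tracial representation} of synchronous correlations. The central structural fact---which I would isolate first---is that a synchronous correlation $p$ lies in $\cl{C}_{\rm qc}$ if and only if there is a tracial von Neumann algebra $(M,\tau)$ together with, for each $x\in X$, a PVM $(e_{x,a})_{a\in A}$ of projections in $M$ such that $p(a,b|x,y)=\tau(e_{x,a}e_{y,b})$; moreover $p\in\cl{C}_{\rm qa}$ (resp.\ $\cl{C}_{\rm q}$, $\cl{C}_{\rm loc}$) precisely when $M$ can be taken to be $\cl{R}^{\cl{U}}$ (resp.\ finite-dimensional, one-dimensional). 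Granting this, the proof of each item reduces to a bookkeeping argument linking the defining relations (i)--(iii) of $\cl{A}(\cl{G})$ to the perfectness condition $p(a,b|x,y)=0$ whenever $\lambda(x,y,a,b)=0$.

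For the forward direction of (i), I would start from a perfect ${\rm qc}$-strategy and pass to its tracial representation $p(a,b|x,y)=\tau(e_{x,a}e_{y,b})$. The projections and their PVM property immediately verify relations (i) and (ii). For relation (iii), whenever $\lambda(x,y,a,b)=0$ we have $\tau(e_{x,a}e_{y,b})=0$; setting $f:=e_{y,b}e_{x,a}e_{y,b}=(e_{x,a}e_{y,b})^{*}(e_{x,a}e_{y,b})\ge 0$ and using the trace property together with $e_{y,b}^{2}=e_{y,b}$ gives $\tau(f)=\tau(e_{x,a}e_{y,b})=0$, so faithfulness forces $f=0$ and hence $e_{x,a}e_{y,b}=0$. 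Thus $e_{x,a}\mapsto e_{x,a}$ extends to a unital $*$-homomorphism $\cl{A}(\cl{G})\to M$, and $M$ carries the faithful trace $\tau$. Conversely, given a unital $*$-homomorphism $\pi:\cl{A}(\cl{G})\to A$ into a unital C$^{*}$-algebra with faithful tracial state $\tau$, I would perform the GNS construction for $\tau$ and let $E_{x,a}$, $F_{y,b}$ be the left and right multiplication operators by $\pi(e_{x,a})$, $\pi(e_{y,b})$ on $L^{2}(A,\tau)$ with cyclic vector $\widehat{1}$. These two families commute, form POVMs, and yield $p(a,b|x,y)=\tau(\pi(e_{x,a})\pi(e_{y,b}))$; relation (iii) makes this a perfect synchronous ${\rm qc}$-strategy.

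Items (ii)--(iv) follow the identical template, replacing only the ambient tracial algebra. For (ii) the ${\rm qa}$-correlations are exactly those whose tracial representation factors through the ultrapower $\cl{R}^{\cl{U}}$, so a perfect ${\rm qa}$-strategy yields a $*$-homomorphism into $\cl{R}^{\cl{U}}$ and conversely, using that the normalized trace on $\cl{R}^{\cl{U}}$ is faithful. For (iii) one notes that a synchronous quantum correlation admits a representation on a finite-dimensional $H$ with $\tau$ the normalized matrix trace (faithful and tracial on $\cl{B}(H)$), so the argument of (i) restricts to the finite-dimensional setting. For (iv), a unital $*$-homomorphism $\cl{A}(\cl{G})\to\bb{C}$ sends each PVM $(e_{x,a})_{a}$ to a $0/1$-vector, i.e.\ a function $f:X\to A$ determined by $e_{x,f(x)}\mapsto 1$; relation (iii) then reads $\lambda(x,y,f(x),f(y))=1$ for all $x,y$, which is exactly a deterministic---hence local---perfect strategy, and the converse reverses this reading.

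The main obstacle is the structural theorem invoked at the outset: the identification of synchronous correlations in each class ${\rm t}$ with tracial representations over the corresponding algebra, and in particular the reduction to a \emph{faithful} trace (achieved by quotienting $M$ by the kernel ideal $\{m:\tau(m^{*}m)=0\}$). This is the content of the cited works \cite{hmps, kps}; once it is in place, the equivalences amount to a direct translation between the algebraic relations defining $\cl{A}(\cl{G})$ and the vanishing of $p$ on the forbidden tuples.
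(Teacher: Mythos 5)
The paper does not prove this statement---it is recalled verbatim from \cite{hmps, kps}---and your argument is a correct reconstruction of the standard proof in those references. The tracial characterization of synchronous ${\rm t}$-correlations, the faithfulness argument forcing $e_{x,a}e_{y,b}=0$ on forbidden tuples (after quotienting by the trace kernel), and the GNS/left--right multiplication construction for the converse are exactly the ingredients used there, so your proposal matches the intended proof in both structure and detail.
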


\noindent Note that it is not guaranteed that $\mathcal{A}(\cl G) \neq \{0\}$ for any synchronous game $\cl G$. As previously indicated, when $\cl{A}(\cl G)$ is \textit{non-trivial} this $*$-algebra encodes the perfect winning strategies for a synchronous game $\cl G$; see \cite[Theorem 3.2]{hmps} and \cite[Theorem 3.6]{kps}. The representation theory of $\cl{A}(\cl{G})$ is delicate, and depends on the choice of synchronous game $\cl{G}$: there exist synchronous games $\cl G$ for which $\cl{A}(\cl G) \neq \{0\}$, and may be represented concretely $\cl{A}(\cl G)\rightarrow \cl{B}(H)$ for some Hilbert space $H$, but there exist no tracial states on $\cl{A}(\cl G)$ (implying no winning ${\rm qc}$-strategies--- see \cite{ps}). Similarly, there exist synchronous games $\cl G$ for which $\cl{A}(\cl G) \neq \{0\}$, but cannot be concretely represented on $\cl{B}(H)$ for any choice of $H$; this indicates a severe lack of perfect strategies of various types for $\cl G$ (see \cite{hmps}).


\subsection{Compact quantum groups and Hopf algebras}\label{ss_cqg}
In what follows, $\otimes$ denotes the minimal tensor product of ${\rm C}^{*}$-algebras unless otherwise specified. We refer the reader to \cite{timmermann} for a more detailed exposition.
\smallskip

A \textit{compact quantum group (CQG)} $\bb{G}$ is a pair $(C(\bb{G}), \Delta)$, where $C(\bb{G})$ is a unital ${\rm C}^{*}$-algebra and $\Delta: C(\bb{G})\rightarrow C(\bb{G})\otimes C(\bb{G})$ is a unital $*$-homomorphism such that:
\begin{itemize}
  \item[(i)] $(\mathrm{id}\otimes \Delta)\Delta = (\Delta\otimes \mathrm{id})\Delta$;
  \item[(ii)] we have the following density conditions:
  \[
    \overline{\mathrm{span}}\bigl\{ \Delta(C(\bb{G}))(1\otimes C(\bb{G})) \bigr\}
    \;=\; C(\bb{G})\otimes C(\bb{G})
    \;=\;
    \overline{\mathrm{span}}\bigl\{ (C(\bb{G})\otimes 1)\Delta(C(\bb{G})) \bigr\}.
  \]
\end{itemize}

Every compact quantum group admits a unique state $\varphi_{\bb{G}}: C(\bb{G})\rightarrow \bb{C}$, called the \textit{Haar state}, such that
\[
(\mathrm{id}\otimes \varphi_{\bb{G}})\Delta(a) = \varphi_{\bb{G}}(a)\,1
\;=\;
(\varphi_{\bb{G}}\otimes \mathrm{id})\Delta(a)
\qquad (a \in C(\bb{G})).
\]
If the Haar state is tracial, then $\bb{G}$ is said to be of \textit{Kac type}.

Given two compact quantum groups $\bb{G} = (C(\bb{G}), \Delta_{\bb{G}})$ and $\bb{H} = (C(\bb{H}), \Delta_{\bb{H}})$, a unital $*$-homomorphism $\varphi: C(\bb{G})\rightarrow C(\bb{H})$ is a \emph{morphism of compact quantum groups} if
\[
(\varphi\otimes \varphi)\circ \Delta_{\bb{G}} \;=\; \Delta_{\bb{H}}\circ \varphi.
\]
If $\varphi$ is surjective, we write $\bb H \subseteq \bb G$ and call $\bb H$ a subgroup of $\bb G$; if $\varphi$ is injective, we call $\bb H$ a quotient of $\bb G$; and if $\varphi$ is bijective, then $\bb{G}$ and $\bb{H}$ are \emph{isomorphic as compact quantum groups}.

\smallskip

A \emph{compact matrix quantum group} is a triple $({A}, \Delta, u)$ where $A$ is a unital ${\rm C}^{*}$-algebra, $\Delta: {A}\rightarrow {A}\otimes {A}$ is a unital $*$-homomorphism, and $u=(u_{ij})\in M_{n}({A})$ is unitary such that:
\begin{itemize}
  \item[(i)] $\Delta(u_{ij}) = \sum_{k=1}^{n}u_{ik}\otimes u_{kj}$ for all $1\le i,j\le n$;
  \item[(ii)] $\bar u := (u_{ij}^{*})_{i,j}$ is invertible in $M_{n}({A})$;
  \item[(iii)] the entries $\{u_{ij}\}_{1\le i,j\le n}$ generate ${A}$ as a ${\rm C}^{*}$-algebra.
\end{itemize}
The unitary matrix $u$ is called the \emph{fundamental representation}.

\smallskip

Given a compact matrix quantum group $(A, \Delta, u)$, the $*$-algebra generated by the matrix coefficients $\{u_{ij}\}$ is denoted by $\mathcal{O}(\bb{G})$. 

For compact (matrix) quantum groups $\bb{G}=(C(\bb{G}),\Delta_{\bb G},u)$ and $\bb{H}=(C(\bb{H}),\Delta_{\bb H},v)$, the pair
\[
\bb{G}\ast \bb{H} \;:=\; \bigl(C(\bb{G})\ast C(\bb{H}),\, \Delta_{\bb G}\ast \Delta_{\bb H},\, u\oplus v\bigr),
\]
where the free product is taken in the category of unital ${\rm C}^{*}$-algebras and $u,v$ are viewed inside $\,C(\bb G)\ast C(\bb H)\,$ via the canonical inclusions, is again a compact matrix quantum group \cite{wang}.

A {\it magic unitary} or {\it quantum permutation matrix} is a unitary matrix $u=(u_{i,j})\in M_{n}(\cl A)$ such that its entries are selfadjoint idempotents and the rows and columns are orthogonal; that is, $u_{i,j}= u_{i,j}^* = u_{i,j}^2$, $\sum_{i=1}^{n}u_{i,j}=\sum_{j=1}^{n}u_{i,j}=1$ for all $ i,j$, and $ u_{i,j}u_{i,j'}=0$ if $ j\neq j'$ and $u_{i,j}u_{i',j}=0$ if $ i \neq i'$. In the case when $\cl{A}$ is a C$^{*}$-algebra, if $u \in M_{n, m}(\cl{A})$ is a magic unitary then orthogonality of the rows and columns is automatic, and does not need to be assumed. An example of particular importance here is the \emph{quantum symmetric group} $S_{X}^{+}$ of a finite set $X$: it is the compact quantum group whose fundamental representation $u=(u_{xx'})\in M_{X}( C(S_{X}^{+}))$ is a magic unitary (quantum permutation). 

\smallskip

A  \emph{Hopf $*$-algebra} is a quadruple $(\cl A,\Delta,\varepsilon,S)$ consisting of a unital $*$-algebra $\cl A$, a unital $*$-homomorphism $\Delta:\cl A\to \cl A\otimes \cl A$  such that $(\mathrm{id}\otimes \Delta)\Delta=(\Delta\otimes \mathrm{id})\Delta$ and  morphisms  $\varepsilon:\cl A\to\bb C$ (the \emph{counit}) and $S:\cl A\to \cl A$ (the \emph{antipode}) such that, for all $a\in \cl A$,
\begin{itemize}
  \item[(i)] $(\varepsilon\otimes \mathrm{id})\Delta(a)=(\mathrm{id}\otimes \varepsilon)\Delta(a)=a$;
  \item[(ii)] $m(S\otimes \mathrm{id})\Delta(a)=m(\mathrm{id}\otimes S)\Delta(a)=\varepsilon(a)\,1$,
\end{itemize}
where $m:\cl A\otimes \cl A\to \cl A$ is the multiplication map.

\smallskip

\begin{remark}\label{r_hopf_alg}\rm
By \cite{timmermann}, if $\bb{G}$ is a compact matrix quantum group, then there exist a unital $*$-homomorphism $\varepsilon:\mathcal{O}(\bb G)\to\bb C$ and a unital anti-homomorphism $S:\mathcal{O}(\bb G)\to\mathcal{O}(\bb G)$ such that
\[
\varepsilon(u_{ij})=\delta_{ij},\qquad S(u_{ij})=u_{ji}^{*}\qquad (1\le i,j\le n),
\]
making $(\mathcal{O}(\bb G),\Delta,\varepsilon,S)$ a Hopf $*$-algebra.
\end{remark}

Let $F\in {\rm GL}_n(\bb C)$ be invertible. The \emph{universal unitary quantum group} $A^{0}_{u}(F)$ \cite{vdw} is the unital $*$-algebra generated by the entries $u_{ij}$ of an $n\times n$ matrix $u=(u_{ij})$ subject to the relations that both $u$ and
\[
F\,\bar u\,F^{-1},\qquad \text{with }\ \bar u:=(u_{ij}^{*}),
\]
are unitary. It becomes a compact quantum group with coproduct $\Delta(u_{ij})=\sum_{k=1}^{n}u_{ik}\otimes u_{kj}$, counit $\varepsilon(u_{ij})=\delta_{ij}$, and antipode $S(u_{ij})=u_{ji}^{*}$. For $F=I_n$ we obtain the \emph{universal unitary quantum group} $A_u(n)$.

Let $G$ be a group acting on a set $X$; the action is \textit{free} if $g\cdot x = x$ implies $g = e_{G}$, and \textit{transitive} if for any $x, y \in X$ there exists a $g \in G$ such that $g \cdot x = y$. The space $X$ is a \emph{torsor} (principal homogeneous space) for $G$ if the action is free and transitive. Galois extensions (introduced in \cite{cs}) are its quantum analogues in the setting of compact quantum groups/Hopf $*$-algebras; see also \cite{bichon99, ulbrich}. Bi-Galois extensions (comodules with commuting left/right coactions) will play a role below.

\smallskip

Let $\cl A$ be a Hopf $*$-algebra. A unital $*$-algebra $\cl Z$ is:
\begin{itemize}
  \item[(i)] a \emph{left $\cl A$-$*$-comodule algebra} if there exists a unital $*$-homomorphism $\delta:\cl Z\to \cl A\otimes \cl Z$ with
  \[
  (\mathrm{id}\otimes \delta)\circ \delta=(\Delta\otimes \mathrm{id})\circ \delta,
  \qquad
  (\varepsilon\otimes \mathrm{id})\circ \delta=\mathrm{id}_{\cl Z};
  \]
  \item[(ii)] a \emph{right $\cl A$-$*$-comodule algebra} if there exists a unital $*$-homomorphism $\gamma:\cl Z\to \cl Z\otimes \cl A$ with
  \[
  (\gamma\otimes \mathrm{id})\circ \gamma=(\mathrm{id}\otimes \Delta)\circ \gamma,
  \qquad
  (\mathrm{id}\otimes \varepsilon)\circ \gamma=\mathrm{id}_{\cl Z};
  \]
\end{itemize}

\smallskip

Let $\cl A$ and $\cl B$ be Hopf $*$-algebras. A unital $*$-algebra $\cl Z$ is an \emph{$\cl A$-$\cl B$ bi-Galois extension} if:
\begin{enumerate}
  \item[(i)] $\cl Z$ is a left $\cl A$-$*$-comodule algebra via $\delta:\cl Z\to \cl A\otimes \cl Z$;
  \item[(ii)] $\cl Z$ is a right $\cl B$-$*$-comodule algebra via $\gamma:\cl Z\to \cl Z\otimes \cl B$;
  \item[(iii)] the coactions commute: $(\delta\otimes \mathrm{id})\circ \gamma=(\mathrm{id}\otimes \gamma)\circ \delta$;
  \item[(iv)] the canonical maps
  \begin{align*}
  \kappa_\delta:\cl Z\otimes \cl Z\to \cl A\otimes \cl Z,\quad z\otimes z'\mapsto \delta(z)(1\otimes z')\\
  \kappa_\gamma:\cl Z\otimes \cl Z\to \cl Z\otimes \cl B,\quad z\otimes z'\mapsto (z\otimes 1)\gamma(z'),
  \end{align*}
  are bijections.
\end{enumerate}

In the sequel we will be interested in bi-Galois extensions that can be represented in some $ {\rm C}^*$-algebra equipped with a trace. The existence of such representations will involve a criterion that guarantees the existence of invariant states on bi-Galois extensions.
For a linear functional $\tau:\cl Z\to \bb C$ we say:
\begin{itemize}
  \item[(i)] $\tau$ is a \emph{state} if $\tau(z^{*}z)\ge 0$ for all $z$ and $\tau(1)=1$;
  \item[(ii)] $\tau$ is a \emph{trace} if it is a state and $\tau(z_1 z_2)=\tau(z_2 z_1)$ for all $z_1,z_2$;
  \item[(iii)] $\tau$ is \emph{left invariant} if $(\mathrm{id}\otimes \tau)\circ \delta(z)=\tau(z)\,1_{\cl A}$, for all $z$;
  \item[(iv)] $\tau$ is \emph{right invariant} if $(\tau\otimes \mathrm{id})\circ \gamma=\tau(z)\,1_{\cl{B}}$, for all $z$;
  \item[(v)] $\tau$ is \emph{bi-invariant} if it is both left and right invariant;
  \item[(vi)] $\tau$ is \emph{faithful} if $\tau(z^{*}z)=0\Rightarrow z=0$.
\end{itemize}

\smallskip

Let $F\in {\rm GL}_n(\bb C)$ and $G\in {\rm GL}_m(\bb C)$ be invertible. The \emph{unital $*$-algebra} $A_u^0(F,G)$ \cite{bichon99} is the unital $*$-algebra generated by the entries $u_{ij}$ of an $n\times m$ matrix $u=(u_{ij})$ such that both $u$ and $F\,\bar u\,G^{-1}$ are unitary. For $ F= I_n$ and $ G= I_m$ we obtain the unital $*$-algebra $A_u^0(n,m)$. If, in addition, $n = m$ with $F = I_{n} = G$ we denote $A_{u}^{0}(n) := A_{u}^{0}(n, n)$.

\subsection{Quantum automorphisms of hypergraphs}

A \textit{hypergraph} is a pair $ \Lambda=(V,E)$, where $ V$ is a finite set and 
$E$ is a non-empty set of subsets of $V$.  The elements of $V$ are called vertices, and the elements of $E$ hyperedges (or simply edges). A hypergraph is called \textit{full} if  $ \cup_{e \in E}e = V$. The hypergraphs considered in this work will all be assumed full.

Let $\Lambda=(V,E)$ be a hypergraph. Its incidence matrix $A_{\Lambda}$ is the $V\times E$ matrix
\[
(A_{\Lambda})_{v,e} :=
\begin{cases}
1, & v\in e,\\
0, & \text{otherwise.}
\end{cases}
\]

The quantum automorphism group of a hypergraph was introduced by Faross in \cite{frs}. 

\smallskip

The \emph{quantum automorphism group} of $\Lambda$, denoted $\mathrm{Aut}^+(\Lambda)$, is the compact matrix quantum group $(C(\mathrm{Aut}^{+}(\Lambda)),\Delta,\,u_V\oplus u_E)$ whose ${\rm C}^*$-algebra is the universal unital ${\rm C}^*$-algebra generated by
\begin{itemize}
  \item[(i)] a quantum permutation matrix $u_V=(u_{v,v'})_{v,v'\in V}$ on vertices,
  \item[(ii)] a quantum permutation matrix $u_E=(u_{e,e'})_{e,e'\in E}$ on edges,
\end{itemize}
subject to the intertwining relation $A_\Lambda\,u_E=u_V\,A_\Lambda$.

For the compact matrix quantum group $\mathrm{Aut}^{+}(\Lambda)$, we write $\mathcal{O}(\mathrm{Aut}^{+}(\Lambda))$ or $ \cl O(\Lambda)$ for simplicity for the associated Hopf $*$-algebra.


\section{The hypergraph isomorphism game}\label{s_hig}

Recall that an \textit{isomorphism} from hypergraph $\Lambda_{1}$ to $\Lambda_{2}$ is a bijective function $f: V_{1}\rightarrow V_{2}$ such that $f^{-1}(e_{2})$ is an edge in $E_{1}$ if and only if $e_{2}$ is an edge of $E_{2}$. In this case, as $|V_{1}| = |V_{2}|$ we often identify them and set $V := V_{1} = V_{2}$. The existence of such a bijection $f: V_{1}\rightarrow V_{2}$ is equivalent to the existence of a bijection $g: E_{1}\rightarrow E_{2}$ for which $v_{1} \in e_{1}$ if and only if $f(v_{1}) \in g(e_{1})$, for all $v_{1} \in V_{1}$ and $e_{1} \in E_{1}$. When $\Lambda_{1}$ and $\Lambda_{2}$ are isomorphic (via some pair $(f, g)$), we write $\Lambda_{1} \cong \Lambda_{2}$.

\begin{remark} \label{r:classintertwin} \rm  
Let $\Lambda_{i} = (V_{i}, E_{i}), i = 1, 2$ be hypergraphs. Then it is easily verified that there is a bijective correspondence between the set of isomorphisms betweem $\Lambda_1$ and $ \Lambda_2$ and the set of all pairs $(P_{V}, P_{E})$ where $P_{V}$ (respectively, $P_{E}$) is a $V_1\times V_2$ permutation matrix (respectively, $E_1\times E_2$ permutation matrix) such that 
\begin{gather*}\label{permutation_rel}
    A_{\Lambda_{1}}P_{E} = P_{V}A_{\Lambda_{2}}.
\end{gather*}
Note that, for finite sets $X, Y$, an $X\times Y$ permutation matrix is a matrix in $M_{X, Y}(\{0, 1\})$ such that summing over the entries of each row and each column yields one. 
\end{remark}

\subsection{Game description}
Given two hypergraphs $\Lambda_1=(V_1,E_1)$ and $\Lambda_2=(V_2,E_2)$,
set
\[
V := V_1 \,\sqcup\, V_2, 
\qquad 
E := E_1 \,\sqcup\, E_2,
\]
and let the question and answer sets coincide: $X=A:=V\sqcup E$.

In each round, the verifier chooses $(x,y)\in X\times X$ and sends $x$ to Alice,
$y$ to Bob. They respond with $a,b\in A$. The winning rules for the hypergraph isomorphism game $\mathrm{HypIso}(\Lambda_1,\Lambda_2)$ are:
\begin{enumerate}
  \item[(i)] \textit{Type/swap.} The answers from each player must be of the same type as their questions, but from the opposite hypergraph. That is, if $x\in V_i$ then $a\in V_{3-i}$; if $x\in E_i$ then $a\in E_{3-i}$,
  and similarly for Bob for $i = 1, 2$. 

  \item[(ii)] \textit{Incidence preservation.}  The pairs $(x,a)$ and $ (y,b)$ satisfy the same incidence relations.
  If $x\in V_i$ and $y\in E_i$, then
  \[
     x\in y \;\Longleftrightarrow\; a\in b,
  \]
  where $a\in V_{3-i}$ and $b\in E_{3-i}$, $i = 1, 2$.

  \item[(iii)] \textit{Vertex--vertex relation preservation.}
  For $i = 1, 2$ and $u,u'\in V_i$, define
  \[
    \mathrm{rel}_V^i(u,u')\in\{\text{equal},\ \text{adjacent},\ \text{distinct non-adjacent}\},
  \]
  where ``adjacent'' means $\exists\,e\in E_i:\ u,u'\in e$.
  Whenever both questions are vertices (possibly from different hypergraphs), collect the two
  $V_1$-vertices among $\{x,y,a,b\}$ (call them $v_A,v_B$) and the two $V_2$-vertices
  among $\{x,y,a,b\}$ (call them $w_A,w_B$), and require
  \[
     \mathrm{rel}_V^1(v_A,v_B) \;=\; \mathrm{rel}_V^2(w_A,w_B).
  \]

  \item[(iv)] \textit{Edge--edge relation preservation.}
  For $i\in\{1,2\}$ and $e,e'\in E_i$, define
  \[
    \mathrm{rel}_E^i(e,e')\in\{\text{equal},\ \text{intersecting},\ \text{disjoint}\},
  \]
  where ``intersecting'' means $e\cap e'\neq\varnothing$.
  Whenever both questions are edges (possibly from different hypergraphs), collect the two
  $E_1$-edges among $\{x,y,a,b\}$ (call them $e_A,e_B$) and the two $E_2$-edges
  among $\{x,y,a,b\}$ (call them $f_A,f_B$), and require
  \[
     \mathrm{rel}_E^1(e_A,e_B) \;=\; \mathrm{rel}_E^2(f_A,f_B).
  \]
\end{enumerate}

\noindent
If the verifier asks the same question to both players, then by (i)–(iii)/(iv) the only accepted response requires Alice and Bob to output the same answer (on the opposite side); hence the game is synchronous and in fact bisynchronous.

\medskip

Let $\Lambda_1,\Lambda_2$ be hypergraphs. We say that $\mathrm{HypIso}(\Lambda_1,\Lambda_2)$
\emph{has a perfect $A^*$-strategy} if the $*$-algebra $\cl A(\mathrm{HypIso}(\Lambda_1,\Lambda_2))\neq 0$; and it
\emph{has a perfect $\mathrm{C}^*$-strategy} if
$\cl A(\mathrm{HypIso}(\Lambda_1,\Lambda_2))$ admits a unital $*$-representation into
$\cl B(H)$ for some Hilbert space $H$.

\begin{definition}\rm
Given two hypergraphs $\Lambda_1$ and $\Lambda_2$, we write
\[
\Lambda_{1} \cong_{\mathrm{loc}} \Lambda_{2},\quad
\Lambda_{1} \cong_{\mathrm{q}} \Lambda_{2},\quad
\Lambda_{1} \cong_{\mathrm{qa}} \Lambda_{2},\quad
\Lambda_{1} \cong_{\mathrm{qc}} \Lambda_{2}, \quad
\Lambda_1 \cong_{C^*} \Lambda_2, \quad \Lambda_1 \cong_{A^*} \Lambda_2,
\]
to denote that the $\operatorname{HypIso}(\Lambda_1,\Lambda_2)$ admits a perfect
\emph{local (classical)}, \emph{quantum}, \emph{quantum approximate}, \emph{quantum commuting}
strategy, \emph{has a perfect $\mathrm{C}^*$-strategy}, or \emph{has a perfect $A^*$-strategy} respectively
\end{definition}


\subsection{Presentation of the game algebra}
In this subsection we will show that the game algebra is the universal object that encodes quantum isomorphisms of hypergraphs.

The following is an analogue of \cite[Proposition 2.1.3]{schmidt_one}, or \cite[Proposition 2.29]{frs} in the case of two hypergraphs; we include a proof for the convenience of the reader. 

\begin{lemma}\label{l_inclusion_rel}
Let $\Lambda_1 = (V_1, E_1)$ and $\Lambda_2 = (V_2, E_2)$ be hypergraphs with incidence matrices
$A_{\Lambda_1},A_{\Lambda_2}$.
Let $P_{V}=(p_{v_1,v_2})_{v_1\in V_1,\ v_2\in V_2}$ and $P_{E}=(p_{e_1,e_2})_{e_1\in E_1,\ e_2\in E_2}$ be magic unitaries.
Then the following are equivalent:
\begin{enumerate}
    \item[(i)] $A_{\Lambda_1} P_{E} \;=\; P_{V} A_{\Lambda_2}$;
    \item[(ii)] $ \sum_{\substack{e_1 \in E_1\\ v_1 \in e_1}} p_{e_1,e_2}
  =
  \sum_{\substack{v_2 \in V_2\\ v_2 \in e_2}} p_{v_1,v_2}
  \;\; {for \; all}\; v_1 \in V_1,\ e_2 \in E_2$;
 \item[(iii)]  $p_{v_1,v_2}\,p_{e_1,e_2} \;=\; p_{e_1,e_2}\,p_{v_1,v_2} = 0$ whenever 
 $ v_1\notin e_1$ and  $ v_2\in e_2 $ or $ v_1\in e_1$ and $  v_2\notin e_2$.
\end{enumerate}
\end{lemma}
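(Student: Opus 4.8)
The plan is to prove (i)$\Leftrightarrow$(ii) by unwinding the two matrix products entrywise, and then to prove (ii)$\Leftrightarrow$(iii) by exploiting that the entries of the magic unitaries $P_V$ and $P_E$ are self-adjoint projections whose rows and columns are orthogonal families summing to $1$. Throughout, I abbreviate $L_{v_1,e_2}:=\sum_{e_1\ni v_1}p_{e_1,e_2}$ and $R_{v_1,e_2}:=\sum_{v_2\in e_2}p_{v_1,v_2}$. Since $(A_{\Lambda_i})_{v,e}$ is the indicator of $v\in e$, the $(v_1,e_2)$-entry of $A_{\Lambda_1}P_E$ is exactly $L_{v_1,e_2}$ and that of $P_V A_{\Lambda_2}$ is exactly $R_{v_1,e_2}$; hence the matrix identity (i) is literally the family of scalar identities $L_{v_1,e_2}=R_{v_1,e_2}$, which is (ii). This settles (i)$\Leftrightarrow$(ii) with no real work.

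For (iii)$\Rightarrow$(ii), I fix $v_1,e_2$ and insert resolutions of the identity coming from the magic unitaries. For each $v_2\in e_2$, the column relation $\sum_{e_1\in E_1}p_{e_1,e_2}=1$ of $P_E$ gives $p_{v_1,v_2}=\sum_{e_1}p_{v_1,v_2}p_{e_1,e_2}$, and (iii) kills every term with $v_1\notin e_1$; summing over $v_2\in e_2$ and factoring out the common right factor yields $R_{v_1,e_2}=R_{v_1,e_2}L_{v_1,e_2}$. The mirror computation, now inserting the row relation $\sum_{v_2\in V_2}p_{v_1,v_2}=1$ of $P_V$ into each $p_{e_1,e_2}$ with $e_1\ni v_1$, produces $L_{v_1,e_2}=R_{v_1,e_2}L_{v_1,e_2}$. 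Comparing the two identities gives $L_{v_1,e_2}=R_{v_1,e_2}$, i.e.\ (ii).

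For (ii)$\Rightarrow$(iii) I assume $L_{v_1,e_2}=R_{v_1,e_2}$ and handle the two parts of (iii) symmetrically. In the case $v_1\in e_1$, $v_2\notin e_2$, I multiply the identity on the left by $p_{v_1,v_2}$ and use the row-orthogonality $p_{v_1,v_2}p_{v_1,v_2'}=\delta_{v_2,v_2'}p_{v_1,v_2}$ of $P_V$; since $v_2\notin e_2$ the right-hand side collapses to $0$, leaving $p_{v_1,v_2}P=0$ with $P:=\sum_{e_1\ni v_1}p_{e_1,e_2}$. In the case $v_1\notin e_1$, $v_2\in e_2$, I instead multiply on the right by $p_{e_1,e_2}$ and use the column-orthogonality of $P_E$; since $v_1\notin e_1$ the left-hand side collapses to $0$, leaving $Qp_{e_1,e_2}=0$ with $Q:=\sum_{v_2\in e_2}p_{v_1,v_2}$.

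The one genuine obstacle is upgrading these relations about \emph{sums} of projections to the vanishing of the \emph{individual} products, and this is exactly where the magic-unitary structure is indispensable. Because the summands of $P$ (same column of $P_E$) and of $Q$ (same row of $P_V$) are pairwise orthogonal projections, $P$ and $Q$ are themselves projections that absorb their summands: $Pp_{e_1,e_2}=p_{e_1,e_2}$ for $e_1\ni v_1$, and $p_{v_1,v_2}Q=p_{v_1,v_2}$ for $v_2\in e_2$. Therefore $p_{v_1,v_2}p_{e_1,e_2}=p_{v_1,v_2}(Pp_{e_1,e_2})=(p_{v_1,v_2}P)p_{e_1,e_2}=0$ in the first case, and $p_{v_1,v_2}p_{e_1,e_2}=(p_{v_1,v_2}Q)p_{e_1,e_2}=p_{v_1,v_2}(Qp_{e_1,e_2})=0$ in the second. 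Finally, since all entries are self-adjoint, taking adjoints converts each $p_{v_1,v_2}p_{e_1,e_2}=0$ into the commuted relation $p_{e_1,e_2}p_{v_1,v_2}=0$, which completes (iii) and hence the equivalence.
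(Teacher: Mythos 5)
Your proof is correct and follows essentially the same route as the paper's: the entrywise identification gives (i)$\Leftrightarrow$(ii), and the remaining implications are obtained by inserting the row/column partitions of unity of the magic unitaries and using orthogonality, absorption, and self-adjointness, exactly as in the paper. The only cosmetic difference is in (iii)$\Rightarrow$(ii), where you derive the two absorption identities $R_{v_1,e_2}=R_{v_1,e_2}L_{v_1,e_2}$ and $L_{v_1,e_2}=R_{v_1,e_2}L_{v_1,e_2}$ and compare them, whereas the paper shows that the difference $L_{v_1,e_2}-R_{v_1,e_2}$ is annihilated by every $p_{v_1,v_2}$ and then sums over $v_2$; the ingredients are identical.
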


\begin{proof}

[(i) $\Leftrightarrow$ (ii)] is straightforward equating the entries.

 [(ii) $\Rightarrow$ (iii)]  For all $v_1\in V_1$, $e_2\in E_2$,
\[
\sum_{\substack{e_1\in E_1\\ v_1\in e_1}} p_{e_1,e_2}
\;=\;
\sum_{\substack{v_2\in V_2\\ v_2\in e_2}} p_{v_1,v_2}.
\]
If $v_1\notin e_1$ and $v_2\in e_2$, then
\[
p_{v_1,v_2}\,p_{e_1,e_2}
= p_{v_1,v_2}\Big(\sum_{\substack{w_2\in V_2\\ w_2\in e_2}} p_{v_1,w_2}\Big)p_{e_1,e_2}
= p_{v_1,v_2}\Big(\sum_{\substack{e_1'\in E_1\\ v_1\in e_1'}} p_{e_1',e_2}\Big)p_{e_1,e_2}
= 0,
\]
since $e_1\notin\{e_1':v_1\in e_1'\}$ and column orthogonality gives $p_{e_1',e_2}p_{e_1,e_2}=0$ for $e_1'\neq e_1$.
Taking adjoints yields $p_{e_1,e_2}p_{v_1,v_2}=0$. The case $v_1\in e_1$, $v_2\notin e_2$ is analogous.

[(iii) $\Rightarrow$ (ii)]  Fix $v_1\in V_1$, $e_2\in E_2$ and set
\[
D_{v_1,e_2}:=\sum_{\substack{e_1\in E_1\\ v_1\in e_1}} p_{e_1,e_2}
\;-\;
\sum_{\substack{v_2\in V_2\\ v_2\in e_2}} p_{v_1,v_2}.
\]
We claim $D_{v_1,e_2}=0$. For any $v_2\in V_2$,
\[
p_{v_1,v_2}\,D_{v_1,e_2}
= \sum_{\substack{e_1\in E_1\\ v_1\in e_1}} p_{v_1,v_2}\,p_{e_1,e_2}
 - \sum_{\substack{w_2\in V_2\\ w_2\in e_2}} p_{v_1,v_2}\,p_{v_1,w_2}.
\]
If $v_2\notin e_2$, the second sum vanishes by row orthogonality, while each term
$p_{v_1,v_2} p_{e_1,e_2}$ in the first sum vanishes by the hypothesis (since $v_1\in e_1$ and $v_2\notin e_2$).
Hence $p_{v_1,v_2}D_{v_1,e_2}=0$.

If $v_2\in e_2$, then by the hypothesis $p_{v_1,v_2} p_{e_1,e_2}=0$ for all $e_1$ with $v_1\notin e_1$.
Thus
\[
p_{v_1,v_2}\sum_{\substack{e_1\in E_1\\ v_1\in e_1}} p_{e_1,e_2}
= p_{v_1,v_2}\sum_{e_1\in E_1} p_{e_1,e_2}
= p_{v_1,v_2}\cdot 1
= p_{v_1,v_2},
\]
using the column partition $\sum_{e_1} p_{e_1,e_2}=1$. Also
$\sum_{w_2\in e_2} p_{v_1,v_2} p_{v_1,w_2}=p_{v_1,v_2}$ by row orthogonality.
Hence $p_{v_1,v_2}D_{v_1,e_2}=p_{v_1,v_2}-p_{v_1,v_2}=0$.

Therefore $p_{v_1,v_2}D_{v_1,e_2}=0$ for all $v_2$, and summing over $v_2$ (using the row partition $\sum_{v_2}p_{v_1,v_2}=1$) gives $D_{v_1,e_2}=0$. This is the entrywise form of $A_{\Lambda_1}P_{E}=P_{V}A_{\Lambda_2}$.
\end{proof}

\begin{proposition}
\label{prop:game-alg-presentation}
Let $\Lambda_1, \Lambda_2$ be two hypergraphs. Then $\cl A(\mathrm{HypIso}(\Lambda_1,\Lambda_2))$ is the
 unital $*$-algebra generated by elements
$\{p_{v_1,v_2}\}_{v_1\in V_1,v_2\in V_2}$ and $\{p_{e_1,e_2}\}_{e_1\in E_1,e_2\in E_2}$ such that:
\begin{itemize}
  \item[(i)] $P_{V}:=(p_{v_1,v_2})_{v_1\in V_1,\ v_2\in V_2}$ and $P_{E}:=(p_{e_1,e_2})_{e_1\in E_1,\ e_2\in E_2}$ are magic unitaries;
  \item[(ii)] the incidence intertwining holds:
  \[
     A_{\Lambda_1}\,P_{E} \;=\; P_{V}\,A_{\Lambda_2}.
  \]
\end{itemize}
\end{proposition}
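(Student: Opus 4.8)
The plan is to exhibit mutually inverse unital $*$-homomorphisms between $\cl A(\mathrm{HypIso}(\Lambda_1,\Lambda_2))$ and the universal $*$-algebra $\cl B$ presented by the two magic unitaries $P_V,P_E$ together with the intertwiner $A_{\Lambda_1}P_E=P_VA_{\Lambda_2}$. Recall that $\cl A(\mathrm{HypIso}(\Lambda_1,\Lambda_2))$ is generated by projections $e_{x,a}$ ($x,a\in V\sqcup E$) with $\sum_a e_{x,a}=1$ and $e_{x,a}e_{y,b}=0$ whenever $\lambda(x,y,a,b)=0$. The first step is to discard the ``wrong-side'' generators: the type/swap rule (i) forces $\lambda(x,x,a,a)=0$ whenever $a$ does not lie in the hypergraph opposite to $x$, whence $e_{x,a}=e_{x,a}^2=0$ by rule (iii). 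Thus the only surviving generators are the four ``swap blocks'' $e_{v_1,v_2},e_{v_2,v_1}$ ($v_i\in V_i$) and $e_{e_1,e_2},e_{e_2,e_1}$ ($e_i\in E_i$), and I set $p_{v_1,v_2}:=e_{v_1,v_2}$, $p_{e_1,e_2}:=e_{e_1,e_2}$.

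Next I would verify that $P_V=(p_{v_1,v_2})$ and $P_E=(p_{e_1,e_2})$ are magic unitaries satisfying the intertwiner, so that the assignment $p_{v_1,v_2}\mapsto p_{v_1,v_2}$, $p_{e_1,e_2}\mapsto p_{e_1,e_2}$ extends to a homomorphism $\cl B\to\cl A(\mathrm{HypIso}(\Lambda_1,\Lambda_2))$. Row sums and row orthogonality of $P_V$ come from $\sum_a e_{v_1,a}=1$ and synchronicity; column orthogonality $p_{v_1,v_2}p_{v_1',v_2}=0$ ($v_1\neq v_1'$) follows from the ``equal'' case of the vertex--vertex rule (iii). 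For the column sums I would first establish $e_{v_1,v_2}=e_{v_2,v_1}$: rule (iii) gives $e_{v_1,v_2}e_{v_2,v_1'}=0$ for $v_1'\neq v_1$, so summing the reverse row-partition yields $e_{v_1,v_2}e_{v_2,v_1}=e_{v_1,v_2}$; the symmetric computation gives $e_{v_2,v_1}e_{v_1,v_2}=e_{v_2,v_1}$, and taking adjoints forces $e_{v_1,v_2}=e_{v_2,v_1}$. Then $\sum_{v_1}p_{v_1,v_2}=\sum_{v_1}e_{v_2,v_1}=1$. The same arguments apply to $P_E$ via rule (iv). Finally, the incidence rule (ii) yields $p_{v_1,v_2}p_{e_1,e_2}=p_{e_1,e_2}p_{v_1,v_2}=0$ exactly in the mismatched-incidence cases, which by Lemma~\ref{l_inclusion_rel} is equivalent to $A_{\Lambda_1}P_E=P_VA_{\Lambda_2}$.

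For the reverse homomorphism $\cl A(\mathrm{HypIso}(\Lambda_1,\Lambda_2))\to\cl B$ I would send $e_{v_1,v_2}$ and $e_{v_2,v_1}$ to $p_{v_1,v_2}$, send $e_{e_1,e_2}$ and $e_{e_2,e_1}$ to $p_{e_1,e_2}$, send every wrong-side generator to $0$, and then check that each defining relation of the game algebra holds in $\cl B$. Rules (i), (ii) and the synchronicity relations are immediate from the magic-unitary axioms, and the vertex--edge incidence relations (for questions from the same or from opposite hypergraphs) reduce directly to Lemma~\ref{l_inclusion_rel}(iii). I expect the main obstacle to be the ``distinct adjacent vs.\ distinct non-adjacent'' parts of rules (iii) and (iv): these must be shown to be automatic consequences of the intertwiner, since $\cl B$ carries no a priori adjacency data. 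The key trick is a resolution-of-identity insertion: if $u,u'\in V_1$ share an edge $e_1$ while their images $w,w'\in V_2$ share none, then writing $p_{u,w}p_{u',w'}=p_{u,w}\bigl(\sum_{e_2\in E_2}p_{e_1,e_2}\bigr)p_{u',w'}$ and applying Lemma~\ref{l_inclusion_rel}(iii) to both factors forces each summand to vanish unless $w,w'\in e_2$ simultaneously, which never happens; hence $p_{u,w}p_{u',w'}=0$. Dualizing (inserting $\sum_{v_2\in V_2}p_{v_1,v_2}=1$ for a shared vertex $v_1$, together with the transposed intertwiner $P_V^*A_{\Lambda_1}=A_{\Lambda_2}P_E^*$) handles the edge--edge intersection relations of rule (iv), and the symmetric vertex cases follow likewise. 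The two homomorphisms are visibly mutually inverse on generators, which completes the identification.
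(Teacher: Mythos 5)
Your proposal is correct, and its overall architecture---mutually inverse unital $*$-homomorphisms obtained from the two universal properties---together with its entire forward half (killing the wrong-side generators via $\lambda(x,x,a,a)=0$, deriving the transpose identification $e_{v_1,v_2}=e_{v_2,v_1}$ from the relation-preservation rule, and converting the incidence rule into the intertwiner via Lemma~\ref{l_inclusion_rel}) coincides with the paper's proof. Where you genuinely diverge is in the converse direction, at exactly the step you flag as the main obstacle: showing that the adjacency/intersection rules hold automatically in $\mathcal B$. The paper first derives secondary intertwining identities $A_{\Lambda_1}A_{\Lambda_1}^{\top}P_V=P_VA_{\Lambda_2}A_{\Lambda_2}^{\top}$ and $A_{\Lambda_1}^{\top}A_{\Lambda_1}P_E=P_EA_{\Lambda_2}^{\top}A_{\Lambda_2}$, rewrites them entrywise in terms of the Gram matrices $C_i=A_{\Lambda_i}A_{\Lambda_i}^{\top}$ and $D_i=A_{\Lambda_i}^{\top}A_{\Lambda_i}$ (which encode adjacency and edge intersection), and then sandwiches the resulting sum identities between projections so that row/column orthogonality collapses them. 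Your resolution-of-identity insertion replaces all of that: for $u,u'\in e_1$ with $w,w'$ sharing no edge, inserting $\sum_{e_2\in E_2}p_{e_1,e_2}=1$ between $p_{u,w}$ and $p_{u',w'}$ and applying Lemma~\ref{l_inclusion_rel}(iii) kills every summand (if $w\notin e_2$ the left factor vanishes; if $w\in e_2$ then $w'\notin e_2$ and the right factor vanishes), and the three symmetric mismatch cases follow from the analogous insertions of $\sum_{e_1}p_{e_1,e_2}$, $\sum_{v_2}p_{v_1,v_2}$ and $\sum_{v_1}p_{v_1,v_2}$. In fact you do not even need the transposed intertwiner you invoke for the edge--edge case, since Lemma~\ref{l_inclusion_rel}(iii) already gives vanishing of the mixed products in both orders. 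Your route is shorter and more transparent (the shared edge or vertex serves as an explicit witness), whereas the paper's route has the side benefit of producing the Gram-matrix intertwiners, which recur in the graph-case analysis (Proposition~\ref{prop:GI-vs-QAut-graph}, via $A_iA_i^{*}=A^{(G_i)}+D_i$). Both arguments are complete.
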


\begin{proof}

By the type/swap and synchronicity rule, for $x\in V_i$ (resp.\ $x\in E_i$) the only potentially
non-zero $p_{x,a}$ are those with $a\in V_{3-i}$ (resp.\ $a\in E_{3-i}$). Indeed, let $v_i,v_i' \in V_{i}$; then $\lambda(v_i,b,v_i',y)=0$ for any $b,y$, so that 
\[
p_{v_i,v_i'} = p_{v_i,v_i'}\Big(\sum_{b}p_{y,b}\Big)= \sum_{b}p_{v_i,v_i'}p_{y,b}=0.
\]
For $v_i\in V_{i}$ and $e_i\in E_{i}$ note that 
$\lambda(v_i,b,e_i,y)=0$ for any $b,y$ since answers must match the question type. Hence, a similar argument shows the rest of them. Thus $(p_{x,a})_{x,a\in X}$ consists of matrices: 
\[
P_{V} := (p_{v_1,v_2})_{v_1\in V_1,\ v_2\in V_2},\qquad
P_{E} := (p_{e_1,e_2})_{e_1\in E_1,\ e_2\in E_2}.
\]

We will show that they are magic unitaries. We first treat $P_{V}=(p_{v_1,v_2})_{v_1\in V_1,\ v_2\in V_2}$. We note that in the game $*$-algebra, each $p_{x,y}$ is a selfadjoint idempotent; thus it suffices to show that the rows and columns are orthogonal and add to $1$. Fix $v_1\in V_1$; synchronicity for the question pair $(v_1,v_1)$ forces that two different vertex answers on the $V_2$ side are mutually exclusive, hence
\[
p_{v_1,v_2}\,p_{v_1,v_2'}=0 \quad \text{for } v_2\neq v_2'.
\]
By the row partition and previous arguments, $\sum_{v_{2} \in V_{2}}p_{v_{1}, v_{2}} = 1$.

\smallskip
Using the row partition, we have
\[
p_{v_1,v_2}
= p_{v_1,v_2}\Big(\sum_{v_1'\in V_1} p_{v_2,v_1'}\Big)
= \sum_{v_1'\in V_1} p_{v_1,v_2}\,p_{v_2,v_1'}.
\]
By the relationship preservation rule
\[
\lambda\big(v_1,v_2, v_2,v_1'\big)=0 \quad\text{unless}\quad v_1'=v_1,
\]
so that
\[
p_{v_1,v_2}\,p_{v_2,v_1'}=0 \quad \text{for } v_1'\neq v_1.
\]
Therefore the sum collapses to
\[
p_{v_1,v_2} \;=\; p_{v_1,v_2}\,p_{v_2,v_1}.
\]
By symmetry (swap the roles of $v_1$ and $v_2$ in the previous argument) we also get
\[
p_{v_2,v_1} \;=\; p_{v_2,v_1}\,p_{v_1,v_2}.
\]
Hence 
\[
p_{v_1,v_2}=(p_{v_1,v_2})^{*}= (p_{v_1,v_2}\,p_{v_2,v_1})^*=  p_{v_2,v_1}\,p_{v_1,v_2}=p_{v_2,v_1}.
\]
In particular, we may freely identify the transpose block with $P_{V}$.

Fix $v_2\in V_2$. Using bisynchronicity,
\[
p_{v_1,v_2}\,p_{w_1,v_2}
= p_{v_2,v_1}\,p_{v_2,w_1}=0 \quad (v_1\neq w_1),
\]
by row orthogonality again,
\[
\sum_{v_1\in V_1} p_{v_1,v_2}
= \sum_{v_1\in V_1} p_{v_2,v_1} = 1.
\]
Thus $P_{V}$ is a magic unitary. The same arguments apply verbatim to $P_{E}=(p_{e_1,e_2})_{e_1\in E_1,\ e_2\in E_2}$. 
Therefore $P_{E}$ is also a magic unitary.

By Lemma \ref{l_inclusion_rel}, the intertwining relation is equivalent to showing that 
for all $v_1 \in V_1, e_1 \in E_1, v_2 \in V_2, e_2 \in E_2$,
\[
p_{v_1,v_2}\,p_{e_1,e_2} = p_{e_1,e_2}\,p_{v_1,v_2} = 0,
\]
whenever either $v_1 \notin e_1$ and $v_2 \in e_2$ or $v_1 \in e_1$ and $v_2 \notin e_2$,
which is forced by the incidence preservation rule.

Conversely, let $\mathcal B$ be the unital $*$-algebra generated
by two magic unitaries $P_{V}=(p_{v_1,v_2})$ and $P_{E}=(p_{e_1,e_2})$ satisfying
$A_{\Lambda_1}P_{E}=P_{V}A_{\Lambda_2}$. Define a family
$\{q_{x,a}\}_{x,a\in X}$ by
\[
q_{x,a}=
\begin{cases}
p_{v_1,v_2} & x=v_1\in V_1,\ a=v_2\in V_2,\\
p_{v_2,v_1} & x=v_2\in V_2,\ a=v_1\in V_1,\\
p_{e_1,e_2} & x=e_1\in E_1,\ a=e_2\in E_2,\\
p_{e_2,e_1} & x=e_2\in E_2,\ a=e_1\in E_1,\\
0 & \text{(forbidden by type)}.
\end{cases}
\]
The generators $ q_{x,a}$ are selfadjoint idempotents that sum to the identity over $ a$, since $P_{V}$, $P_{E}$ are magic unitaries. We have to check that $ q_{x,a} q_{y,b}=0 $ whenever $ \lambda(x,y,a,b)=0$.
By Lemma~\ref{l_inclusion_rel}, incidence preservation follows  from the intertwining relation. Also, the type/swap rule is immediate since we set zero to all the relevant blocks. Hence it suffices to check the vertex-vertex and edge-edge relation preservation and in particular the adjacency/intersecting relation since equal to equal is taken care of by row/column orthogonality.

From the intertwining relations 
it follows that
\begin{equation}\label{eq:vertex-intertwiner}
A_{\Lambda_1}A_{\Lambda_{1}}^{\top}\,P_{V} \;=\; P_{V}\,A_{\Lambda_2}A_{\Lambda_2}^{\top}.
\end{equation}
Indeed: as $P_{V}A_{\Lambda_{2}} = A_{\Lambda_{1}}P_{E},$ then $P_{E}^{*}A_{\Lambda_{1}}^{\top} = A_{\Lambda_{2}}^{\top}P_{V}^{*}$. Multiplying both sides on the right by $P_{V}$ yields $P_{E}^{*}A_{\Lambda_{1}}^{\top}P_{V} = A_{\Lambda_{2}}^{\top}$, and thus $P_{E}A_{\Lambda_{2}}^{\top} = A_{\Lambda_{1}}^{\top}P_{V}$; this is  establishes (\ref{eq:vertex-intertwiner}).
Write $C_1:=A_{\Lambda_1}A_{\Lambda_1}^{\top}$ and
$C_2:=A_{\Lambda_2}A_{\Lambda_2}^{\top}$, so that
\[
C_i(v_i,w_i) \;=\; \sum_{e_i\in E_i} (A_{\Lambda_i})_{v_i,e_i}\,(A_{\Lambda_i})_{w_i,e_i},
\]
and hence $C_i(v_i,w_i)>0$ if and only if $v_i,w_i$ share a hyperedge in $\Lambda_i$ (meaning $C_i(v_i,w_i)$ is the number of edges containing both $v_i,w_i$).

It suffices to consider adjacency mismatches. By \eqref{eq:vertex-intertwiner},
\begin{equation}\label{eq:entrywise-V}
\sum_{w_1:\,C_1(v_1,w_1)>0} p_{w_1,\,v_2}
\;=\;
\sum_{v_2':\,C_2(v_2',v_2)>0} p_{v_1,\,v_2'}
\qquad(v_1,v_2)\in V_1\times V_2.
\end{equation}
Assume $C_1(v_1,w_1)>0$ and $C_2(v_2,w_2)=0$.
Multiply \eqref{eq:entrywise-V} (with fixed $(v_1,w_2)$) on the left by $p_{v_1,v_2}$
and on the right by $p_{w_1,w_2}$:
\[
p_{v_1,v_2}\Big(\sum_{x_1:\,C_1(v_1,x_1)>0} p_{x_1,w_2}\Big)p_{w_1,w_2}
\;=\;
p_{v_1,v_2}\Big(\sum_{v_2':\,C_2(v_2',w_2)>0} p_{v_1,v_2'}\Big)p_{w_1,w_2}.
\]
On the right, $C_2(v_2,w_2)=0$ implies $v_2\notin\{v_2':C_2(v_2',w_2)>0\}$, hence
$p_{v_1,v_2}p_{v_1,v_2'}=0$ for every summand by row orthogonality, so the right-hand side is $0$.
On the left, column orthogonality annihilates all terms except $x_1=w_1$, giving
$p_{v_1,v_2}p_{w_1,w_2}$. Thus $p_{v_1,v_2}p_{w_1,w_2}=0$. On the other hand, assume $C_1(v_1,w_1)=0$ and $C_2(v_2,w_2)>0$. The same calculation yields $0$ on the left-hand side
(because $w_1\notin\{x_1:C_1(v_1,x_1)>0\}$), while the right-hand side reduces to $p_{v_1,v_2}p_{w_1,w_2}$
(since $v_2\in\{v_2':C_2(v_2',w_2)>0\}$), hence again $p_{v_1,v_2}p_{w_1,w_2}=0$. Taking adjoints gives $p_{w_1,w_2}p_{v_1,v_2}=0$ in both cases.

From the intertwining we also obtain
\begin{equation}\label{eq:edge-int}
P_{E}\,A_{\Lambda_2}^{\top}A_{\Lambda_2} \;=\; A_{\Lambda_1}^{\top}A_{\Lambda_1}\,P_{E}.
\end{equation}
With  $D_1:=A_{\Lambda_1}^{\top}A_{\Lambda_1}$ and
$D_2:=A_{\Lambda_2}^{\top}A_{\Lambda_2}$, we have
\[
D_{i}(e_i,f_i) \;=\; \sum_{v_i\in V_{i}} (A_{\Lambda_{i}})_{v_i,e_i}\,(A_{\Lambda_{i}})_{v_i,f_i}
= |\,e_i\cap f_i\,|,
\]
so $D_i(e_i,f_i)>0$ if and only if $e_i,f_i$ intersect, and $D_i(e_i,f_i)=0$ if and only if $e_i,f_i$ are disjoint. Rewrite \eqref{eq:edge-int} entrywise: for every $(e_1,e_2)\in E_1\times E_2$,
\begin{equation}\label{eq:edge-entry}
\sum_{e_2' :\,D_2(e_2,e_2')>0} p_{e_1,e_2'}
\;=\;
\sum_{e_1' :\,D_1(e_1',e_1)>0} p_{e_1',e_2}.
\end{equation} 
Assume $D_1(e_1,e_1')>0$ and $D_2(e_2,e_2')=0$. Multiply \eqref{eq:edge-entry}
on the left by $p_{e_1',e_2'}$ and on the right by $p_{e_1,e_2}$:
\[
p_{e_1',e_2'}\Big(\sum_{e_2'':\,D_2(e_2,e_2'')>0} p_{e_1,e_2''}\Big)p_{e_1,e_2}
\;=\;
p_{e_1',e_2'}\Big(\sum_{e_1'':\,D_1(e_1'',e_1)>0} p_{e_1'',e_2}\Big)p_{e_1,e_2}.
\]
On the left-hand side, $e_2'\notin\{e_2'':D_2(e_2,e_2'')>0\}$, so by column orthogonality every term is $0$.
On the right-hand side, since $e_1'\in\{e_1'':D_1(e_1'',e_1)>0\}$, column orthogonality annihilates all terms
except $e_1''=e_1'$, leaving $p_{e_1',e_2'}p_{e_1,e_2}$. Hence $p_{e_1',e_2'}p_{e_1,e_2}=0$. Now assume $D_1(e_1,e_1')=0$ and $D_2(e_2,e_2')>0$. The same computation gives
$0$ on the right-hand side (since $e_1'\notin\{e_1'':D_1(e_1'',e_1)>0\}$), while the left-hand side reduces to
$p_{e_1',e_2'}p_{e_1,e_2}$ (as $e_2'\in\{e_2'':D_2(e_2,e_2'')>0\}$). Thus $p_{e_1',e_2'}p_{e_1,e_2}=0$.
Taking adjoints yields $p_{e_1,e_2}p_{e_1',e_2'}=0$.

We conclude that  $\mathcal A(\mathrm{HypIso}(\Lambda_1,\Lambda_2))$ is the 
unital $*$-algebra generated by $\{p_{v_1,v_2}\}$ and $\{p_{e_1,e_2}\}$ with
$P_{V},P_{E}$ magic unitaries and $A_{\Lambda_1}P_{E}=P_{V}A_{\Lambda_2}$, as
claimed.
\end{proof}

\begin{remark} \rm 
We note that using the characterization of the game $*$-algebra obtained in Proposition \ref{prop:game-alg-presentation} we have that $ |V_1|=|V_2|$ and $ |E_1|=|E_2|$, whenever $ \cl A({\rm HypIso}(\Lambda_1, \Lambda_2)) \neq 0$. Indeed, note that since the matrices $(p_{v_1,v_2})_{v_{1}, v_{2}}$ and $(p_{e_1,e_2})_{e_{1}, e_{2}}$ are magic unitaries, $ |V_1|=\sum_{v_{1}} \sum_{v_{2}} p_{v_1,v_2}=\sum_{v_{2}} \sum_{v_{1}} p_{v_1,v_2}= |V_2| $ (and a similar relation holds for the edges). Thus without loss of generality we may assume that the hypergraphs considered in this work have the same vertex and edge cardinalities.
\end{remark}

Using the characterization obtained by results in \cite{hmps, kps} (see Theorem \ref{th:synchro_charact}) and Remark \ref{r:classintertwin} we have:

\begin{theorem} \label{prop:quantum_isomorphisms}
    Given hypergraphs $ \Lambda_1, \Lambda_2$, the following hold
    \begin{enumerate}
        \item[(i)] $ \Lambda_1 \cong_{\rm loc} \Lambda_2$ if and only if $ \Lambda_1 \cong \Lambda_2$,
        \item[(ii)] $ \Lambda_1 \cong_{\rm q} \Lambda_2$ if and only if there exist finite dimensional Hilbert space $H$ and quantum permutation matrices $P_{V} = (p_{v_1,v_2})_{v_1 \in V_{1}, v_2 \in V_{2}}$ and $P_{E} = (p_{e_1,e_2})_{e_1 \in E_{1}, e_2 \in E_{2}}$ with entries in $\cl B(H)$ such that $ A_{\Lambda_{1}}P_{E} = P_{V}A_{\Lambda_{2}}$;
        \item[(iii)] $ \Lambda_1 \cong_{\rm qa} \Lambda_2$ if and only if  quantum permutation matrices $P_{V}=(p_{v_1,v_2})_{v_1 \in V_{1}, v_2 \in V_{2}}$ and $ P_{E}= (p_{e_1,e_2})_{e_1 \in E_{1}, e_2 \in E_{2}}$ with entries  in an ultrapower of the hyperfinite $ II_1$-factor $ \cl R^{\omega}$  and  such that  $A_{\Lambda_1}P_{E}= P_{V} A_{\Lambda_2}$;
        \item[(iv)] $ \Lambda_1 \cong_{\rm qc} \Lambda_2$ if and only if there exists a unital $C^{*}$-algebra $A$ equipped with a  faithful tracial state and  quantum permutation matrices $P_{V}=(p_{v_1,v_2})_{v_1 \in V_{1}, v_2 \in V_{2}}$ and $ P_{E}= (p_{e_1,e_2})_{e_1 \in E_{1}, e_2 \in E_{2}}$ with entries  in $ A$  and  such that  $A_{\Lambda_1}P_{E}= P_{V} A_{\Lambda_2}$;
        \item[(v)] $ \Lambda_1 \cong_{C^*} \Lambda_2$ if and only if there exist a Hilbert space $H$ and quantum permutation matrices $P_{V} = (p_{v_1,v_2})_{v_1 \in V_{1}, v_2 \in V_{2}}$ and $P_{E} = (p_{e_1,e_2})_{e_1 \in E_{1}, e_2 \in E_{2}}$ with entries in $\cl B(H)$ such that $ 
        A_{\Lambda_{1}}P_{E} = P_{V}A_{\Lambda_{2}}$. 
    \end{enumerate}
\end{theorem}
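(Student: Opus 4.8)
The plan is to derive all five equivalences uniformly from two ingredients already in hand: the universal description of the game $*$-algebra in Proposition~\ref{prop:game-alg-presentation}, and the dictionary between perfect strategies and unital $*$-homomorphisms out of a synchronous game algebra in Theorem~\ref{th:synchro_charact}. The key observation is purely formal: since $\cl A(\mathrm{HypIso}(\Lambda_1,\Lambda_2))$ is \emph{the} unital $*$-algebra \emph{universally} generated by two magic unitaries $P_V,P_E$ subject to $A_{\Lambda_1}P_E=P_V A_{\Lambda_2}$, giving a unital $*$-homomorphism $\cl A(\mathrm{HypIso}(\Lambda_1,\Lambda_2))\to T$ into any unital $*$-algebra $T$ is \emph{the same data} as a choice of matrices $(p_{v_1,v_2})$ and $(p_{e_1,e_2})$ over $T$ that are magic unitaries and satisfy the intertwining relation. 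Each part of the theorem is then obtained by specializing $T$ to the appropriate target.

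First I would record the universality step precisely: a unital $*$-homomorphism $\pi$ out of the game algebra is determined by the images $\pi(p_{v_1,v_2})$ and $\pi(p_{e_1,e_2})$, which automatically inherit the selfadjoint-idempotent, row/column-orthogonality, and intertwining relations since $\pi$ is a unital $*$-homomorphism; conversely, any family in $T$ satisfying these relations extends to a unique such $\pi$ by the universal property. This yields a genuine bijection between $*$-homomorphisms and admissible families of generators. In the C$^*$-algebra cases one also uses the observation from Subsection~\ref{ss_cqg} that for a magic unitary over a C$^*$-algebra the row/column orthogonality is automatic, so the images form genuine quantum permutation matrices.

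With this in place, the five parts follow by reading off the correct target from Theorem~\ref{th:synchro_charact} and from the definition of $\cong_{C^*}$:
\begin{itemize}
  \item[(ii)] take $T=\cl B(H)$ with $H$ finite-dimensional and invoke Theorem~\ref{th:synchro_charact}(iii);
  \item[(iii)] take $T=\cl R^{\cl U}$ an ultrapower of the hyperfinite ${\rm II}_1$ factor and invoke Theorem~\ref{th:synchro_charact}(ii);
  \item[(iv)] take $T$ a unital C$^*$-algebra with a faithful tracial state and invoke Theorem~\ref{th:synchro_charact}(i);
  \item[(v)] take $T=\cl B(H)$ for an arbitrary Hilbert space $H$, which is exactly the definition of $\Lambda_1\cong_{C^*}\Lambda_2$.
\end{itemize}
For part (i) there is one extra elementary step: by Theorem~\ref{th:synchro_charact}(iv) a perfect $\mathrm{loc}$-strategy amounts to a unital $*$-homomorphism to $\bb C$, i.e.\ to a pair of magic unitaries over $\bb C$. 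Since the only selfadjoint idempotents in $\bb C$ are $0$ and $1$, a magic unitary over $\bb C$ is exactly a permutation matrix, and the intertwining relation $A_{\Lambda_1}P_E=P_V A_{\Lambda_2}$ is then precisely the condition of Remark~\ref{r:classintertwin} characterizing classical isomorphisms; hence $\Lambda_1\cong_{\mathrm{loc}}\Lambda_2$ if and only if $\Lambda_1\cong\Lambda_2$.

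I do not anticipate a serious obstacle: the substantive work has already been carried out in Proposition~\ref{prop:game-alg-presentation} and Theorem~\ref{th:synchro_charact}, and the theorem is essentially their conjunction. The only points requiring care are making the universality bookkeeping explicit—verifying that the correspondence between $*$-homomorphisms and admissible families of generators is a bijection, which is immediate from the universal property—and the scalar computation in the local case identifying $\bb C$-valued magic unitaries with permutation matrices.
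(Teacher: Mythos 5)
Your proposal is correct and is essentially the paper's own argument: the paper states this theorem without a written proof, deriving it exactly as you do by combining the universal presentation of the game algebra (Proposition~\ref{prop:game-alg-presentation}) with the representation-theoretic dictionary of Theorem~\ref{th:synchro_charact} and, for the local case, Remark~\ref{r:classintertwin}. Your explicit treatment of the universality bookkeeping and the identification of $\bb C$-valued magic unitaries with permutation matrices simply spells out what the paper leaves implicit.
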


\subsection{Connection to the graph isomorphism game}
We conclude this section by relating the hypergraph isomorphism game algebra to the known graph isomorphism setting \cite{AMRSVV} when the hypergraphs are simple graphs. In Proposition~\ref{prop:GI-vs-QAut-graph}, we will show that if $G_1,G_2$ are simple graphs then
$\mathcal A(\mathrm{HypIso}(G_1,G_2))$ admits a vertex–only presentation with the
adjacency intertwiner and the edgewise commutation relations. Our approach follows the one in~\cite[Section 4]{frs}, extended here to pairs of hypergraphs and $*$-algebras.

Let $G_i=(V_i,E_i)$ be simple graphs for $i = 1, 2$. We write $A^{(G_i)}\in M_{V_i}(\{0,1\})$ for the adjacency matrix and $A_i\in M_{V_i\times E_i}(\{0,1\})$ for the vertex--edge incidence matrix.
For simple graphs one has the well-known identity
\begin{equation}\label{eq:AAstar-adj-degree}
 A_iA_i^{*} \,=\, A^{(G_i)} + D_i, \qquad D_i:=\operatorname{diag}\big(\deg_i(v)\big)_{v\in V_i},
\end{equation}
\noindent where ${\rm deg}_{i}(v)$ denoted the number of vertices adjacent to $v$ in $G_{i}$, $i = 1, 2$.

Let $\Lambda=(V,E)$ be a hypergraph.  
For a vertex $v\in V$, the \emph{degree} of $v$ is the number of hyperedges containing it:
\( 
\deg(v)\;:=\;|\{\,e\in E : v\in e\,\}|.
\)
When necessary, we use ${\rm deg}_{i}(v)$ to denote the degree of $v$ inside hypergraph $\Lambda_{i}$, $i = 1, 2$. Note that when $\Lambda = (V, E)$ is a graph, ${\rm deg}(v)$ coincides with the number of vertices adjacent to $v$ in $\Lambda$, for any $v \in V$.

\begin{lemma}\label{lem:degree-intertwine-hyp}
Let $\Lambda_1=(V_1,E_1)$ and $\Lambda_2=(V_2,E_2)$ be hypergraphs, and suppose
$P_V,P_E$ satisfy the intertwining relation
\[
A_{\Lambda_1}\,P_E \;=\; P_V\,A_{\Lambda_2}.
\]
If $D_i=\operatorname{diag}\big(\deg(v_i)\big)_{v_i\in V_i}$  denotes the  degree matrix of $\Lambda_i$, then
\begin{equation}\label{eq:degree-blocks-hyp}
D_1\,P_V \,=\, P_V\,D_2,
\end{equation}
for all $v_1\in V_1$, $v_2\in V_2$. In particular, $p_{v_1,v_2}=0$ unless $\deg_1(v_1)=\deg_2(v_2)$.
\end{lemma}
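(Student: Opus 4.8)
The plan is to read the degrees off directly as row sums of the incidence matrices, which interact cleanly with the magic-unitary structure, rather than route through the quadratic intertwiner \eqref{eq:vertex-intertwiner}. The key observation is that $\deg_i(v_i)$ is exactly the $v_i$-th row sum of $A_{\Lambda_i}$, so that $A_{\Lambda_i}\mathbf 1_{E_i}=D_i\mathbf 1_{V_i}$, where $\mathbf 1_{E_i},\mathbf 1_{V_i}$ denote the all-ones vectors indexed by $E_i$ and $V_i$; and that a magic unitary has all row sums equal to $1$, so in particular $P_E\mathbf 1_{E_2}=\mathbf 1_{E_1}$.

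Concretely, I would multiply the intertwining relation $A_{\Lambda_1}P_E=P_VA_{\Lambda_2}$ on the right by $\mathbf 1_{E_2}$. On the left, $P_E\mathbf 1_{E_2}=\mathbf 1_{E_1}$ collapses the edge block and leaves $A_{\Lambda_1}\mathbf 1_{E_1}$, whose $v_1$-entry is $\deg_1(v_1)\,1$. On the right, $A_{\Lambda_2}\mathbf 1_{E_2}=D_2\mathbf 1_{V_2}$ is the degree vector of $\Lambda_2$, so the $v_1$-entry of $P_VA_{\Lambda_2}\mathbf 1_{E_2}$ is $\sum_{v_2\in V_2}\deg_2(v_2)\,p_{v_1,v_2}$. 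Equating entries gives, for every $v_1\in V_1$, the single relation
\[
\deg_1(v_1)\,1 \;=\; \sum_{v_2\in V_2}\deg_2(v_2)\,p_{v_1,v_2}.
\]

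To pass from this to the entrywise statement \eqref{eq:degree-blocks-hyp}, I would fix $v_2$ and multiply the displayed identity on the right by $p_{v_1,v_2}$. Row orthogonality of the magic unitary $P_V$ annihilates every summand with $v_2'\neq v_2$, while $p_{v_1,v_2}^2=p_{v_1,v_2}$ retains the $v_2'=v_2$ term, leaving $\deg_1(v_1)\,p_{v_1,v_2}=\deg_2(v_2)\,p_{v_1,v_2}$. Since $\deg_1(v_1),\deg_2(v_2)$ are scalars, this is precisely the $(v_1,v_2)$-entry of $D_1P_V=P_VD_2$, and it forces $p_{v_1,v_2}=0$ whenever $\deg_1(v_1)\neq\deg_2(v_2)$.

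The computation is short and I do not anticipate a genuine obstacle; the only point requiring care is to resist routing the argument through the quadratic intertwiner $C_iP_V=P_VC_i$ with $C_i=A_{\Lambda_i}A_{\Lambda_i}^{\top}$, since the off-diagonal entries of $C_i$ count \emph{common hyperedges} and would contaminate any attempt to isolate the diagonal degrees. The row-sum route sidesteps this entirely, using only that $P_E$ and $P_V$ are magic unitaries (unit row sums and row orthogonality) together with the linear intertwining relation.
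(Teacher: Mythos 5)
Your proof is correct and is essentially identical to the paper's own argument: both multiply the intertwining relation on the right by the all-ones vector $\mathbf 1_{E_2}$, use $P_E\mathbf 1_{E_2}=\mathbf 1_{E_1}$ and $A_{\Lambda_i}\mathbf 1_{E_i}=\deg_i$ to obtain $\deg_1 = P_V\deg_2$, and then multiply the resulting entrywise identity by $p_{v_1,v_2}$, invoking row orthogonality and idempotency to isolate $\deg_1(v_1)p_{v_1,v_2}=\deg_2(v_2)p_{v_1,v_2}$. (The quadratic intertwiner $C_iP_V=P_VC_i$ that you were careful to avoid is in fact not used by the paper in this lemma either; it appears only in other results.)
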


\begin{proof}
As $P_E$ is a magic unitary, $P_E\,\mathbf 1_{E_2}=\mathbf 1_{E_1}$ where ${\bf 1}_{E_{i}}\in \bb C^{E_{i}}, i = 1, 2$, is the all-one column vector.
Multiplying $P_{V}A_{E_{2}}$ on the right by $\mathbf 1_{E_2}$ yields
\[
P_V\,A_{\Lambda_2}\,\mathbf 1_{E_2}\;=\; A_{\Lambda_1}\,\mathbf 1_{E_1}.
\]
This is $\deg_1 = P_V \deg_2$, where $\deg_i$ is the degree vector, which gives the matrix identity \eqref{eq:degree-blocks-hyp}. Indeed, it is easily checked that $A_{E_{i}}\mathbf{1}_{E_{i}} = {\rm deg}_{i}$, $i =1, 2$. As ${\rm deg}_{1} = P_{V}{\rm deg}_{2}$, we see in particular that for each $v_{k} \in V_{1}$,
\begin{gather}\label{eqn_deg_rel}
    {\rm deg}_{1}(v_{k}) = \sum\limits_{v_{j} \in V_{2}}p_{kj}{\rm deg}_{2}(v_{j}).
\end{gather}
Relation (\ref{eq:degree-blocks-hyp}) holds if and only if for each $v_{k} \in V_{1}, v_{j} \in V_{2}$ the relation 
\begin{gather*}
    {\rm deg}_{1}(v_{k})p_{kj} = p_{kj}{\rm deg}_{2}(v_{j})
\end{gather*}
\noindent holds; using (\ref{eqn_deg_rel}) and orthogonality, we have
\begin{gather*}
    {\rm deg}_{1}(v_{k})p_{kj} = \sum\limits_{v_{\ell} \in V_{2}}p_{k\ell}{\rm deg}_{2}(v_{\ell})p_{kj} = p_{kj}{\rm deg}_{2}(v_{j}),
\end{gather*}
\noindent establishing our claim.
\end{proof}

\begin{lemma}\label{lem:edge-from-vertex}
Let $G_i=(V_i,E_i)$ be simple graphs for $i = 1, 2$ and $\mathcal A(\mathrm{HypIso}(G_1,G_2))$ be the hypergraph game $*$-algebra.  If $v_1\sim_{G_1} w_1$ and $v_2\sim_{G_2} w_2$, then
\begin{equation}\label{eq:edge-commute}
 p_{v_1,v_2}\,p_{w_1,w_2} \;=\; p_{w_1,w_2}\,p_{v_1,v_2}.
\end{equation}
\end{lemma}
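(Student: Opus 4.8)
The plan is to introduce the edge projection $p_{e_1,e_2}$ attached to the edges $e_1=\{v_1,w_1\}\in E_1$ and $e_2=\{v_2,w_2\}\in E_2$ (these are genuine edges precisely because $v_1\sim_{G_1}w_1$ and $v_2\sim_{G_2}w_2$), and to use it as a bridge between the two products $p_{v_1,v_2}p_{w_1,w_2}$ and $p_{w_1,w_2}p_{v_1,v_2}$. The guiding observation is that in a \emph{simple} graph, $e_1=\{v_1,w_1\}$ is the \emph{unique} edge containing both $v_1$ and $w_1$; this is exactly what lets a sum over incident edges collapse to a single term.

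First I would write down the entrywise intertwining relation of Lemma~\ref{l_inclusion_rel}(ii). For the pair $(v_1,e_2)$ with $e_2=\{v_2,w_2\}$ it reads
\[
\sum_{e_1'\ni v_1} p_{e_1',e_2}\;=\;p_{v_1,v_2}+p_{v_1,w_2}.
\]
Multiplying on the right by $p_{w_1,w_2}$ and using column orthogonality of $P_V$ (so $p_{v_1,w_2}p_{w_1,w_2}=0$) kills the second right-hand summand, while Lemma~\ref{l_inclusion_rel}(iii) forces $p_{e_1',e_2}p_{w_1,w_2}=0$ whenever $w_1\notin e_1'$. Since the only edge $e_1'\ni v_1$ that also contains $w_1$ is $e_1$ itself, the left-hand sum collapses to one term and I obtain
\[
p_{v_1,v_2}\,p_{w_1,w_2}\;=\;p_{e_1,e_2}\,p_{w_1,w_2}.
\]
The symmetric computation, interchanging the roles of $v$ and $w$ and multiplying by $p_{v_1,v_2}$ on the right, gives $p_{w_1,w_2}p_{v_1,v_2}=p_{e_1,e_2}p_{v_1,v_2}$.

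Writing $P:=p_{v_1,v_2}$, $Q:=p_{w_1,w_2}$, $T:=p_{e_1,e_2}$ (all self-adjoint idempotents), the two identities read $PQ=TQ$ and $QP=TP$. Taking adjoints yields $QP=QT$ and $PQ=PT$, so that $TQ=PT$ and $TP=QT$. The final step is to show that $T$ and $Q$ commute: left-multiplying $TQ=PT$ by $T$ gives $TQ=TPT$, while right-multiplying $TP=QT$ by $T$ gives $TPT=QT$, where both use $T^2=T$; hence $TQ=QT$. Combining, $PQ=TQ=QT=QP$, which is exactly \eqref{eq:edge-commute}.

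The main obstacle is conceptual rather than computational: one cannot expect the two vertex projections to commute on their own, and the naive guess $p_{e_1,e_2}=p_{v_1,v_2}p_{w_1,w_2}+p_{v_1,w_2}p_{w_1,v_2}$ fails because the ``crossed'' products do not vanish. The device that makes everything work is funnelling both orderings through the single projection $T$, which is legitimate exactly because the graphs are simple; for multigraphs the collapse of $\sum_{e_1'\ni v_1}$ to one term would break down. It is also worth noting that the bare relations $PQ=PT$, $QP=QT$, $PQ=TQ$, $QP=TP$ are circular among themselves, so the genuinely essential move is the $TPT$ computation, which uses idempotency of $T$ to produce a new element bridging $TQ$ and $QT$. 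I expect the only points needing care to be the bookkeeping of which orthogonality (row versus column) annihilates which term, and the repeated but routine use of $T^2=T$.
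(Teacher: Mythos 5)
Your proof is correct, but it takes a genuinely different route from the paper's. The paper multiplies the two entrywise intertwining identities (for $u=v_1$ and $u=w_1$) against each other and collapses via column orthogonality in $P_E$ to obtain the decomposition $p_{e_1,e_2}=p_{v_1,v_2}p_{w_1,w_2}+p_{v_1,w_2}p_{w_1,v_2}$; it then repeats this with the transposed intertwining relation $P_E A_2^{\top}=A_1^{\top}P_V$ to get the second decomposition $p_{e_1,e_2}=p_{v_1,v_2}p_{w_1,w_2}+p_{w_1,v_2}p_{v_1,w_2}$, and commutativity falls out by comparing the two and using selfadjointness of $p_{e_1,e_2}$. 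You instead multiply a single entrywise identity by one vertex projection, invoke Lemma~\ref{l_inclusion_rel}(iii) to kill the non-incident terms, and obtain the absorption identities $PQ=TQ$ and $QP=TP$; adjoints plus the idempotency trick $TQ=TPT=QT$ then finish the argument. Your route is leaner in its inputs — it never needs the transposed intertwining relation, only the forward one via Lemma~\ref{l_inclusion_rel}(ii)--(iii) — and the closing projection-algebra step is a nice self-contained argument. What the paper's route buys in exchange is the explicit edge-projection formula $p_{e_1,e_2}=p_{v_1,v_2}p_{w_1,w_2}+p_{v_1,w_2}p_{w_1,v_2}$, which is precisely the edge-lift formula \eqref{eq:edge-lift} reused in the converse direction of Proposition~\ref{prop:GI-vs-QAut-graph}; your proof does not produce it. One aside in your write-up is inaccurate: you call that decomposition a ``naive guess'' that ``fails because the crossed products do not vanish,'' but the identity is in fact true in $\mathcal A(\mathrm{HypIso}(G_1,G_2))$ — the crossed products need not vanish, they simply appear as the second summand — and proving it is exactly what the paper does. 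This does not affect your argument, since you never rely on the claim.
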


\begin{proof}
Let  $e_1=\{v_1,w_1\}\in E_1$ and $e_2=\{v_2,w_2\}\in E_2$. We will show first that 
\[
p_{e_1,e_2} = p_{v_1,v_2}\,p_{w_1,w_2} \;+\; p_{v_1,w_2}\,p_{w_1,v_2}.
\]
From the intertwining relation $A_{1}P_E=P_VA_{2}$ we have, for every $u\in V_1$ and $e_2=\{v_2,w_2\}$,
\[
\sum_{\substack{f_1\in E_1\\ u\in f_1}} p_{f_1,e_2} \;=\; p_{u,v_2}+p_{u,w_2}.
\]
Apply this with $u=v_1$ and $u=w_1$, then multiply the two equalities:
\[
\Bigg(\sum_{\substack{f_1\in E_1\\ v_1\in f_1}} p_{f_1,e_2}\Bigg)
\Bigg(\sum_{\substack{g_1\in E_1\\ w_1\in g_1}} p_{g_1,e_2}\Bigg)
= (p_{v_1,v_2}+p_{v_1,w_2})(p_{w_1,v_2}+p_{w_1,w_2}).
\]
In the left-hand side of the equation, column orthogonality in $P_E$ annihilate all terms except $f_1=g_1$, and since the graphs are simple the unique edge containing both $v_1,w_1$ is $e_1$. Hence the left-hand side is $p_{e_1,e_2}$. Expanding the right-hand side and using column orthogonality in $P_V$  gives $p_{v_1,v_2}p_{w_1,w_2} + p_{v_1,w_2}p_{w_1,v_2}$.

Next we show that
\[
p_{e_1,e_2}= p_{v_1,v_2}\,p_{w_1,w_2} \;+\; p_{w_1,v_2}\,p_{v_1,w_2}, 
\]
use the transposed intertwining relation $P_{E}A_{{2}}^{\top} = A_{{1}}^{\top}P_{V}$  at the
$(e_1,v)$–entry, yields for any $v\in V_2$:
\begin{equation*}\label{eq:edge-vertex-sum}
\sum_{\substack{f_2\in E_2\\ v\in f_2}} p_{e_1,f_2}
\;=\; p_{v_1,v}+p_{w_1,v}.
\end{equation*}
Apply this with $v=v_2$ and $v=w_2$, multiply the two identities  and use column orthogonality to obtain 
\[
p_{e_1,e_2}
\;=\;
p_{v_1,v_2}p_{w_1,w_2}\;+\;p_{w_1,v_2}p_{v_1,w_2}.
\]
Now the result follows from combining the expressions of $p_{e_1,e_2}$ and using its selfadjointness.
\end{proof}

\begin{proposition}\label{prop:GI-vs-QAut-graph}
Let $G_1,G_2$ be simple graphs with adjacency matrices $A^{(G_1)},A^{(G_2)}$ and incidence matrices $A_1,A_2$. The algebra $\mathcal A(\mathrm{HypIso}(G_1,G_2))$ is canonically isomorphic to the unital $*$--algebra generated by the entries of a magic unitary $U=(u_{v_1,v_2})_{v_1\in V_1,v_2\in V_2}$ such that
\begin{align*}
 A^{(G_1)}\,U &= U\,A^{(G_2)},
\end{align*}
and $u_{v_1,v_2} u_{w_1,w_2} = u_{w_1,w_2} u_{v_1,v_2}$  whenever  $ v_1\sim_{G_1} w_1 \text{ and } v_2\sim_{G_2} w_2.$
\end{proposition}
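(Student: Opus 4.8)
The plan is to exhibit two mutually inverse unital $*$-homomorphisms between $\mathcal A := \mathcal A(\mathrm{HypIso}(G_1,G_2))$ and the vertex--only algebra $\mathcal B$ generated by a magic unitary $U=(u_{v_1,v_2})$ with $A^{(G_1)}U = UA^{(G_2)}$ and the stated edgewise commutation. First I would produce $\phi\colon \mathcal B\to\mathcal A$ sending $u_{v_1,v_2}\mapsto p_{v_1,v_2}$, where $P_V=(p_{v_1,v_2})$ is the vertex block of the game algebra from Proposition~\ref{prop:game-alg-presentation}. To see this is well defined I must check that $P_V$ satisfies the defining relations of $\mathcal B$: the magic unitary property is part of Proposition~\ref{prop:game-alg-presentation}, the edgewise commutation is exactly Lemma~\ref{lem:edge-from-vertex}, and the adjacency intertwiner $A^{(G_1)}P_V = P_V A^{(G_2)}$ follows by taking \eqref{eq:vertex-intertwiner}, rewriting each product $A_iA_i^{\top}=A^{(G_i)}+D_i$ via \eqref{eq:AAstar-adj-degree} (the incidence matrices being real), and cancelling the degree terms using $D_1P_V=P_VD_2$ from Lemma~\ref{lem:degree-intertwine-hyp}.

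Next I would show $\phi$ is surjective and build a candidate inverse. Surjectivity is immediate from Lemma~\ref{lem:edge-from-vertex}: for $e_1=\{a_1,b_1\}\in E_1$ and $e_2=\{a_2,b_2\}\in E_2$ one has $p_{e_1,e_2}=p_{a_1,a_2}p_{b_1,b_2}+p_{a_1,b_2}p_{b_1,a_2}$, so each edge generator already lies in the subalgebra generated by the vertex generators. This formula dictates the inverse $\psi\colon\mathcal A\to\mathcal B$: set $\psi(p_{v_1,v_2})=u_{v_1,v_2}$ and $\psi(p_{e_1,e_2})=u_{a_1,a_2}u_{b_1,b_2}+u_{a_1,b_2}u_{b_1,a_2}$. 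A preliminary point is that this is independent of the orderings of the vertices within $e_1$ and within $e_2$, which follows from the edgewise commutation relations in $\mathcal B$. Granting well-definedness, $\phi\circ\psi$ and $\psi\circ\phi$ act as the identity on generators (the edge case of $\phi\circ\psi=\mathrm{id}$ being Lemma~\ref{lem:edge-from-vertex} once more), so $\phi$ is the desired isomorphism.

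The crux is verifying that $\psi$ respects the relations of Proposition~\ref{prop:game-alg-presentation}. That $\psi(P_V)$ is a magic unitary is built into $\mathcal B$; the work is in showing $\psi(P_E):=(\psi(p_{e_1,e_2}))$ is one. Selfadjointness and idempotency are short: writing $\psi(p_{e_1,e_2})=(u_{a_1,a_2}+u_{a_1,b_2})(u_{b_1,a_2}+u_{b_1,b_2})$ exhibits it as a product of two commuting row-projections, using row/column orthogonality of $U$ and edgewise commutation. The row and column sums equal to $1$ reduce, after collapsing unordered edges to ordered adjacent pairs, to the entrywise adjacency intertwiner $\sum_{w_1\sim v_1}u_{w_1,v_2}=\sum_{w_2\sim v_2}u_{v_1,w_2}$ together with the magic unitary identities.

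The hardest part is orthogonality of the edge family. I would first establish, as in Lemma~\ref{l_inclusion_rel}, the incidence relations $u_{v_1,v_2}\psi(p_{e_1,e_2})=\psi(p_{e_1,e_2})u_{v_1,v_2}=0$ whenever $v_1\in e_1,\,v_2\notin e_2$ or $v_1\notin e_1,\,v_2\in e_2$ (whose verification uses row/column orthogonality of $U$ and, in one direction, edgewise commutation; the right-multiplication versions follow by taking adjoints). By Lemma~\ref{l_inclusion_rel} these simultaneously yield the incidence intertwiner $A_1\psi(P_E)=\psi(P_V)A_2$. For $e_2\ne e_2'$ with $e_1$ fixed I would then insert the resolution of identity $\sum_{v_2}u_{a_1,v_2}=1$ between $\psi(p_{e_1,e_2})$ and $\psi(p_{e_1,e_2'})$; the incidence relations kill every term except those with $v_2\in e_2\cap e_2'$. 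When $e_2,e_2'$ are disjoint the intersection is empty and the product vanishes, while when they share a single vertex a direct two-line computation, again using edgewise commutation and orthogonality of $U$, gives zero. Column orthogonality follows by the symmetric argument interchanging $G_1,G_2$. This disjoint-versus-overlapping case split is the main obstacle, as it is the only place where the combinatorics of the graphs (rather than formal magic-unitary manipulation) enters and where the edgewise commutation relations are genuinely indispensable.
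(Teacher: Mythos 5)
Your proposal is correct and takes essentially the same route as the paper: the same derivation of the adjacency intertwiner via $A_iA_i^{*}=A^{(G_i)}+D_i$ together with Lemma~\ref{lem:degree-intertwine-hyp}, the same use of Lemma~\ref{lem:edge-from-vertex} for the edgewise commutation, and the identical edge lift $u_{a_1,a_2}u_{b_1,b_2}+u_{a_1,b_2}u_{b_1,a_2}$ for the inverse map. The only divergence is in level of detail, not method: where the paper dismisses orthogonality of the lifted edge family as ``routine expansions,'' you supply the actual argument (incidence relations, inserting $\sum_{v_2}u_{a_1,v_2}=1$, and the disjoint-versus-shared-vertex case split), and you also record the well-definedness of the lift under reordering of vertices within an edge --- both worthwhile additions to, rather than departures from, the paper's proof.
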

\begin{proof}  Using \eqref{eq:AAstar-adj-degree} and the intertwining relation we have
\[
 A_1A_1^{*}\,P_V = (A_1P_E)A_2^{*} = (P_VA_2)A_2^{*} = P_V\,A_2A_2^{*}.
\]
By \eqref{eq:AAstar-adj-degree}, $(A^{(G_1)}+D_1)P_V=P_V(A^{(G_2)}+D_2)$. Lemma~\ref{lem:degree-intertwine-hyp} gives $D_1P_V=P_VD_2$, hence
\[
 A^{(G_1)}P_V = P_VA^{(G_2)},
\]
which is the  adjacency intertwining relation with $U=P_V$. The commutation relations  hold by Lemma \ref{lem:edge-from-vertex}.

\smallskip
Conversely, suppose $\widetilde{P_V}:=U=(u_{v_1,v_2})$ is a magic unitary satisfying the adjacency intertwining relation and the commutation relations from the statement. Define
\begin{equation}\label{eq:edge-lift}
 \widetilde p_{v_1,v_2}:=u_{v_1,v_2},
 \qquad
 \widetilde p_{\{v_1,w_1\},\{v_2,w_2\}}
 := u_{v_1,v_2}\,u_{w_1,w_2}+u_{v_1,w_2}\,u_{w_1,v_2}.
\end{equation}
 By assumption, $u_{v_1,v_2}$ commutes with $u_{w_1,w_2}$ (and $u_{v_1,w_2}$ with $u_{w_1,v_2}$), so each summand in \eqref{eq:edge-lift} is a product of commuting projections, hence a projection. Row/column orthogonality of $U$ makes the two summands orthogonal, so every $\widetilde p_{e_1,e_2}$ is a projection. Fix $e_1=\{v_1,w_1\}\in E_1$. Then
\begin{align*}
\sum_{\{v_2,w_2\}\in E_2} \widetilde p_{\{v_1,w_1\},\{v_2,w_2\}}
 & = \sum_{v_2\sim w_2} \big(u_{v_1,v_2}u_{w_1,w_2}+u_{v_1,w_2}u_{w_1,v_2}\big)\\
 & = \sum_{v_2,w_2} A^{(G_2)}_{v_2,w_2}\,u_{v_1,v_2}u_{w_1,w_2}\\
 &= (UA^{(G_2)}U^{*})_{v_1,w_1}.    
\end{align*}
By assumption, $UA^{(G_2)}=A^{(G_1)}U$, hence $(UA^{(G_2)}U^{*})_{v_1,w_1}=A^{(G_1)}_{v_1,w_1}=1$ because $v_1\sim w_1$. The fact that columns sum to the identity follows  analogously. Orthogonality of distinct rows/columns follows from that of $U$ by routine expansions. Therefore $\widetilde P_E = (\widetilde p_{e_1,e_2} )$ is a magic unitary. For $v_1\in V_1$ and $e_2=\{v_2,w_2\}\in E_2$,
\begin{align*}
 (A_1\widetilde P_E)_{v_1,e_2}
 & = \sum_{w_1:\,v_1\sim w_1}\!\big(u_{v_1,v_2}u_{w_1,w_2}+u_{v_1,w_2}u_{w_1,v_2}\big)\\
 &= u_{v_1,v_2} \Big(\sum_{w_1:\,v_1\sim w_1} u_{w_1,w_2}\Big)+ u_{v_1,w_2}\Big(\sum_{w_1:\,v_1\sim w_1} u_{w_1,v_2}\Big).
\end{align*}
By expressing the adjacency intertwining relation coordinate-wise, we have \begin{gather*}
    \sum_{w_1:\,v_1\sim w_1}u_{w_1,w_2}=\sum_{z_2:\,z_2\sim w_2}u_{v_1,z_2},
\end{gather*}
and likewise for the other sum; by row orthogonality of $U$  the sums reduce to $u_{v_1,v_2}$ and $u_{v_1,w_2}$ respectively. Hence $(A_1\widetilde P_E)_{v_1,e_2}=u_{v_1,v_2}+u_{v_1,w_2}=(\widetilde P_VA_2)_{v_1,e_2}$, proving $A_1\widetilde P_E=\widetilde P_VA_2$. Thus $(\widetilde P_V,\widetilde P_E)$ satisfy the defining relations, yielding an $*$-isomorphism between the two $*$-algebras. 
\end{proof}

Now recalling \cite{bigalois}, where $\cl A(\mathrm{Iso}(G_1,G_2))$ is the unital $*$-algebra generated by the entries of a magic unitary $ U =(u_{x,y})_{x\in V_1,y\in V_2}$ such that $ A^{G_1}U = U A^{G_2}$, we immediately have:
\begin{proposition} \label{p:HIGquotientGI}
Let \(G_1=(V_1,E_1)\) and \(G_2=(V_2,E_2)\) be simple graphs. Then
\[
\mathcal A(\mathrm{HypIso}(G_1,G_2))
\;\cong\;
\mathcal A(\mathrm{Iso}(G_1,G_2))
     /\bigl\langle\, 
       u_{v_1,v_2}u_{w_1,w_2}-u_{w_1,w_2}u_{v_1,v_2}
       :\; v_1\sim_{G_1}w_1,\;
         v_2\sim_{G_2}w_2
     \,\bigr\rangle.
\]
\end{proposition}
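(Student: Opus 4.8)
The plan is to read off both game algebras as universal unital $*$-algebras given by generators and relations, to observe that the presentation of the hypergraph algebra is precisely that of the graph algebra augmented by the commutator relations, and then to deduce the isomorphism as a formal consequence of the universal property of such presentations. All of the substantive work has already been carried out in Proposition~\ref{prop:GI-vs-QAut-graph}, so the argument here is essentially bookkeeping.

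First I would invoke Proposition~\ref{prop:GI-vs-QAut-graph}, which identifies $\mathcal A(\mathrm{HypIso}(G_1,G_2))$ with the universal unital $*$-algebra generated by the entries $u_{v_1,v_2}$ of a single magic unitary $U=(u_{v_1,v_2})_{v_1\in V_1,\,v_2\in V_2}$ subject to the adjacency intertwining relation $A^{(G_1)}U=UA^{(G_2)}$ together with the commutation relations $u_{v_1,v_2}u_{w_1,w_2}=u_{w_1,w_2}u_{v_1,v_2}$ for all $v_1\sim_{G_1}w_1$ and $v_2\sim_{G_2}w_2$. On the other hand, following \cite{bigalois}, the graph game algebra $\mathcal A(\mathrm{Iso}(G_1,G_2))$ is the universal unital $*$-algebra generated by the entries of a magic unitary $U$ subject only to the adjacency intertwiner $A^{G_1}U=UA^{G_2}$ (the matrices $A^{(G_i)}$ and $A^{G_i}$ being the same adjacency matrices). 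Thus the two algebras share the same generators and the same underlying block of relations, namely magic unitarity together with the adjacency intertwiner, and the hypergraph presentation differs from the graph presentation only by carrying the extra family of commutators.

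Next I would let $\mathcal I\subseteq\mathcal A(\mathrm{Iso}(G_1,G_2))$ denote the two-sided ideal generated by the elements $u_{v_1,v_2}u_{w_1,w_2}-u_{w_1,w_2}u_{v_1,v_2}$ with $v_1\sim_{G_1}w_1$ and $v_2\sim_{G_2}w_2$. Since every $u_{x,y}$ is a self-adjoint idempotent, the adjoint of such a commutator is again, up to a sign, one of the listed generators of $\mathcal I$; hence $\mathcal I$ is automatically a $*$-ideal and the quotient $\mathcal A(\mathrm{Iso}(G_1,G_2))/\mathcal I$ is a unital $*$-algebra. To identify this quotient with $\mathcal A(\mathrm{HypIso}(G_1,G_2))$ I would exhibit two mutually inverse $*$-homomorphisms defined on generators. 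The canonical surjection $\mathcal A(\mathrm{Iso}(G_1,G_2))\to\mathcal A(\mathrm{HypIso}(G_1,G_2))$, $u_{x,y}\mapsto u_{x,y}$, is well defined because the target satisfies magic unitarity and the adjacency intertwiner, and it annihilates $\mathcal I$ because the target also satisfies the commutation relations; it therefore descends to a surjection out of the quotient. In the reverse direction, the universal property of $\mathcal A(\mathrm{HypIso}(G_1,G_2))$ supplied by Proposition~\ref{prop:GI-vs-QAut-graph} yields a $*$-homomorphism $\mathcal A(\mathrm{HypIso}(G_1,G_2))\to\mathcal A(\mathrm{Iso}(G_1,G_2))/\mathcal I$, $u_{v_1,v_2}\mapsto u_{v_1,v_2}+\mathcal I$, which is well defined precisely because the quotient satisfies all three families of defining relations of the hypergraph algebra. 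As both composites act as the identity on the respective generating sets, they are mutually inverse, giving the asserted isomorphism.

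I do not expect any genuine obstacle here: there is no analytic content, and the representation-theoretic subtleties that affect general game $*$-algebras are irrelevant to this purely presentational statement. The only points requiring (routine) care are the verification that $\mathcal I$ is a $*$-ideal, which follows from the self-adjoint idempotent nature of the entries, and the bookkeeping that the two universal properties match generator-for-generator, which is immediate from Proposition~\ref{prop:GI-vs-QAut-graph}.
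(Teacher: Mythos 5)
Your proof is correct and takes essentially the same route as the paper: the paper deduces the proposition immediately from Proposition~\ref{prop:GI-vs-QAut-graph} together with the presentation of $\mathcal A(\mathrm{Iso}(G_1,G_2))$ recalled from \cite{bigalois}, exactly as you do. The extra details you supply (that the commutator ideal is automatically a $*$-ideal since the generators are self-adjoint idempotents, and the construction of the two mutually inverse $*$-homomorphisms via the universal properties) are precisely the routine bookkeeping the paper leaves implicit.
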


We remark that if $ G_1=G_2=:G$ then $ \mathcal A(\mathrm{Iso}(G,G)) = \cl O( {\rm Aut^+_{Ban}}(G))$, the Hopf $*$-algebra of the quantum automorphism group of Banica \cite{banica2} and $\mathcal A(\mathrm{HypIso}(G,G))= \cl O({\rm Aut^{+}_{Bic}}(G))$, the Hopf $*$-algebra of the quantum automorphism group of Bichon  \cite{bichon02}. In particular ${\rm Aut^{+}_{Bic}}(G)$ is a quantum subgroup of ${\rm Aut^+_{Ban}}(G)$.

\begin{corollary} 
Let $G_i=(V_i,E_i)$, $i=1,2$ be simple graphs. For ${\rm t} \in \{\rm loc,q,qa,qc, {\rm C^*}, A^*\}$, if $\mathrm{HypIso}(G_1,G_2)$ has a perfect $\rm t$-strategy, then $\mathrm{Iso}(G_1,G_2)$ has a perfect $ \rm t$-strategy. 
\end{corollary}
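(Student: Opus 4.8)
The plan is to leverage the quotient presentation from Proposition~\ref{p:HIGquotientGI}. That result exhibits $\mathcal A(\mathrm{HypIso}(G_1,G_2))$ as a quotient of $\mathcal A(\mathrm{Iso}(G_1,G_2))$ by a two-sided $*$-ideal, so there is a canonical surjective unital $*$-homomorphism
\[
\pi:\mathcal A(\mathrm{Iso}(G_1,G_2))\longrightarrow \mathcal A(\mathrm{HypIso}(G_1,G_2)).
\]
The whole argument then reduces to the observation that each notion of a perfect $\rm t$-strategy is phrased as a property (existence of a unital $*$-homomorphism out of the game algebra, or its mere non-vanishing) that pulls back along $\pi$.

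For $\rm t\in\{\mathrm{loc},\mathrm{q},\mathrm{qa},\mathrm{qc}\}$ I would invoke Theorem~\ref{th:synchro_charact}: a perfect $\rm t$-strategy for $\mathrm{HypIso}(G_1,G_2)$ amounts to a unital $*$-homomorphism $\phi:\mathcal A(\mathrm{HypIso}(G_1,G_2))\to T_{\rm t}$ into the corresponding target $T_{\rm t}$ (namely $\mathbb C$, a finite-dimensional $\mathcal B(H)$, $\mathcal R^{\mathcal U}$, or a $\mathrm C^*$-algebra with a faithful tracial state). Then $\phi\circ\pi$ is a unital $*$-homomorphism $\mathcal A(\mathrm{Iso}(G_1,G_2))\to T_{\rm t}$ into the same target, and Theorem~\ref{th:synchro_charact} converts this back into a perfect $\rm t$-strategy for $\mathrm{Iso}(G_1,G_2)$.

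The remaining two cases are read off directly from the definitions. For $\rm t=\mathrm C^*$, a perfect $\mathrm C^*$-strategy for $\mathrm{HypIso}(G_1,G_2)$ is a unital $*$-representation $\phi:\mathcal A(\mathrm{HypIso}(G_1,G_2))\to\mathcal B(H)$ on some nonzero $H$; composing, $\phi\circ\pi$ is a unital $*$-representation of $\mathcal A(\mathrm{Iso}(G_1,G_2))$ on the same $H$, nonzero because it sends the unit to $I_H$, hence a perfect $\mathrm C^*$-strategy for $\mathrm{Iso}(G_1,G_2)$. For $\rm t=A^*$, a perfect $A^*$-strategy means $\mathcal A(\mathrm{HypIso}(G_1,G_2))\neq 0$; as this algebra is a quotient of $\mathcal A(\mathrm{Iso}(G_1,G_2))$ and a quotient of the zero algebra is zero, we conclude $\mathcal A(\mathrm{Iso}(G_1,G_2))\neq 0$, i.e.\ a perfect $A^*$-strategy for $\mathrm{Iso}(G_1,G_2)$.

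I expect no genuine difficulty here; the proof is a single pullback along the surjection $\pi$. The only point requiring attention is uniformity across the six classes: the $\mathrm C^*$ and $A^*$ cases lie outside Theorem~\ref{th:synchro_charact} and must be settled directly from surjectivity of $\pi$ (functoriality of representations in the first case, and the fact that quotients preserve nontriviality in the second), rather than by the synchronous-game dictionary used for the other four.
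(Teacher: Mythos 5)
Your proof is correct and follows essentially the same route as the paper: pull a perfect $\rm t$-strategy back along the canonical surjection $\pi:\mathcal A(\mathrm{Iso}(G_1,G_2))\twoheadrightarrow \mathcal A(\mathrm{HypIso}(G_1,G_2))$ from Proposition~\ref{p:HIGquotientGI}, using Theorem~\ref{th:synchro_charact} to translate between strategies and $*$-homomorphisms. Your explicit separate treatment of the $\mathrm{C}^*$ and $A^*$ cases (which lie outside Theorem~\ref{th:synchro_charact}) is a welcome point of care that the paper's one-line proof glosses over, but it is the same argument in substance.
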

\begin{proof}
    By Theorem \ref{th:synchro_charact}, the existence of a perfect strategy of $\mathrm{HypIso}(G_1,G_2)$ translates to a $*$-representation of its game algebra. Composing with the canonical quotient map coming from Proposition \ref{p:HIGquotientGI} and invoking Theorem \ref{th:synchro_charact} again yields a representation of $\cl A(\mathrm{Iso}(G_1,G_2))$ of the same type.
\end{proof}

We note, in particular, that by Proposition~\ref{p:HIGquotientGI}, the algebra 
\(\cl A(\mathrm{HypIso}(G_1,G_2))\) is obtained from 
\(\cl A(\mathrm{Iso}(G_1,G_2))\) by imposing additional relations on the same 
set of generators. Consequently, any perfect strategy \(p\) for 
\(\mathrm{HypIso}(G_1,G_2)\) is also a perfect strategy for 
\(\mathrm{Iso}(G_1,G_2)\). In this sense, 
\(\mathrm{HypIso}(G_1,G_2)\) may be regarded as an algebraic subgame of 
\(\mathrm{Iso}(G_1,G_2)\).

\smallskip

 Given graphs $G_i$, define the neighborhood hypergraphs 
\[
\cl N(G_i) := \bigl(V_i,\ \{\mathcal N_i(x)\}_{x\in V_i}\bigr),
\qquad
\mathcal N_i(x):=\{x'\in V_i:\ x'\sim_{G_i} x\},
\]
for $i=1,2$.
\begin{lemma} \label{p:chainofgames}
For simple graphs $G_i=(V_i,E_i)$, $i=1,2$, there is a surjective $*$-homomorphism 
\begin{equation*}\label{eq:chain_of_games}
 \cl A({\rm HypIso}(\cl N(G_1),\cl N(G_2))) \twoheadrightarrow \cl A(\mathrm{Iso}(G_1,G_2) ).
\end{equation*}
In particular, if $\mathrm{Iso}(G_1,G_2) $ has a perfect $\rm t$-strategy, then ${\rm HypIso}(\cl N(G_1),\cl N(G_2))$ has a perfect $\rm t$-strategy,  for ${\rm t} \in \{\rm loc,q,qa,qc, {\rm C^*}, A^*\}$.
\end{lemma}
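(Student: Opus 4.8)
The plan is to exploit the elementary but crucial observation that the incidence matrix of the neighborhood hypergraph $\cl N(G_i)$ coincides with the adjacency matrix of $G_i$. Indeed, under the identification $E_i\leftrightarrow V_i$ that sends the edge $\cl N_i(x)$ to the vertex $x$, one has $(A_{\cl N(G_i)})_{v,\cl N_i(x)}= [\,v\in\cl N_i(x)\,]=[\,v\sim_{G_i}x\,]=(A^{(G_i)})_{v,x}$, so $A_{\cl N(G_i)}=A^{(G_i)}$. By Proposition~\ref{prop:game-alg-presentation}, the source algebra $\cl A(\mathrm{HypIso}(\cl N(G_1),\cl N(G_2)))$ is then the universal unital $*$-algebra generated by two $V_1\times V_2$ magic unitaries $P_V$ and $P_E$ subject to the single intertwining relation $A^{(G_1)}P_E=P_VA^{(G_2)}$, whereas $\cl A(\mathrm{Iso}(G_1,G_2))$ is universal for one magic unitary $U$ with $A^{(G_1)}U=UA^{(G_2)}$.

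With these presentations in hand, I would define $\Phi$ on generators by sending both $P_V\mapsto U$ and $P_E\mapsto U$. To check this extends to a well-defined unital $*$-homomorphism, I invoke the universal property of the source: the pair $(U,U)$ lives in $\cl A(\mathrm{Iso}(G_1,G_2))$, each image equals the magic unitary $U$, and the relation $A^{(G_1)}P_E=P_VA^{(G_2)}$ is carried to $A^{(G_1)}U=UA^{(G_2)}$, which is exactly the defining relation of the target. Thus all relations of the source are satisfied and $\Phi$ exists. Surjectivity is immediate, since $\Phi(P_V)=U$ and $U$ generates $\cl A(\mathrm{Iso}(G_1,G_2))$.

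The ``in particular'' clause then follows formally. For $\mathrm t\in\{\mathrm{loc},\mathrm q,\mathrm{qa},\mathrm{qc}\}$, a perfect $\mathrm t$-strategy for $\mathrm{Iso}(G_1,G_2)$ produces, via Theorem~\ref{th:synchro_charact}, a $*$-representation $\pi$ of $\cl A(\mathrm{Iso}(G_1,G_2))$ of the corresponding type; then $\pi\circ\Phi$ is a representation of $\cl A(\mathrm{HypIso}(\cl N(G_1),\cl N(G_2)))$ of the same type, which by Theorem~\ref{th:synchro_charact} yields a perfect $\mathrm t$-strategy for $\mathrm{HypIso}(\cl N(G_1),\cl N(G_2))$. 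For $\mathrm t\in\{A^*,\mathrm C^*\}$ the surjectivity and unitality of $\Phi$ transfer nontriviality and $\mathrm C^*$-representability directly from the target to the source.

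The step requiring genuine care is the identification $E_i\leftrightarrow V_i$, which is a bijection only when the neighborhoods $\{\cl N_i(x)\}_{x\in V_i}$ are pairwise distinct. When $G_i$ has false twins (distinct vertices $x,x'$ with $\cl N_i(x)=\cl N_i(x')$) one has $|E_i|<|V_i|$, so the assignment $P_E\mapsto U$ is no longer dimensionally consistent. The remedy is to replace $U$ in the edge slot by an induced edge magic unitary $\widetilde P_E\in M_{E_1,E_2}(\cl A(\mathrm{Iso}(G_1,G_2)))$ whose entries are suitable sums of the $u_{x,y}$ over the classes of vertices sharing a neighborhood, and to verify that $\widetilde P_E$ is a magic unitary satisfying $A_{\cl N(G_1)}\widetilde P_E=U\,A_{\cl N(G_2)}$. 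This verification is where the adjacency intertwiner does the real work: combined with degree preservation (the graph analogue of Lemma~\ref{lem:degree-intertwine-hyp}, forcing $u_{x,y}=0$ when $\deg x\neq\deg y$) and the magic-unitary orthogonality relations, it supplies exactly the identities among the $u_{x,y}$ that make the grouped matrix well-defined and magic. I expect this twin case to be the main obstacle; the distinct-neighborhood situation is otherwise routine.
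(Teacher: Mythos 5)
Your core construction is exactly the paper's proof: the paper also identifies $E_i$ with $V_i$ via $x\mapsto \cl N_i(x)$, encodes this as a matrix $C_i$ with $A_{\cl N(G_i)}=A^{(G_i)}C_i$, sends $P_V\mapsto U$ and $P_E\mapsto C_1^{*}UC_2$ (which is your $P_E\mapsto U$ after relabelling the index sets), obtains the homomorphism from the universal property of the presentation in Proposition~\ref{prop:game-alg-presentation}, gets surjectivity because $U$ generates $\cl A(\mathrm{Iso}(G_1,G_2))$, and transfers strategies by composing representations exactly as you do.

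The only place you diverge is the false-twin caveat, and there the comparison is instructive: your worry is legitimate, but it applies equally to the paper's own proof, which simply asserts that $\eta_i:V_i\to E_i$, $\eta_i(x)=\cl N_i(x)$, is a bijection. Under the set-of-subsets definition of a hypergraph used in Section~3, repeated neighborhoods collapse and $\eta_i$ fails to be injective; in that case the paper's formula genuinely breaks (already for $G_1=G_2=P_3$, whose two leaves $a,c$ are twins, one computes in $\cl A(\mathrm{Iso}(P_3,P_3))$ that the corresponding entry of $C_1^{*}UC_1$ is $u_{aa}+u_{ac}+u_{ca}+u_{cc}=2$, not a projection). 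The implicit resolution is to read the edge family $\{\cl N_i(x)\}_{x\in V_i}$ as indexed by $V_i$ rather than as a set, which is consistent with the relational definition of hypergraphs adopted in Section~5; under that reading the bijection is automatic and your main argument (equivalently the paper's) is complete. Your sketched repair for the collapsed case, by contrast, should not be regarded as part of a finished proof: you would need to show that the grouped sums $\sum_{x_2'}u_{x_1,x_2'}$ over a twin class are independent of the representative $x_1$ and are again projections, and neither follows from magic-unitary orthogonality alone (orthogonality says nothing about $u_{x_1,x_2}u_{x_1',x_2'}$ when both indices differ). So either adopt the indexed-family convention, in which case your proof coincides with the paper's and is complete, or flag the twin case as an open point --- one that the paper itself does not address.
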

\begin{proof}
Let $ E_i$ be the edge set of $ \cl N(G_i)$; then there exists a bijection $ \eta_i : V_i \to E_i$, $ \eta_i (x_i) = \cl N(x_i)$, $x_i \in V_i $ for $i = 1, 2$. This induces a permutation matrix $C_i \in M_{V_i\times E_i}$ such that $ A_{\cl N(G_i)} = A^{G_i} C_i$, where $A_{\cl N(G_i)}$ is the incidence matrix of the neighborhood hypergraph and $A^{G_i} $ is the adjacency matrix of the graph. Then, if $U$ is the magic unitary in $ \cl A({\rm Iso}(G_1,G_2))$ and if we set $ P_V':=U$ and $ P_E':= C_1^* U C_2$, we see that $ A_{\cl N(G_1)}P_E' = P_V'A_{\cl N(G_2)}$. By universality there exists a unital $*$-homomorphism $ \phi : \cl A( {\rm HypIso}(\cl N(G_1),\cl N(G_2))) \to \cl A({\rm Iso}(G_1,G_2)) $ mapping $ \phi(P_V) = P_V'$ and $ \phi(P_E)= P_E'$  that is moreover a surjection since $ U$ generates $\cl A({\rm Iso}(G_1,G_2))$. 
    
\end{proof}

\begin{theorem} \label{th:friendly_neighborhood}
    There exist hypergraphs $\Lambda_{1}$ and $\Lambda_{2}$ such that $\Lambda_{1} \cong_{\rm q} \Lambda_{2}$, while $\Lambda_{1} \not \cong_{\rm loc} \Lambda_{2}$.
\end{theorem}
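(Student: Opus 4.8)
The plan is to transport a pair of quantum-but-not-classically isomorphic \emph{graphs} into the hypergraph setting through the neighborhood-hypergraph construction $\cl N(\cdot)$ of Lemma~\ref{p:chainofgames}. First I would invoke \cite{AMRSVV} to fix graphs $G_1,G_2$ that are quantum isomorphic but not isomorphic, and set $\Lambda_i:=\cl N(G_i)$. The quantum isomorphism is then immediate: since $G_1\cong_{\mathrm q}G_2$, the game $\mathrm{Iso}(G_1,G_2)$ has a perfect $\mathrm q$-strategy, and Lemma~\ref{p:chainofgames} transfers it to a perfect $\mathrm q$-strategy for $\mathrm{HypIso}(\cl N(G_1),\cl N(G_2))$; hence $\Lambda_1\cong_{\mathrm q}\Lambda_2$.

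For the failure of classical isomorphism I would use Theorem~\ref{prop:quantum_isomorphisms}(i), which reduces $\Lambda_1\cong_{\mathrm{loc}}\Lambda_2$ to the existence of an honest hypergraph isomorphism $\cl N(G_1)\cong\cl N(G_2)$; thus everything rests on showing the two neighborhood hypergraphs are \emph{not} isomorphic. Unwinding the definitions via Remark~\ref{r:classintertwin}, such an isomorphism is a pair of permutation matrices $(P_V,P_E)$ with $A_{\cl N(G_1)}P_E=P_V A_{\cl N(G_2)}$. Since the hyperedges of $\cl N(G_i)$ are precisely the neighborhoods $\cl N_i(x)$ indexed by vertices through $\eta_i\colon x\mapsto\cl N_i(x)$, the incidence matrix of $\cl N(G_i)$ equals, after this identification, the adjacency matrix $A^{(G_i)}$ itself; an isomorphism therefore produces a two-sided permutation equivalence $A^{(G_1)}=P\,A^{(G_2)}\,Q$.

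The crux, and the step I expect to be the main obstacle, is to deduce $G_1\cong G_2$ from such a two-sided equivalence, which would contradict the choice of $G_1,G_2$. This is genuinely delicate, because two non-isomorphic graphs can have adjacency matrices equivalent under independent row/column permutations (equivalently, isomorphic canonical double covers), so a neighborhood hypergraph does not determine its graph in general. I would circumvent this by choosing $G_1,G_2$ within a class for which reconstruction holds — the cleanest being \emph{bipartite} graphs, where the canonical double cover of a connected graph $G$ splits as $G\sqcup G$, so that $A^{(G_1)}\sim A^{(G_2)}$ two-sidedly already forces $G_1\cong G_2$. Concretely I would take a bipartite quantum-but-not-classically isomorphic pair (bipartiteness is preserved under quantum isomorphism, so one partner being bipartite suffices) and arrange that $\eta_i$ is injective, i.e. no two vertices share a neighborhood, so that the identification above is legitimate. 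Alternatively, since only existence is required, one may instead fix one explicit pair arising from the binary-constraint-system reduction and verify directly that their neighborhood hypergraphs fail to be isomorphic. In either route the transfer of the quantum strategy comes for free from Lemma~\ref{p:chainofgames}, and all the difficulty is concentrated in the non-isomorphism of the neighborhood hypergraphs.
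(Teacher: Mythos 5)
Your proposal follows the paper's own route step for step: the same choice of a quantum-isomorphic, non-isomorphic pair $G_1,G_2$ from \cite{AMRSVV}, the same hypergraphs $\Lambda_i=\cl N(G_i)$, the same transfer of the perfect $\mathrm q$-strategy through Lemma~\ref{p:chainofgames}, and the same reduction of $\cong_{\rm loc}$ to a classical hypergraph isomorphism via Theorem~\ref{prop:quantum_isomorphisms}(i) and Remark~\ref{r:classintertwin}. The divergence is at the crux. The paper closes the argument by invoking the arguments of \cite[Theorem 4.16(i)]{ht_one} to deduce $G_1\cong G_2$ from $\cl N(G_1)\cong\cl N(G_2)$ for these specific graphs, whereas you, suspecting that no general reconstruction principle of this kind can hold, try to re-engineer the examples so that reconstruction becomes provable. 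Your suspicion is mathematically well founded: open neighborhood hypergraphs do not determine graphs in general. For instance $C_6$ and $2K_3$ are non-isomorphic, twin-free, $2$-regular, and $\cl N(C_6)\cong 2K_3\cong\cl N(2K_3)$, which is exactly the two-sided permutation-equivalence phenomenon you describe. So some input specific to the chosen pair is genuinely needed at this point.

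The genuine gap is that neither of your two workarounds is actually carried out, so the crux is left unproved. The bipartite route needs a connected, twin-free, \emph{bipartite} pair of quantum-isomorphic but non-isomorphic graphs; you assert one can be taken, but none is exhibited, and none is available from the paper's sources: the magic-square graphs $G(M,b),G(M,0)$ of \cite{AMRSVV} contain $K_4$'s (the satisfying assignments of each constraint are pairwise conflicting), hence are not bipartite. Manufacturing bipartite examples from known ones is not routine either: passing to canonical double covers preserves quantum isomorphism, but proving the double covers are non-isomorphic is equivalent (for connected non-bipartite graphs) to proving the neighborhood hypergraphs are non-isomorphic, i.e.\ it circles back to the very step in question; subdivisions would require a separate argument that quantum isomorphism survives subdivision. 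Your fallback, ``verify directly that the neighborhood hypergraphs of the BCS pair fail to be isomorphic,'' is precisely the content of the missing step and is not performed. As written, therefore, the proposal does not prove the theorem: the missing ingredient is exactly the graph-specific reconstruction argument that the paper imports from \cite[Theorem 4.16(i)]{ht_one}. Your diagnosis of the subtlety is correct and valuable --- it explains why the paper must appeal to those arguments rather than to any general principle --- but a complete proof has to supply that ingredient for some concrete quantum-isomorphic pair.
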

\begin{proof}
    By \cite[Theorem 6.4]{AMRSVV}, there exist graphs $G_{i} = (V_{i}, E_{i}), i = 1, 2$ such that $G_{1} \cong_{\rm q} G_{2}$, while $G_{1} \not \cong G_{2}$.  Let $\cl N(G_i) = (V_{i}, \{\cl{N}(x)\}_{x \in V_{i}})$ be neighborhood hypergraphs over $V_{i}$. By  lemma  \ref{p:chainofgames}, since $G_{1} \cong_{\rm q} G_{2}$ we have that $\cl N(G_1) \cong_{\rm q} \cl N(G_2)$.

Now, assume towards contradiction that $\cl{N}(G_{1}) \cong_{\rm loc} \cl{N}(G_{2})$.
 Using Remark \ref{r:classintertwin} and Theorem \ref{prop:quantum_isomorphisms} we may argue that $\cl{N}(G_{1}) \cong \cl{N}(G_{2})$. Now, use the arguments of \cite[Theorem 4.16(i)]{ht_one} to force $G_{1} \cong G_{2}$, contradicting our choice of graphs $G_{1}$ and $G_{2}$.  
\end{proof}

\begin{proposition} \label{p:strict_incl} 
    There exist graphs $ G_1, G_2$ such that 
    \[
     \mathrm{HypIso}(G_1,G_2) \neq \mathrm{Iso}(G_1,G_2).
    \]
    In particular, there is a perfect  $\rm q$-strategy for $\mathrm{Iso}(G_1,G_2)$ that is not a perfect $\rm q$-strategy for $\mathrm{HypIso}(G_1,G_2)$. 
\end{proposition}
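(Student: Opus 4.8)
The plan is to take $G_1=G_2=K_4$, the complete graph on four vertices, and exhibit a genuinely quantum perfect strategy for $\mathrm{Iso}(K_4,K_4)$ that violates the edgewise commutation relations appearing in the presentation of $\mathcal A(\mathrm{HypIso}(K_4,K_4))$. First I would recall that, since every magic unitary commutes with the all-ones matrix $J$ (its rows and columns sum to $1$) and with $I$, it commutes with $A^{(K_4)}=J-I$; hence the adjacency intertwiner is automatic and $\mathcal A(\mathrm{Iso}(K_4,K_4))=\cl O({\rm Aut^+_{Ban}}(K_4))=\cl O(S_4^+)$. By Proposition~\ref{prop:GI-vs-QAut-graph}, the algebra $\mathcal A(\mathrm{HypIso}(K_4,K_4))=\cl O({\rm Aut^{+}_{Bic}}(K_4))$ is the further quotient imposing $u_{v_1,v_2}u_{w_1,w_2}=u_{w_1,w_2}u_{v_1,v_2}$ whenever $v_1\sim_{K_4}w_1$ and $v_2\sim_{K_4}w_2$, i.e. whenever $v_1\neq w_1$ and $v_2\neq w_2$ (Proposition~\ref{p:HIGquotientGI}).

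Next I would write down an explicit non-commutative model of the fundamental magic unitary of $S_4^+$. Fix non-commuting rank-one projections $p,q\in M_2(\mathbb C)$, e.g.\ $p=\mathrm{diag}(1,0)$ and $q=\tfrac12\left(\begin{smallmatrix}1&1\\1&1\end{smallmatrix}\right)$, so that $pq\neq qp$, and set
\[
U=\begin{pmatrix} p & 1-p & 0 & 0 \\ 1-p & p & 0 & 0 \\ 0 & 0 & q & 1-q \\ 0 & 0 & 1-q & q \end{pmatrix}\in M_4\bigl(M_2(\mathbb C)\bigr).
\]
One checks directly that $U$ is a magic unitary, and by the remark above that $A^{(K_4)}U=U A^{(K_4)}$. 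By the characterisation of perfect quantum strategies for the graph isomorphism game (cf.\ \cite{AMRSVV,LMR}, and Theorem~\ref{th:synchro_charact} applied to the game algebra $\mathcal A(\mathrm{Iso})$), the induced unital $*$-representation $\pi:\mathcal A(\mathrm{Iso}(K_4,K_4))\to M_2(\mathbb C)$ on the nonzero finite-dimensional space $\mathbb C^2$ yields a perfect $\mathrm q$-strategy for $\mathrm{Iso}(K_4,K_4)$.

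Finally I would observe that $\pi(u_{1,1})=p$ and $\pi(u_{3,3})=q$ do not commute, while $1\sim_{K_4}3$; thus $\pi$ sends the commutator $u_{1,1}u_{3,3}-u_{3,3}u_{1,1}$ to $pq-qp\neq 0$. By Proposition~\ref{prop:GI-vs-QAut-graph}, $\pi$ therefore does not factor through $\mathcal A(\mathrm{HypIso}(K_4,K_4))$, so the associated strategy is perfect for $\mathrm{Iso}(K_4,K_4)$ but cannot be (the vertex part of) a perfect $\mathrm q$-strategy for $\mathrm{HypIso}(K_4,K_4)$. Moreover, the commutator being nonzero in $\mathcal A(\mathrm{Iso}(K_4,K_4))$ shows that the quotient map of Proposition~\ref{p:HIGquotientGI} has nontrivial kernel, giving $\mathcal A(\mathrm{HypIso}(K_4,K_4))\neq\mathcal A(\mathrm{Iso}(K_4,K_4))$ and hence $\mathrm{HypIso}(K_4,K_4)\neq\mathrm{Iso}(K_4,K_4)$. (In fact, since all distinct vertices of $K_4$ are adjacent, the Bichon relations force all entries of the magic unitary to commute pairwise, collapsing $\mathcal A(\mathrm{HypIso}(K_4,K_4))$ to the commutative algebra $\mathbb C(S_4)$.)

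The only real content is the existence of the genuinely quantum model $U$, that is, the non-classicality of $S_4^+$; given that and the two algebraic presentations already established, the separation is immediate. The point requiring care is conceptual rather than computational: the games $\mathrm{Iso}$ and $\mathrm{HypIso}$ have different question/answer sets, so ``a strategy for one that is not a strategy for the other'' must be read through the identifications of Proposition~\ref{prop:GI-vs-QAut-graph} and Theorem~\ref{th:synchro_charact}, namely that extending $\pi$ to a perfect $\mathrm q$-strategy for $\mathrm{HypIso}(K_4,K_4)$ would require the induced edge operators $u_{v_1,v_2}u_{w_1,w_2}+u_{v_1,w_2}u_{w_1,v_2}$ to be genuine projections, which forces the Bichon commutation relations that $\pi(U)$ violates.
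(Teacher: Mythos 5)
Your algebra-level separation is correct, and it is a genuinely simpler route than the paper's. The paper works with the Mermin--Peres magic-square graphs $G(M,b)$, $G(M,0)$ of \cite{AMRSVV} and explicit Pauli projectors; you take $G_1=G_2=K_4$, note that the adjacency intertwining is automatic for any magic unitary (it commutes with $J$ and $I$, hence with $A^{(K_4)}=J-I$), so that $\mathcal A(\mathrm{Iso}(K_4,K_4))=\mathcal O(S_4^+)$, and observe via Proposition~\ref{prop:GI-vs-QAut-graph} that the Bichon relations force $\mathcal A(\mathrm{HypIso}(K_4,K_4))$ to be commutative (all distinct vertices of $K_4$ are adjacent), hence equal to $\mathbb C(S_4)$. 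Your block matrix $U$ is indeed a magic unitary, the induced $\pi:\mathcal A(\mathrm{Iso}(K_4,K_4))\to M_2(\mathbb C)$ sends the commutator $u_{1,1}u_{3,3}-u_{3,3}u_{1,1}$ to $pq-qp\neq 0$, and this shows the quotient map of Proposition~\ref{p:HIGquotientGI} has nontrivial kernel. That settles the first assertion (the game algebras are not isomorphic --- one is noncommutative, the other commutative) with far less machinery than the paper needs.

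However, there is a genuine problem with your treatment of the ``In particular'' clause. The paper defines a perfect $\mathrm q$-strategy to be a \emph{correlation} (Subsection~\ref{ss_nonlocal}), and the correlation your model actually produces is \emph{local}, not ``genuinely quantum'' as you claim: with the normalized trace, $p(a,b\mid x,y)=\tfrac12\mathrm{Tr}\bigl(U_{x,a}U_{y,b}\bigr)$ is exactly the uniform mixture of the four deterministic strategies coming from the Klein four-group $\{e,(12),(34),(12)(34)\}\subseteq S_4=\mathrm{Aut}(K_4)$; for instance $\tfrac12\mathrm{Tr}(pq)=\tfrac14$ reproduces the uniform cross-block distribution, while within each block the answers are perfectly correlated. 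Since each of these permutations is a (hyper)graph automorphism of $K_4$, this correlation \emph{is} the vertex restriction of a perfect local --- hence $\mathrm q$ --- strategy for $\mathrm{HypIso}(K_4,K_4)$. So at the level of correlations your example provably cannot witness the second assertion: only the particular operator realization $(U,\mathrm{tr})$ violates the hypergraph relations, while a commuting realization of the very same behavior extends. The paper's choice of example is what protects it from this objection: since $G(M,b)\not\cong G(M,0)$, every perfect $\mathrm q$-strategy for $\mathrm{Iso}(G(M,b),G(M,0))$ is necessarily nonlocal, so the witnessing strategy cannot be reproduced classically. (Both proofs, yours and the paper's, verify non-commutation only for one operator model, and the paper explicitly leaves open whether its graphs are $\mathrm q$-isomorphic as hypergraphs; but for $K_4$ the strategy-level claim is not merely unproven --- it is false at the correlation level.) If you keep $K_4$, you must phrase the conclusion as a statement about game algebras and operator realizations, not about strategies in the paper's operational sense.
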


The construction comes from \cite{AMRSVV}. We will first introduce the binary constraint system game first considered in \cite{cleve-mittal}.  

A \emph{binary constraint system} (BCS) \(\mathcal F=(M,b)\) consists of Boolean variables
\(x_1,\dots,x_n\in\{0,1\}\) and parity constraints indexed by \(\ell\in[m]\) of the form
\[
\bigoplus_{i\in S_\ell} x_i = b_\ell,
\qquad S_\ell\subseteq[n],\ b_\ell\in\{0,1\},
\]
where \(\oplus\) denotes addition modulo~2.
A \emph{satisfying assignment} for constraint~\(\ell\) is a map
\(a:S_\ell\to\{0,1\}\) satisfying the parity equation; the set of all such assignments is
\[
\mathrm{Sat}_\ell(b_\ell):=\Bigl\{a:S_\ell\to\{0,1\}\ \Bigm|\ \bigoplus_{i\in S_\ell} a_i=b_\ell\Bigr\}.
\]

In the \emph{BCS game} for \(\mathcal F\), the referee chooses \((\ell,\ell')\in[m]\times[m]\) and sends \(\ell\) to Alice and \(\ell'\) to Bob.
Alice returns \(a\in\mathrm{Sat}_\ell(b_\ell)\), Bob returns \(a'\in\mathrm{Sat}_{\ell'}(b_{\ell'})\),
and the verifier accepts if and only if the two answers are \emph{consistent} on all shared variables:
\[
\forall i\in S_\ell\cap S_{\ell'}:\ a_i=a'_i.
\]
We call \(\mathcal F\) \emph{classically satisfiable} (resp.\ \emph{quantum satisfiable})
if there exists a perfect classical (resp.\ quantum) strategy.

The \emph{homogenization} of \(\mathcal F=(M,b)\) is the system
\(\mathcal F_0=(M,0)\) obtained by setting all right-hand sides \(b_\ell=0\); it shares the same supports \(S_\ell\).

Atserias et al. \cite{AMRSVV} associate to any BCS~\(\mathcal F\) a graph \(G(\mathcal F)\) whose:
\begin{itemize}
  \item[(i)] vertices are pairs \((\ell,a)\) with \(\ell\) a constraint index and \(a\in\mathrm{Sat}_\ell(b_\ell)\) a  satisfying assignment;
  \item[(ii)] \((\ell,a)\) and \((\ell',a')\) are adjacent if and only if they \emph{conflict} on some shared variable, i.e., \(\exists\,i\in S_\ell\cap S_{\ell'}\) with \(a_i\neq a'_i\).
\end{itemize}
Non-adjacency means consistency on all overlaps. For fixed \(M\), the family \(\{G(M,b):b\in\{0,1\}^m\}\) varies only in the parity pattern.

Let \(\mathcal F=(M,b)\) and its homogenization \(\mathcal F_0=(M,0)\).
By \cite[Theorem 6.3]{AMRSVV}
\[
\mathcal F \text{ has a perfect quantum strategy }
\iff
G(M,b)\text{ and }G(M,0)\text{ are quantum isomorphic.}
\]
In particular, \cite[Theorem~6.4]{AMRSVV} exhibits graphs that are quantum isomorphic but not isomorphic (as in Theorem \ref{th:friendly_neighborhood}). The specific pair comes from the Mermin--Peres magic square game and its homogenization; the perfect quantum strategy for the graph isomorphism game arises from the perfect quantum strategy for the magic square game. We will show that this particular perfect strategy is not a perfect \(\mathrm{q}\)-strategy for the hypergraph isomorphism game. By virtue of Proposition~\ref{prop:GI-vs-QAut-graph}, it suffices to show the non-commutation of two entries of the magic unitary corresponding to adjacent vertices.

\smallskip

A perfect quantum BCS strategy yields a magic unitary realizing a perfect quantum strategy for \(\mathrm{Iso}(G(M,b),G(M,0))\). Concretely, upon input \(x=(\ell,f)\) (from either graph), the player measures the projective measurement \(\{F_\ell^{(a)}\}_{a\in\mathrm{Sat}_\ell(b_\ell)}\), obtains outcome \(a\), and outputs
\( 
(\ell,\ f\oplus a).
\)
If \(x\in G(M,b)\), then \(f\oplus a\) has parity~\(0\) and belongs to \(G(M,0)\); conversely, if \(x\in G(M,0)\), the output lies in \(G(M,b)\). Hence, the strategy ensures that questions and answers do not lie in the same graph.

\smallskip

The Mermin--Peres magic square \cite{mermin90} encodes \(\mathcal F=(M,b)\) on variables \(x_{rc}\) (\(r,c\in\{1,2,3\}\)) with constraints
\begin{align*}
x_{r1}\oplus x_{r2}\oplus x_{r3}&=0 &&(r=1,2,3),\\
x_{1c}\oplus x_{2c}\oplus x_{3c}&=0 &&(c=1,2),\\
x_{13}\oplus x_{23}\oplus x_{33}&=1 &&(c=3),
\end{align*}
and homogenization \(\mathcal F_0=(M,0)\).
Each induces a \(24\)-vertex graph \(G(M,b)\) or \(G(M,0)\) (see \cite[Figures~1--2]{AMRSVV}).

\begin{proof}[Proof of Proposition~\ref{p:strict_incl}] 
Let $G_1,G_2$ be the graphs from \cite[Theorem~6.4]{AMRSVV} satisfying 
$G_1 \cong_{\mathrm q} G_2$.  
Explicitly, $G_1 = G(M,b)$ and $G_2 = G(M,0)$, 
where $\mathcal F = (M,b)$ is the Mermin--Peres magic-square BCS game 
and $\mathcal F_0 = (M,0)$ is its homogenization. 
By \cite[Theorem~6.3]{AMRSVV}, a perfect quantum strategy for the BCS game~$\mathcal F$ 
induces a perfect quantum strategy for the graph-isomorphism game 
$\mathrm{Iso}(G(M,b),G(M,0))$.

\smallskip

Label the vertices of $G_i$ by pairs $x=(\ell,f)$ and $y=(\ell',g)$, 
where $f$ and $g$ are local satisfying assignments of the corresponding constraints.
The projective measurement implementing the perfect quantum isomorphism strategy is 
the block matrix 
\[
u_{(\ell,f),(\ell',g)} = \delta_{\ell,\ell'}\, F_\ell^{(f\oplus g)},
\]
where each family $\{F_\ell^{(a)}\}_{a\in\mathrm{Sat}_\ell(b_\ell)}$ 
is the projective measurement used in the magic-square game.

\smallskip

The perfect magic-square strategy acts on the Hilbert space 
$H=\mathbb C^2 \otimes \mathbb C^2$ with a maximally entangled state 
and Pauli observables.  
Because the state is maximally entangled, as proved in \cite[Theorems~5.3--5.4 and Lemma~5.8]{AMRSVV}, 
this guarantees that the matrix $U=(u_{x,y})$ defined above 
is a magic unitary that intertwines the adjacency matrices of the two graphs, that is,
\[
A_{G_1} U = U A_{G_2}.
\]
Hence $U$ satisfies the adjacency-preservation relation that defines 
a quantum isomorphism between~$G_1$ and~$G_2$.

\smallskip
The perfect quantum strategy for $\mathcal F$ uses the Pauli matrices
\[
X=\begin{bmatrix}0&1\\[2pt]1&0\end{bmatrix},
\qquad
Z=\begin{bmatrix}1&0\\[2pt]0&-1\end{bmatrix},
\]
with projectors onto their $\pm1$ eigenspaces
\[
P_A^{(\pm)}=\tfrac12(I\pm A),
\]
where bit~$0$ corresponds to eigenvalue~$+1$ and bit~$1$ to~$-1$.  
The constraint measurements for the first row~$R_1$ (parity~$0$) 
and first column~$C_1$ (parity~$0$) use the observables
\[
R_1:\ \{X\!\otimes\! I,\ I\!\otimes\! X,\ X\!\otimes\! X\},
\qquad
C_1:\ \{X\!\otimes\! I,\ I\!\otimes\! Z,\ X\!\otimes\! Z\}.
\]
Hence the joint projectors for specific local assignments are
\[
F_{R_1}^{(0,0,0)} = P_X^{(+)}\!\otimes P_X^{(+)}, 
\qquad
F_{C_1}^{(0,0,0)} = P_X^{(+)}\!\otimes P_Z^{(+)}.
\]
We write
\[
P_X^{(+)}=\tfrac12
  \begin{bmatrix}1&1\\[2pt]1&1\end{bmatrix}, 
\qquad
P_Z^{(+)}=
  \begin{bmatrix}1&0\\[2pt]0&0\end{bmatrix}.
\]

\smallskip

Consider the first-row constraint $R_1$ on variables 
$(x_{11},x_{12},x_{13})$ and the first-column constraint 
$C_1$ on $(x_{11},x_{21},x_{31})$.
Choose assignments
\[
a_{R_1}=(0,0,0),\qquad a_{C_1}=(1,1,0),
\]
which disagree on the shared variable~$x_{11}$.  
Then
\[
v_1=(R_1,a_{R_1}), \quad w_1=(C_1,a_{C_1})\in V(G_1), \qquad
v_2=(R_1,a_{R_1}), \quad w_2=(C_1,a_{C_1})\in V(G_2)
\]
are adjacent in both graphs.

\smallskip
Inside the block of $U$ with $a_{R_1}\oplus a_{R_1}=(0,0,0)$ and  $a_{C_1}\oplus a_{C_1}=(0,0,0) $,
\[
u_{v_1,v_2}=F_{R_1}^{a_{R_1}\oplus a_{R_1}}=P_X^{(+)}\!\otimes P_X^{(+)},
\qquad
u_{w_1,w_2}=F_{C_1}^{a_{C_1}\oplus a_{C_1}}=P_X^{(+)}\!\otimes P_Z^{(+)}.
\]
Since $P_X^{(+)}$ and $P_Z^{(+)}$ do not commute, we have
\[
[\,u_{v_1,v_2},\,u_{w_1,w_2}\,]
 = P_X^{(+)}\!\otimes [P_X^{(+)},P_Z^{(+)}]\neq 0.
\]
Yet $v_1\sim_{G_1} w_1$ and $v_2\sim_{G_2} w_2$, 
so by Proposition~\ref{prop:GI-vs-QAut-graph} (the edgewise-commutation condition),
any perfect quantum strategy for the hypergraph-isomorphism game 
$\mathrm{HypIso}(G_1,G_2)$ must satisfy
\([u_{v_1,v_2},u_{w_1,w_2}]=0.\)

\smallskip

The explicit calculation above shows that the magic unitary $U$ implementing 
the perfect quantum graph-isomorphism strategy, obtained from the 
maximally entangled magic-square strategy, indeed satisfies the adjacency-preservation
relation $A_{G_1}U=UA_{G_2}$ (by \cite[Theorems~5.3--5.4 and Lemma~5.8]{AMRSVV}),
but fails the stronger edgewise-commutation property required for 
$\mathrm{HypIso}(G_1,G_2)$.  
Consequently, the perfect $\mathrm q$-strategy witnessing 
$G(M,b)\cong_{\mathrm q} G(M,0)$ is \emph{not} a perfect 
$\mathrm q$-strategy for $\mathrm{HypIso}(G(M,b),G(M,0))$,
and hence the two nonlocal games are distinct.
\end{proof}

Recall \cite{bigalois} that two synchronous games $\cl G_1$ and $ \cl G_2$ are called \emph{$*$-equivalent} if there exist unital $*$-homomorphisms $ \pi : \cl A(\cl G_1) \to \cl A(\cl G_2)$ and $ \rho : \cl A(\cl G_2) \to \cl A(\cl G_1)$. A particular instance of $*$-equivalent games is when they have isomorphic game algebras (see \cite{harris2021}). This relation allows to deduce the existence of perfect strategies for the one game from the existence of perfect strategies for the other game and vice versa.

Thus, in Proposition  \ref{p:strict_incl} we saw that  $   \mathrm{Iso}(G_1,G_2)$ does not coincide with $\mathrm{HypIso}(G_1,G_2)$, in particular their game algebras are not canonically isomorphic. It would be still interesting to know however whether the two games are $*$-equivalent. In particular, we do not know if the graphs $ G_1, G_2$ in Proposition \ref{p:strict_incl} are $ \rm q$-isomorphic as hypergraphs via some other perfect strategy.  A similar question holds for $\mathrm{HypIso}(\cl N(G_1), \cl N(G_2))$ and $ \mathrm{Iso}(G_1,G_2)$.

 \section{Bi-Galois extensions} \label{sec_bigalois}

As it becomes apparent by Theorem \ref{prop:quantum_isomorphisms}, the  $*$-algebra $\cl A({\rm HypIso}(\Lambda_1, \Lambda_2))$ encodes quantum isomorphisms between the hypergraphs $\Lambda_{1}$ and $\Lambda_{2}$. We may thus also refer to it as the \emph{quantum isomorphism space}. 
We note that by Proposition \ref{prop:game-alg-presentation}, when $\Lambda_1=\Lambda_2:= \Lambda$, then $\cl A({\rm HypIso}(\Lambda, \Lambda))= \cl O(\Lambda)$, where $\cl O(\Lambda)$ denotes the Hopf $*$-algebra associated to the quantum hypergraph automorphism group.  For this reason we denote 

\[
\cl O(\Lambda_1,\Lambda_2):= \cl A({\rm HypIso}(\Lambda_1, \Lambda_2)).
\]

Our goal in this section is to establish the implication 
\(A^{*}\Rightarrow \mathrm{qc}\) in the chain of Figure \ref{fig:iso-implications}. The proof follows ideas from \cite{bigalois}.
It suffices to show that, whenever 
\(\mathcal{O}(\Lambda_{1},\Lambda_{2})\neq 0\), the algebra 
\(\mathcal{O}(\Lambda_{1},\Lambda_{2})\) admits a left (resp. right)-invariant state. 
Indeed, in this case \(\mathcal{O}(\Lambda_{1},\Lambda_{2})\) is an 
\(\mathcal{O}(\Lambda_{1})\)–\(\mathcal{O}(\Lambda_{2})\) bi-Galois extension, 
and it will follow from \cite{bichon99,bichon-rijdt-vaes} 
 (see also Theorem~3.15 in \cite{bigalois}) that it admits a non-zero $*$-representation  as bounded operators on a Hilbert space.

\par We begin by showing that the Hopf $*$-algebra $\cl{O}(\Lambda_1)$ associated to the quantum automorphism group of $\Lambda_1$ has a left *-comodule algebra structure.

\begin{proposition}\label{p_delta_comod}
Let $\Lambda_i = (V_i, E_i)$, $i=1,2$, be hypergraphs, and $ \cl O(\Lambda_1,\Lambda_2) \neq \{0\}$. Then the map
\[
\delta := \delta_V \oplus \delta_E : \mathcal{O}(\Lambda_1, \Lambda_2) \to \mathcal{O}(\Lambda_1) \otimes \mathcal{O}(\Lambda_1, \Lambda_2)
\]
where
\[
\delta_V: \mathcal{O}(\Lambda_1, \Lambda_2) \to \mathcal{O}(\Lambda_1) \otimes \mathcal{O}(\Lambda_1, \Lambda_2), \quad
\delta_V(p_{v_1,v_2}) := \sum_{w_1 \in V_1} u_{v_1,w_1} \otimes p_{w_1,v_2},
\]
and
\[
\delta_E: \mathcal{O}(\Lambda_1, \Lambda_2) \to \mathcal{O}(\Lambda_1) \otimes \mathcal{O}(\Lambda_1, \Lambda_2), \quad
\delta_E(p_{e_1,e_2}) := \sum_{g_1 \in E_1} u_{e_1,g_1} \otimes p_{g_1,e_2}.
\]
defines a left $*$-comodule algebra structure over $\mathcal{O}(\Lambda_1)$.
\end{proposition}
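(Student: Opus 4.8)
The plan is to obtain $\delta$ from the universal property of $\mathcal{O}(\Lambda_1,\Lambda_2)$ recorded in Proposition~\ref{prop:game-alg-presentation}, and then to check the two comodule axioms on the generators. The key observation is that the proposed images assemble into the matrices
\[
\widetilde{P}_V := (u_V\otimes 1)(1\otimes P_V),\qquad
\widetilde{P}_E := (u_E\otimes 1)(1\otimes P_E),
\]
whose $(v_1,v_2)$- and $(e_1,e_2)$-entries are precisely $\delta_V(p_{v_1,v_2})=\sum_{w_1}u_{v_1,w_1}\otimes p_{w_1,v_2}$ and $\delta_E(p_{e_1,e_2})=\sum_{g_1}u_{e_1,g_1}\otimes p_{g_1,e_2}$, regarded as elements of $\mathcal{O}(\Lambda_1)\otimes\mathcal{O}(\Lambda_1,\Lambda_2)$. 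By Proposition~\ref{prop:game-alg-presentation} it then suffices to verify that $\widetilde{P}_V,\widetilde{P}_E$ are magic unitaries satisfying the incidence intertwining $A_{\Lambda_1}\widetilde{P}_E=\widetilde{P}_V A_{\Lambda_2}$; this produces $\delta=\delta_V\oplus\delta_E$ as a unital $*$-homomorphism.

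First I would establish well-definedness. That $\widetilde{P}_V$ (and likewise $\widetilde{P}_E$) is a magic unitary is the standard fact that a ``product'' of two magic unitaries is again a magic unitary. Since multiplication in $\mathcal{O}(\Lambda_1)\otimes\mathcal{O}(\Lambda_1,\Lambda_2)$ keeps the two tensor legs separate, self-adjointness and idempotency of each entry follow from the row-orthogonality relations of $u_V$ together with the column-orthogonality relations of $P_V$, while the row/column sums and the orthogonality of distinct rows/columns of $\widetilde{P}_V$ reduce to the corresponding relations for $u_V$ and $P_V$. The intertwining relation is then a short computation in which the scalar matrix $A_{\Lambda_1}$ is pushed through the two legs: using $A_{\Lambda_1}u_E=u_V A_{\Lambda_1}$ in $\mathcal{O}(\Lambda_1)$ and $A_{\Lambda_1}P_E=P_V A_{\Lambda_2}$ in $\mathcal{O}(\Lambda_1,\Lambda_2)$ one obtains
\[
A_{\Lambda_1}\widetilde{P}_E=(u_V\otimes 1)\,A_{\Lambda_1}(1\otimes P_E)=(u_V\otimes 1)(1\otimes P_V)A_{\Lambda_2}=\widetilde{P}_V A_{\Lambda_2}.
\]

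It remains to verify the two comodule axioms, both of which reduce to checks on generators since every map involved is a $*$-homomorphism. The counit identity $(\varepsilon\otimes\mathrm{id})\circ\delta=\mathrm{id}$ is immediate from $\varepsilon(u_{v_1,w_1})=\delta_{v_1,w_1}$ (Remark~\ref{r_hopf_alg}), which collapses $\sum_{w_1}\varepsilon(u_{v_1,w_1})\,p_{w_1,v_2}$ to $p_{v_1,v_2}$, and analogously on edge generators. For coassociativity $(\mathrm{id}\otimes\delta)\circ\delta=(\Delta\otimes\mathrm{id})\circ\delta$, applying both sides to $p_{v_1,v_2}$ yields a double sum over $V_1\times V_1$: the left-hand side applies $\delta_V$ a second time, the right-hand side uses $\Delta(u_{v_1,w_1})=\sum_{z_1}u_{v_1,z_1}\otimes u_{z_1,w_1}$, and after relabeling the two summation indices the resulting triple tensors agree; the edge generators are handled identically. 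The only delicate point is the well-definedness in the first step: one must confirm that $\widetilde{P}_V$ satisfies the \emph{full} set of magic-unitary relations, and this is precisely where the interplay between row-orthogonality on the $\mathcal{O}(\Lambda_1)$ leg and column-orthogonality on the $\mathcal{O}(\Lambda_1,\Lambda_2)$ leg is essential.
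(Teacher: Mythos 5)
Your proposal is correct and follows essentially the same route as the paper's proof: both invoke the universal property from Proposition~\ref{prop:game-alg-presentation} by exhibiting the proposed images as a pair of magic unitaries satisfying the incidence intertwining, and then verify counitality and coassociativity on generators. The only (harmless) difference is presentational: you verify the intertwining at the matrix level via $\widetilde{P}_V=(u_V\otimes 1)(1\otimes P_V)$, $\widetilde{P}_E=(u_E\otimes 1)(1\otimes P_E)$, whereas the paper carries out the same computation entrywise using Lemma~\ref{l_inclusion_rel}; your packaging is arguably cleaner, and your explicit treatment of the magic-unitary relations fills in what the paper dismisses as ``straightforward.''
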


\begin{proof}
It is straightforward that
\[
\delta_V(P_{V}) := (\delta_V(p_{v_1,v_2}))_{v_1 \in V_1, v_2 \in V_2} \in M_{|V_1|, |V_2|}(\mathcal{O}(\Lambda_1) \otimes \mathcal{O}(\Lambda_1, \Lambda_2)),
\]
\[
\delta_E(P_{E}) := (\delta_E(p_{e_1,e_2}))_{e_1 \in E_1, e_2 \in E_2} \in M_{|E_1|, |E_2|}(\mathcal{O}(\Lambda_1) \otimes \mathcal{O}(\Lambda_1, \Lambda_2)),
\]
are quantum permutation matrices.
Next, using the following intertwining relations for $u_V$, $u_E$ and $P_{V}$, $P_{E}$
\[
A_{\Lambda_1} u_E = u_V A_{\Lambda_1}, \quad A_{\Lambda_1} P_{E} = P_{V} A_{\Lambda_2},
\]
we show that 
$$(A_{\Lambda_1}\otimes1)\delta_E(P_{E})=\delta_V(P_{V})(1\otimes A_{\Lambda_2})$$
by equivalently showing that 
\[
\sum_{e_1\in E_1, v_1\in e_1} \sum_{g_1\in E_1} u_{e_1,g_1} \otimes p_{g_1,f_2}  = \sum_{v_2\in V_2, v_2 \in f_2} \sum_{w_1\in V_1} u_{v_1,w_1} \otimes p_{w_1,v_2},
\]
due to Lemma \ref{l_inclusion_rel}.
We compute
\begin{align*}
\sum_{e_1\in E_1, v_1\in e_1} \sum_{g_1\in E_1} u_{e_1,g_1} \otimes p_{g_1,f_2}
&=  \sum_{g_1\in E_1} \bigg(\sum_{e_1\in E_1, v_1\in e_1}u_{e_1,g_1}\bigg) \otimes p_{g_1,f_2}\\ &
= \sum_{g_1\in E_1} \bigg(\sum_{w_1\in V_1, w_1\in g_1}u_{v_1,w_1}\bigg) \otimes p_{g_1,f_2}\\ & =\sum_{w_1\in V_1} u_{v_1,w_1} \otimes \bigg(\sum_{g_1\in E_1,w_1\in g_1} p_{g_1,f_2}\bigg)\\
&=\sum_{w_1\in V_1} u_{v_1,w_1} \otimes \bigg(\sum_{v_2 \in V_2,v_2\in f_2} p_{w_1,v_2}\bigg)\\
&=\sum_{v_2\in V_2, v_2 \in f_2} \sum_{w_1\in V_1} u_{v_1,w_1} \otimes p_{w_1,v_2}.
\end{align*}
Thus, the incidence matrix intertwining is preserved by the coactions. By the universality of the unital $*$-algebra $\cl O(\Lambda_1,\Lambda_2)$ we conclude that $\delta$ defines a unital *-homomorphism.
\par For coassociativity, note that for any vertex generator $p_{v_1,v_2}$, 
\begin{align*}
((\Delta \otimes \mathrm{id}) \circ \delta)(p_{v_1,v_2}) 
&= (\Delta \otimes \mathrm{id}) \left( \sum_{w_1 \in V_1} u_{v_1,w_1} \otimes p_{w_1,v_2} \right) \\
&= \sum_{w_1 \in V_1} \Delta(u_{v_1,w_1}) \otimes p_{w_1,v_2} \\
&= \sum_{w_1 \in V_1} \left( \sum_{x_1 \in V_1} u_{v_1,x_1} \otimes u_{x_1,w_1} \right) \otimes p_{w_1,v_2} \\
&= \sum_{x_1,w_1 \in V_1} u_{v_1,x_1} \otimes u_{x_1,w_1} \otimes p_{w_1,v_2}.
\end{align*}
On the other hand,
\begin{align*}
((\mathrm{id} \otimes \delta) \circ \delta)(p_{v_1,v_2}) 
&= (\mathrm{id} \otimes \delta) \left( \sum_{w_1 \in V_1} u_{v_1,w_1} \otimes p_{w_1,v_2} \right) \\
&= \sum_{w_1 \in V_1} u_{v_1,w_1} \otimes \delta(p_{w_1,v_2}) \\
&= \sum_{w_1 \in V_1} u_{v_1,w_1} \otimes \left( \sum_{y_1 \in V_1} u_{w_1,y_1} \otimes p_{y_1,v_2} \right)\\
&= \sum_{w_1,y_1 \in V_1} u_{v_1,w_1} \otimes u_{w_1,y_1} \otimes p_{y_1,v_2}.
\end{align*}
Renaming indices $(x_1,w_1) \leftrightarrow (w_1,y_1)$ shows that the two expressions are equal. The argument for edge generators $p_{e_1,e_2}$ is identical, using the coproduct properties of $u_E = (u_{e_1,g_1})$. For counitality note that on vertex generators,
\[
\begin{aligned}
((\varepsilon \otimes \mathrm{id}) \circ \delta)(p_{v_1,v_2}) 
&= (\varepsilon \otimes \mathrm{id}) \left( \sum_{w_1 \in V_1} u_{v_1,w_1} \otimes p_{w_1,v_2} \right) \\
&= \sum_{w_1 \in V_1} \varepsilon(u_{v_1,w_1}) p_{w_1,v_2}= \sum_{w_1 \in V_1} \delta_{v_1,w_1} p_{w_1,v_2} = p_{v_1,v_2},
\end{aligned}
\]
for any $v_1 \in V_{1}, v_2 \in V_{2}$. The same holds for the edge generators \(p_{e_1,e_2}\), $e_1 \in E_{1}, e_2 \in E_{2}$. 

Since \(\delta\) is a *-homomorphism extending from these generators, both co-associativity and co-unitality hold on the entire algebra \(\mathcal{O}(\Lambda_1, \Lambda_2)\).
\end{proof}

\begin{remark}\label{rm_gamma_comod} \rm
Using analogous arguments, one can define a right coaction
\[
\gamma: \mathcal{O}(\Lambda_1, \Lambda_2) \to \mathcal{O}(\Lambda_1, \Lambda_2) \otimes \mathcal{O}(\Lambda_2)
\]
on the generators by
\[
\gamma(p_{v_1,v_2}) := \sum_{w_2 \in V_2} p_{v_1,w_2} \otimes u_{w_2,v_2}', \quad
\gamma(p_{e_1,e_2}) := \sum_{f_2 \in E_2} p_{e_1,f_2} \otimes u_{f_2,e_2}',
\]
where \(u' = (u_{w_2,v_2}')\) and \(u'_E = (u_{f_2,e_2}')\) denote the quantum permutation matrices generating \(\mathcal{O}(\Lambda_2)\), and show that it defines a right \(*\)-comodule algebra structure of \(\mathcal{O}(\Lambda_1, \Lambda_2)\) over \(\mathcal{O}(\Lambda_2)\), compatible with the incidence matrix relations.
\end{remark}

\begin{theorem} \label{th_bigalois_ext}
If  \(\mathcal{O}(\Lambda_1, \Lambda_2) \neq 0\),  then \(\mathcal{O}(\Lambda_1, \Lambda_2)\) is a \(\mathcal{O}(\Lambda_1)\)--\(\mathcal{O}(\Lambda_2)\) bi-Galois extension via the left coaction 
    \[
    \delta(p_{v_1,v_2}) = \sum_{w_1 \in V_1} u_{v_1,w_1} \otimes p_{w_1,v_2}, \quad \delta(p_{e_1,e_2}) = \sum_{d_1 \in E_1} u_{e_1,d_1} \otimes p_{d_1,e_2},
    \]
and the right coaction
    \[
    \gamma(p_{v_1,v_2}) = \sum_{w_2 \in V_2} p_{v_1,w_2} \otimes v_{w_2,v_2}, \quad \gamma(p_{e_1,e_2}) = \sum_{f_2 \in E_2} p_{e_1,f_2} \otimes v_{f_2,e_2}.
    \]
\end{theorem}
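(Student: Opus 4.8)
Since Proposition~\ref{p_delta_comod} and Remark~\ref{rm_gamma_comod} already exhibit $\mathcal O(\Lambda_1,\Lambda_2)$ as a left $\mathcal O(\Lambda_1)$-$*$-comodule algebra via $\delta$ and a right $\mathcal O(\Lambda_2)$-$*$-comodule algebra via $\gamma$, conditions (i) and (ii) in the definition of a bi-Galois extension are in place. What remains is (iii) that the coactions commute, and (iv) that the canonical maps $\kappa_\delta,\kappa_\gamma$ are bijective. The plan is to dispatch (iii) by a computation on generators, and to obtain (iv) by constructing explicit inverses from a \emph{translation map}, whose well-definedness is the only substantial point.

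For (iii) I would evaluate both $(\delta\otimes\mathrm{id})\circ\gamma$ and $(\mathrm{id}\otimes\gamma)\circ\delta$ on the generators. On a vertex generator both composites collapse to
\[
\sum_{x_1\in V_1}\ \sum_{w_2\in V_2} u_{v_1,x_1}\otimes p_{x_1,w_2}\otimes v_{w_2,v_2},
\]
and the identical computation works on edge generators. As both maps are unital $*$-homomorphisms agreeing on a generating set, they coincide, which gives (iii).

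The heart of the matter is (iv). I would first record the key identity coming from the magic-unitary relations of $P_V$: using column orthogonality $p_{x_1,v_2}p_{w_1,v_2}=\delta_{x_1,w_1}p_{w_1,v_2}$ and the row partition $\sum_{v_2}p_{w_1,v_2}=1$, one gets $\sum_{v_2\in V_2}\kappa_\delta\bigl(p_{v_1,v_2}\otimes p_{w_1,v_2}\bigr)=u_{v_1,w_1}\otimes 1$, and likewise $\sum_{e_2}\kappa_\delta(p_{e_1,e_2}\otimes p_{g_1,e_2})=u_{e_1,g_1}\otimes 1$. This dictates the candidate translation map $\tau\colon \mathcal O(\Lambda_1)\to \mathcal O(\Lambda_1,\Lambda_2)\otimes \mathcal O(\Lambda_1,\Lambda_2)^{\mathrm{op}}$ given on generators by
\[
\tau(u_{v_1,w_1}):=\sum_{v_2\in V_2}p_{v_1,v_2}\otimes p_{w_1,v_2},
\qquad
\tau(u_{e_1,g_1}):=\sum_{e_2\in E_2}p_{e_1,e_2}\otimes p_{g_1,e_2}.
\]
The crucial step—\textbf{and the main obstacle}—is to show $\tau$ is well defined, i.e.\ that it respects the defining relations of $\mathcal O(\Lambda_1)$. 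Since $\mathcal O(\Lambda_1,\Lambda_2)\otimes \mathcal O(\Lambda_1,\Lambda_2)^{\mathrm{op}}$ is an algebra (the opposite structure on the second leg converting the twisted product $(a\otimes b)(c\otimes d)=ac\otimes db$ into ordinary multiplication), the universal property of $\mathcal O(\Lambda_1)$ reduces this to checking that the matrices $(\tau(u_{v_1,w_1}))$ and $(\tau(u_{e_1,g_1}))$ are magic unitaries satisfying the intertwiner $A_{\Lambda_1}\,\tau(u_E)=\tau(u_V)\,A_{\Lambda_1}$. The magic-unitary conditions follow from the row/column orthogonality and partitions of $P_V,P_E$; the intertwining relation is the delicate verification, and is precisely where the incidence structure enters, being derived from $A_{\Lambda_1}P_E=P_VA_{\Lambda_2}$ via Lemma~\ref{l_inclusion_rel} (the product $\tau(u_{v_1,w_1})\tau(u_{e_1,g_1})$ splits into a first leg killed by one incidence mismatch and a second leg killed by the complementary one).

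Granting well-definedness of $\tau$, I would set $\kappa_\delta^{-1}(a\otimes z):=\tau(a)\,(1\otimes z)$, a right $\mathcal O(\Lambda_1,\Lambda_2)$-linear map. Right $\mathcal O(\Lambda_1,\Lambda_2)$-linearity of $\kappa_\delta$ together with the identity above gives $\kappa_\delta\circ\kappa_\delta^{-1}=\mathrm{id}$ on $\mathcal O(\Lambda_1)\otimes 1$, hence everywhere; and the translation property $\sum_i\tau(a_i)(1\otimes z_i)=z\otimes 1$ whenever $\delta(z)=\sum_i a_i\otimes z_i$ (checked on generators by the same orthogonality relations) yields $\kappa_\delta^{-1}\circ\kappa_\delta=\mathrm{id}$. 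Thus $\kappa_\delta$ is bijective. The bijectivity of $\kappa_\gamma$ follows by the symmetric argument interchanging $\Lambda_1,\Lambda_2$ and left/right, using the translation map $\tau'(v_{w_2,v_2})=\sum_{v_1\in V_1}p_{v_1,w_2}\otimes p_{v_1,v_2}$, whose well-definedness rests on the same Lemma~\ref{l_inclusion_rel} computation. Combining (i)--(iv) shows that $\mathcal O(\Lambda_1,\Lambda_2)$ is an $\mathcal O(\Lambda_1)$--$\mathcal O(\Lambda_2)$ bi-Galois extension.
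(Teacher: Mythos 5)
Your proposal is correct and follows essentially the same route as the paper's proof: conditions (i)--(ii) are imported from Proposition~\ref{p_delta_comod} and Remark~\ref{rm_gamma_comod}, commutation of the coactions is the same generator computation, and bijectivity of $\kappa_\delta,\kappa_\gamma$ is obtained from an explicit inverse whose value on generators, $\sum_{v_2}p_{v_1,v_2}\otimes p_{w_1,v_2}$, is exactly the paper's map $\alpha$ (the paper's inverse $\eta_\delta=(\mathrm{id}\otimes m)\circ(\alpha\otimes\mathrm{id})$ coincides with your $a\otimes z\mapsto\tau(a)(1\otimes z)$), with well-definedness resting, as you say, on the universal property, the magic-unitary relations, and Lemma~\ref{l_inclusion_rel}. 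The one substantive difference is the codomain of the translation map: you land $\tau$ in $\mathcal O(\Lambda_1,\Lambda_2)\otimes\mathcal O(\Lambda_1,\Lambda_2)^{\mathrm{op}}$, whereas the paper declares $\alpha$ to be a $*$-homomorphism into the ordinary tensor product. Both prescriptions are well defined here (the orthogonality relations of $P_V,P_E$ make the magic-unitary and intertwining relations hold under either multiplication), but your convention is the technically stronger one, and this is worth keeping. Since $\kappa_\delta$ and its candidate inverse are merely right $\mathcal O(\Lambda_1,\Lambda_2)$-module maps, verifying the two composites on generator-by-generator elementary tensors does not by itself give the identities on all of $\mathcal O(\Lambda_1)\otimes\mathcal O(\Lambda_1,\Lambda_2)$: one must propagate from generators $u_{v_1,w_1}$ to arbitrary words $ab\cdots$, and it is precisely the $\mathrm{op}$-multiplicativity $\tau(ab)=\sum a_{[1]}b_{[1]}\otimes b_{[2]}a_{[2]}$ that closes this induction, because the factor $b\otimes 1$ produced by the inner sum commutes with $1\otimes a_{[2]}$ and can be pulled out of the expression $\kappa_\delta(\tau(ab))$. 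With the ordinary-product convention ($\alpha(ab)=\sum a_{[1]}b_{[1]}\otimes a_{[2]}b_{[2]}$, which differs from $\tau(ab)$ on words of length $\ge 2$) the analogous factor is trapped between noncommuting terms and the induction does not go through verbatim; the paper's verification, which checks only elementary tensors with a single generator in the first leg, leaves exactly this extension implicit. So your write-up is not only correct but, at the crucial point (iv), slightly more careful than the paper's; the only thing to tighten is to state explicitly that the translation property $\sum_i\tau(a_i)(1\otimes z_i)=z\otimes1$ is needed for all $z$, not just generators, which follows by the same linearity-plus-$\mathrm{op}$ induction.
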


\begin{proof}
First note that by Proposition \ref{p_delta_comod} and Remark \ref{rm_gamma_comod}, $\delta$ (resp. $\gamma$) defines a left (resp. right) *-comodule algebra structure over $(\cl{O}(\Lambda_1)$ (resp. $\cl{O}(\Lambda_2))$. To conclude the proof we must check that the coactions commute, and canonical maps $\kappa_{\delta}, \kappa_{\gamma}$  are bijections.

To show the coactions commute: direct computation yields
\[
(\delta \otimes \mathrm{id}) \circ \gamma(p_{v_1,v_2}) = \sum_{w_2, w_1} u_{v_1,w_1} \otimes p_{w_1,w_2} \otimes v_{w_2,v_2},
\]
\[
(\mathrm{id} \otimes \gamma) \circ \delta(p_{v_1,v_2}) = \sum_{w_1,w_2} u_{v_1,w_1} \otimes p_{w_1,w_2} \otimes v_{w_2,v_2}.
\]
Indeed, we verify the identity on the generator \(p_{v_1,v_2}\). First, compute the left-hand side:
\[
\begin{aligned}
(\delta \otimes \mathrm{id}) \circ \gamma(p_{v_1,v_2}) 
&= (\delta \otimes \mathrm{id})\left( \sum_{w_2 \in V_2} p_{v_1,w_2} \otimes v_{w_2,v_2} \right) = \sum_{w_2 \in V_2} \delta(p_{v_1,w_2}) \otimes v_{w_2,v_2} \\
&= \sum_{w_2 \in V_2} \sum_{w_1 \in V_1} u_{v_1,w_1} \otimes p_{w_1,w_2} \otimes v_{w_2,v_2} = \sum_{w_1 \in V_1} \sum_{w_2 \in V_2} u_{v_1,w_1} \otimes p_{w_1,w_2} \otimes v_{w_2,v_2}.
\end{aligned}
\]
Now compute the right-hand side:
\[
\begin{aligned}
(\mathrm{id} \otimes \gamma) \circ \delta(p_{v_1,v_2}) 
&= (\mathrm{id} \otimes \gamma)\left( \sum_{w_1 \in V_1} u_{v_1,w_1} \otimes p_{w_1,v_2} \right) = \sum_{w_1 \in V_1} u_{v_1,w_1} \otimes \gamma(p_{w_1,v_2}) \\
&= \sum_{w_1 \in V_1} u_{v_1,w_1} \otimes \sum_{w_2 \in V_2} p_{w_1,w_2} \otimes v_{w_2,v_2} = \sum_{w_1 \in V_1} \sum_{w_2 \in V_2} u_{v_1,w_1} \otimes p_{w_1,w_2} \otimes v_{w_2,v_2}.
\end{aligned}
\]
Since both expressions agree, the coactions commute on the vertex generators. An analogous computation applies to the edge generators \(p_{e_1,e_2}\), establishing our claim.
\

To show the canonical maps \(\kappa_{\delta}\) and \(\kappa_{\gamma}\) are bijective, we define the maps
\[
\eta_\delta := (\mathrm{id} \otimes m) \circ (\alpha \otimes \mathrm{id}): \mathcal{O}(\Lambda_1) \otimes \mathcal{O}(\Lambda_1, \Lambda_2) \to \mathcal{O}(\Lambda_1, \Lambda_2) \otimes \mathcal{O}(\Lambda_1, \Lambda_2),
\]
where \(m: \mathcal{O}(\Lambda_1, \Lambda_2) \otimes \mathcal{O}(\Lambda_1, \Lambda_2) \to \mathcal{O}(\Lambda_1, \Lambda_2)\) is the multiplication map
and 
\[
\alpha : \mathcal{O}(\Lambda_1) \to \mathcal{O}(\Lambda_1, \Lambda_2) \otimes \mathcal{O}(\Lambda_1, \Lambda_2)
\]
is the unital $*$-homomorphism
\[
\alpha(u_{v_1, w_1})=\sum_{v_2\in V_2}p_{v_1, v_2}\otimes p_{w_1, v_2}^{}.
\]
Similarly, define
\[
\eta_{\gamma} := (m \otimes \mathrm{id}) \circ (\mathrm{id} \otimes \beta): \mathcal{O}(\Lambda_1, \Lambda_2) \otimes \mathcal{O}(\Lambda_2) \to \mathcal{O}(\Lambda_1, \Lambda_2) \otimes \mathcal{O}(\Lambda_1, \Lambda_2),
\]
\noindent where $\beta: \cl{O}(\Lambda_{2})\rightarrow \cl{O}(\Lambda_{1}, \Lambda_{2})\otimes \cl{O}(\Lambda_{1}, \Lambda_{2})$ is the map analogous to $\alpha$, but defined on the generators of $\cl{O}(\Lambda_{2})$. 

We will show that $\kappa_{\delta}$ and $\eta_{\delta}$ (resp. $k_r$ and $\eta_{\gamma}$) are mutual inverses. We verify the equalities on the generators indexed by the vertices; the case of edges follows similarly. For the left case, consider $u_{v_1,w_1} \in \mathcal{O}(\Lambda_1)$ and $p_{x_1,x_2} \in \mathcal{O}(\Lambda_1, \Lambda_2)$. Then:
\begin{align*}
    \eta_{\delta}(u_{v_1,w_1} \otimes p_{x_1,x_2}) &= (\mathrm{id} \otimes m) \circ (\alpha \otimes \mathrm{id})(u_{v_1,w_1} \otimes p_{x_1,x_2})\\
    &= \sum_{y_2\in V_2}p_{v_1,y_2} \otimes p_{w_1,y_2} p_{x_1,x_2}.
\end{align*}
Then:
\begin{align*}
    \kappa_{\delta}(\eta_{\delta}(u_{v_1,w_1} \otimes p_{x_1,x_2})) &= \sum_{y_2 \in V_2}\delta(p_{v_1,y_2})(1 \otimes p_{w_1,y_2}p_{x_1,x_2}) \\
    &= \sum_{y_1 \in V_1}\sum_{y_2 \in V_2}u_{v_1,y_1}\otimes p_{y_1,y_2}p_{w_1,y_2}p_{x_1,x_2} \\
    &=\sum_{y_1 \in V_1}\sum_{y_2 \in V_2}u_{v_1,y_1}\otimes \delta_{y_1,w_1} p_{y_1,y_2}p_{x_1,x_2}\\
    &= \sum_{y_2 \in V_2}u_{v_1,w_1}\otimes  p_{w_1,y_2}p_{x_1,x_2}= u_{v_1,w_1}\otimes p_{x_1,x_2}
\end{align*}
so $\kappa_{\delta} \circ \eta_{\delta} = \mathrm{id}$.

Now take $p_{v_1,v_2} \otimes p_{w_1,w_2}$:
\[
\kappa_{\delta}(p_{v_1,v_2} \otimes p_{w_1,w_2}) = \sum_{x_1 \in V_1} u_{v_1,x_1} \otimes p_{x_1,v_2}p_{w_1,w_2},
\]
Applying $\eta_{\delta}$ gives:
\begin{align*}
    \sum_{x_1 \in V_1} \eta_{\delta}(u_{v_1,x_1} \otimes p_{x_1,v_2}p_{w_1,w_2})&= \sum_{x_1 \in V_1} (\id \otimes m)(\alpha \otimes \id)(u_{v_1,x_1} \otimes p_{x_1,v_2}p_{w_1,w_2})\\
    &=\sum_{x_1 \in V_1} \sum_{y_2\in V_2} p_{v_1,y_2}\otimes p_{x_1,y_2}p_{x_1,v_2}p_{w_1,w_2} \\
    &= p_{v_1,v_2}\otimes p_{w_1,w_2}.
\end{align*}
Thus, $\eta_{\delta} \circ \kappa_{\delta} = \mathrm{id}$.

The right case is analogous. 
\end{proof}

Now, we show the existence of C*-representations of the quantum isomorphism space.

\begin{theorem}\label{t_nonzero_implies_trace}
Let \( \Lambda_1 \) and \( \Lambda_2 \) be two  hypergraphs, and suppose that the quantum isomorphism space is non-zero, i.e., \( \mathcal{O}(\Lambda_1, \Lambda_2) \neq 0 \). Then \( \mathcal{O}(\Lambda_1, \Lambda_2) \) admits a non-zero $*$-representation  as bounded operators on a Hilbert space. In particular, it admits a faithful bi-invariant tracial state.
\end{theorem}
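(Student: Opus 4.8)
The plan is to combine the bi-Galois structure established in Theorem~\ref{th_bigalois_ext} with the general theory of invariant states on Galois objects over compact quantum groups of Kac type, following \cite[Section~3]{bigalois}. Since the theorem assumes $\mathcal{O}(\Lambda_1,\Lambda_2)\neq 0$, Theorem~\ref{th_bigalois_ext} already provides that $\mathcal{O}(\Lambda_1,\Lambda_2)$ is an $\mathcal{O}(\Lambda_1)$--$\mathcal{O}(\Lambda_2)$ bi-Galois extension, with the coactions $\delta,\gamma$ and bijective canonical maps $\kappa_\delta,\kappa_\gamma$ already in hand. The remaining work is to produce a faithful bi-invariant trace and then pass to a Hilbert space representation via the GNS construction.

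First I would record that both $\mathcal{O}(\Lambda_1)$ and $\mathcal{O}(\Lambda_2)$ are CQG algebras of \emph{Kac type}. Indeed, the fundamental representation $u_V\oplus u_E$ of $\mathrm{Aut}^+(\Lambda_i)$ is a block-diagonal magic unitary on the set $V_i\sqcup E_i$, so $\mathrm{Aut}^+(\Lambda_i)$ is a quantum subgroup of the quantum symmetric group $S^+_{V_i\sqcup E_i}$. The Haar state of $S^+_{V_i\sqcup E_i}$ is tracial, and the Kac-type property passes to quantum subgroups; hence the Haar state $\varphi_{\Lambda_i}$ is a trace and the antipode satisfies $S^2=\mathrm{id}$ on $\mathcal{O}(\Lambda_i)$. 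This is precisely the structural hypothesis needed to control invariant functionals on the extension.

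Next I would invoke the structural result for Galois objects over Kac-type Hopf $*$-algebras, namely \cite{bichon99,bichon-rijdt-vaes} (see also \cite[Theorem~3.15]{bigalois}): a bi-Galois extension over Kac-type base Hopf $*$-algebras carries a bi-invariant state $\tau:\mathcal{O}(\Lambda_1,\Lambda_2)\to\mathbb{C}$ that is automatically \emph{faithful} and \emph{tracial}, the faithfulness and traciality being direct consequences of the Kac-type hypothesis established in the previous step. Finally, applying the GNS construction to the faithful trace $\tau$ yields a $*$-representation $\pi_\tau:\mathcal{O}(\Lambda_1,\Lambda_2)\to\mathcal{B}(H_\tau)$ on the GNS Hilbert space $H_\tau$; since $\tau$ is faithful, $\pi_\tau$ is injective and in particular non-zero. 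Together, $\tau$ and $\pi_\tau$ give both conclusions of the statement.

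The main obstacle I anticipate is not any single computation but rather verifying that the abstract hypotheses of the cited invariant-state theorem are genuinely met in our setting. Concretely, one must confirm the Kac-type property of $\mathcal{O}(\Lambda_1)$ and $\mathcal{O}(\Lambda_2)$ (resting on the magic-unitary structure of $u_V\oplus u_E$ and the traciality of the Haar state of quantum permutation groups) and check that the bijectivity of the canonical maps $\kappa_\delta,\kappa_\gamma$ from Theorem~\ref{th_bigalois_ext} is exactly the Galois condition required by \cite{bichon-rijdt-vaes,bigalois}. Once these compatibility points are secured, the existence of the faithful bi-invariant trace and the resulting non-zero $*$-representation follow immediately from the general machinery, completing the proof and thereby the implication $A^*\Rightarrow\mathrm{qc}$ of Figure~\ref{fig:iso-implications}.
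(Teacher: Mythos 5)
Your scaffolding (Kac type of $\cl O(\Lambda_i)$, GNS at the end) is sound, but the central step of your argument contains a genuine gap: you assert that a nonzero bi-Galois extension over Kac-type CQG Hopf $*$-algebras ``carries a bi-invariant state $\tau$ that is automatically faithful and tracial,'' citing \cite{bichon99,bichon-rijdt-vaes} and \cite[Theorem~3.15]{bigalois}. None of these results says that. Theorem~3.15 of \cite{bigalois} is an \emph{equivalence}: a bi-Galois extension admits a nonzero ${\rm C}^*$-representation if and only if it admits a faithful bi-invariant state; it does not produce either condition from nontriviality of the algebra alone. Likewise, \cite{bichon-rijdt-vaes} works in the ${\rm C}^*$-setting, where the invariant state is part of the data of an ergodic coaction. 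For a purely algebraic Galois object, cosemisimplicity of $\cl O(\Lambda_1)$ does give a canonical left-invariant \emph{functional} (the projection onto the trivial isotypic component, normalized at $1$), but \emph{positivity} of that functional is exactly the analytic crux, and the Kac hypothesis does not supply it: Kac type only upgrades a state, once you have one, to a trace (and faithfulness comes from invariance plus the Galois structure, not from Kac type). If the general statement you invoke were true, the main theorems of \cite{bigalois} and of Section~\ref{sec_bigalois} would be immediate corollaries of the bi-Galois structure theorem, and neither paper would need any further machinery.

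What the paper actually does to fill this hole uses more than the bi-Galois property of Theorem~\ref{th_bigalois_ext}: it exhibits $\cl O(\Lambda_1,\Lambda_2)$ as an \emph{equivariant quotient of the universal algebra} $A_u^0(n,m)$, with $n=|V_1|+|E_1|$, $m=|V_2|+|E_2|$. Concretely, the block matrices $u_{V_1}\oplus u_{E_1}$ and $P_{V}\oplus P_{E}$ induce surjections $\pi:A_u^0(n)\twoheadrightarrow \cl O(\Lambda_1)$ and $\sigma:A_u^0(n,m)\twoheadrightarrow \cl O(\Lambda_1,\Lambda_2)$, and one verifies the compatibility $(\pi\otimes\sigma)\circ\alpha_{n,m}=\delta\circ\sigma$ with the canonical coaction $\alpha_{n,m}$. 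It is this quotient picture that allows one to invoke \cite[Proposition~6.2.6]{bichon99} to obtain a left-invariant \emph{state}, and then \cite[Theorem~5.2.1]{bichon99} for the ${\rm C}^*$-representation; only afterwards is \cite[Theorem~3.15]{bigalois} used, for the equivalence yielding a faithful bi-invariant state, with Kac type (which the paper deduces from the antipode being $*$-preserving via \cite[Prop.~1.7.9]{Neshtuset}; your subgroup-of-$S^{+}_{V\sqcup E}$ argument works equally well) giving traciality. To repair your proof you must either supply this $A_u^0(n,m)$-equivariance argument, or give an independent proof of positivity of the invariant functional; the latter is not contained in the references you cite.
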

\begin{proof}
Let $\Lambda_1 = (V_1, E_1)$ and $\Lambda_2 = (V_2, E_2)$ be hypergraphs. Define $n = |V_1| + |E_1|$ and $m = |V_2| + |E_2|$. Consider the block matrix
\( 
u =u_{V_1} \oplus u_{E_1} \in M_n(\mathcal{O}(\Lambda_1)),
\)
where $u_{V_1}=(u_{v_1,w_1})$ and $u_{E_1}=(u_{e_1,f_1})$ are the generating quantum permutation matrices for vertices and edges. Since each block of $u$ is unitary, so is $u$, and by definition of $A_u^0(n)$, this yields a surjective Hopf $*$-algebra morphism
\[
\pi: A_u^0(n) \twoheadrightarrow \mathcal{O}(\Lambda_1),
\]
making $\cl O(\Lambda_1)$ a quantum subgroup of $A_u^0(n)$. We recall that $ A^0_{u}(n)$ (resp. $ A_{u}^0(n,m)$) is the  unital $*$-algebra generated by the entries of  an $n$ by $n$ (resp. $n $ by $ m$) matrix $u$ such that both $ u$ and $ \bar u$ are unitaries.

Similarly, define the block matrix
\(  P_{V} \oplus P_{E}
 \in M_{n \times m}(\mathcal{O}(\Lambda_1, \Lambda_2)),
\)
where quantum permutation matrices $P_{V} = (p_{v_1,v_2})_{v_1 \in V_1, v_2 \in V_2}$ and $P_{E} = (p_{e_1,e_2})_{e_1 \in E_1, e_2 \in E_2}$ satisfy the incidence compatibility relation
\[
A_{\Lambda_1} P_{E} = P_{V} A_{\Lambda_2}.
\]
Hence, there exists a surjective $*$-homomorphism
\[
\sigma: A_u^0(n,m) \twoheadrightarrow \mathcal{O}(\Lambda_1, \Lambda_2),
\]
sending the generators of $A_u^0(n,m)$ to the corresponding entries of $P_{V}\oplus P_E$. Denote by $\alpha_{n,m}$ the canonical left coaction
\[
\alpha_{n,m}: A_u^0(n,m) \to A_u^0(n) \otimes A_u^0(n,m)
\]
such that
\[
\alpha_{n,m}(u_{ij}) = \sum_{k=1}^{n} u^{(n)}_{ik} \otimes u_{kj}.
\]
We verify that
\begin{center}
\begin{tikzcd}[column sep=huge, row sep=large]
A_u^0(n,m) \arrow[r, "\sigma"] \arrow[d, "\alpha_{n,m}"'] & \mathcal{O}(\Lambda_1, \Lambda_2) \arrow[d, "\delta"] \\
A_u^0(n) \otimes A_u^0(n,m) \arrow[r, "\pi \otimes \sigma"'] & \mathcal{O}(\Lambda_1) \otimes \mathcal{O}(\Lambda_1, \Lambda_2)
\end{tikzcd}
\end{center}
where $\delta$ is the left coaction of $\mathcal{O}(\Lambda_1)$ on $\mathcal{O}(\Lambda_1, \Lambda_2)$, given on generators by
\[
\delta(p_{v_1v_2}) = \sum_{w_1 \in V_1} u_{v_1w_1} \otimes p_{w_1v_2}, \quad
\delta(p_{e_1e_2}) = \sum_{g_1 \in E_1} u_{e_1g_1} \otimes p_{g_1e_2}.
\]
Evaluating both sides on a generator $u_{ij} \in A_u^0(n,m)$, we compute:
\[
(\delta \circ \sigma)(u_{ij}) = \delta(p_{ij}) = \sum_{k=1}^{n} \pi(u^{(n)}_{ik}) \otimes p_{kj},
\]
\[
((\pi \otimes \sigma) \circ \alpha_{n,m})(u_{ij}) = \sum_{k=1}^{n} \pi(u^{(n)}_{ik}) \otimes \sigma(u_{kj}) = \sum_{k=1}^{n} \pi(u^{(n)}_{ik}) \otimes p_{kj}.
\]
Since, the identity holds on generators, and we have the desired result.
Hence, by \cite[Proposition 6.2.6]{bichon99} $ \cl O(\Lambda_1,\Lambda_2)$ admits a left-invariant state and by \cite[Theorem 5.2.1]{bichon99} it admits a unital $*$-representation into $\cl B(H)$ for some Hilbert space $H$. 
In particular we note that by \cite[Theorem 3.15]{bigalois} this is equivalent to the existence of a faithful bi-invariant state $\tau: \cl O(\Lambda_1,\Lambda_2) \to \bb C$. We note that $O(\Lambda_1)$ and $ \cl O(\Lambda_2)$ are of Kac type by virtue of \cite[Prop.\ 1.7.9]{Neshtuset} since the antipode $ S$ on the associated Hopf $*$-algebra is $*$-preserving. Thus, $\tau$ is moreover a trace.
\end{proof}

\begin{theorem}\label{t_qc_strats_iso_game}
    Let $\Lambda_{i} = (V_{i}, E_{i}), i = 1, 2$ be hypergraphs. The following are equivalent:
    \begin{enumerate} 
        \item[(i)] $\Lambda_1 \cong_{\rm qc} \Lambda_2$,
        \item[(ii)] $ \Lambda_1 \cong_{ {\rm C}^*} \Lambda_2$,
        \item[(iii)] $\Lambda_1 \cong_{ A^*} \Lambda_2 $.
    \end{enumerate}
\end{theorem}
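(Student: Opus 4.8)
The plan is to close the cycle $\mathrm{(i)}\Rightarrow\mathrm{(ii)}\Rightarrow\mathrm{(iii)}\Rightarrow\mathrm{(i)}$, in which the first two implications are formal and the third is precisely the implication $(\dagger)$ whose substance has already been packaged into Theorem~\ref{t_nonzero_implies_trace}.

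For $\mathrm{(i)}\Rightarrow\mathrm{(ii)}$ I would invoke Theorem~\ref{th:synchro_charact}(i): a perfect ${\rm qc}$-strategy supplies a unital $*$-homomorphism $\phi\colon \cl O(\Lambda_1,\Lambda_2)\to A$ into a unital ${\rm C}^*$-algebra $A$ carrying a faithful tracial state. Composing $\phi$ with any faithful representation of $A$ on a Hilbert space $H$ (for instance the GNS representation of that trace) yields a unital $*$-representation $\cl O(\Lambda_1,\Lambda_2)\to\cl B(H)$, which is exactly $\Lambda_1\cong_{{\rm C}^*}\Lambda_2$. The implication $\mathrm{(ii)}\Rightarrow\mathrm{(iii)}$ is immediate: a unital $*$-representation into $\cl B(H)$ with $H\neq\{0\}$ sends $1$ to $I_H\neq 0$, so $\cl O(\Lambda_1,\Lambda_2)\neq 0$, i.e.\ $\Lambda_1\cong_{A^*}\Lambda_2$.

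The core step is $\mathrm{(iii)}\Rightarrow\mathrm{(i)}$, i.e.\ $(\dagger)$. Assuming $\cl O(\Lambda_1,\Lambda_2)\neq 0$, Theorem~\ref{t_nonzero_implies_trace} --- built on the identification of $\cl O(\Lambda_1,\Lambda_2)$ as an $\cl O(\Lambda_1)$--$\cl O(\Lambda_2)$ bi-Galois extension (Theorem~\ref{th_bigalois_ext}) together with the invariant-state criterion of \cite{bichon99,bichon-rijdt-vaes} --- furnishes a faithful bi-invariant tracial state $\tau$ on $\cl O(\Lambda_1,\Lambda_2)$. To match the hypothesis of Theorem~\ref{th:synchro_charact}(i), I would run the GNS construction on $(\cl O(\Lambda_1,\Lambda_2),\tau)$: since $\tau$ is faithful the representation $\pi_\tau$ is faithful, and since $\tau$ is tracial its cyclic vector $\xi_\tau$ is a tracial vector that is separating for $M:=\pi_\tau(\cl O(\Lambda_1,\Lambda_2))''$. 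Hence $M$ is a finite von Neumann algebra and the vector state $x\mapsto\langle x\xi_\tau,\xi_\tau\rangle$ is a \emph{faithful} tracial state on $M$. The unital $*$-homomorphism $\pi_\tau\colon\cl O(\Lambda_1,\Lambda_2)\to M$ then satisfies the criterion of Theorem~\ref{th:synchro_charact}(i), producing a perfect ${\rm qc}$-strategy and establishing $\Lambda_1\cong_{\rm qc}\Lambda_2$.

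The only subtle point in this assembly is the passage from a faithful trace on the bare $*$-algebra to the ${\rm C}^*$-algebraic trace demanded by Theorem~\ref{th:synchro_charact}(i), which rests on the standard fact that a cyclic tracial GNS vector is automatically separating. All the genuine difficulty --- the bi-Galois structure and the existence of the invariant state --- has already been discharged in Theorem~\ref{t_nonzero_implies_trace}, so the present result is essentially a bookkeeping of these ingredients rather than a new obstacle.
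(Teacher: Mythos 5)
Your proposal is correct and follows essentially the same route as the paper: both prove the cycle by treating (i)$\Rightarrow$(ii)$\Rightarrow$(iii) as formal and reduce (iii)$\Rightarrow$(i) to Theorem~\ref{t_nonzero_implies_trace} followed by the GNS construction and the synchronous-game characterization of Theorem~\ref{th:synchro_charact}(i). The only (cosmetic) difference is that the paper completes $\pi_\tau(\cl O(\Lambda_1,\Lambda_2))$ in norm to get a ${\rm C}^*$-algebra with faithful trace, whereas you pass to the von Neumann algebra $\pi_\tau(\cl O(\Lambda_1,\Lambda_2))''$ and use the cyclic-tracial-vector-is-separating argument, which makes the faithfulness of the extended trace slightly more explicit.
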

\begin{proof}
 (i) $\Rightarrow$ (ii) is clear.
\noindent

(ii) $\Rightarrow$ (iii) is immediate.

   (iii) $\Rightarrow$ (i) Since $\Lambda_1 \cong_{ A^*} \Lambda_2 $ means by definition that $\cl{O}(\Lambda_{1}, \Lambda_{2}) \neq \{0\}$, then by Theorem \ref{t_nonzero_implies_trace} there exists a faithful bi-invariant tracial state $\tau: \cl{O}(\Lambda_{1}, \Lambda_{2})\rightarrow \bb{C}$. By \cite[Theorem 5.2.1]{bichon99}, the GNS representation of $\pi_{\tau}: \cl{O}(\Lambda_{1}, \Lambda_{2}) \rightarrow \cl{B}(H_{\tau})$ is well defined. Considering $\cl{A} := \overline{\pi_{\tau}(\cl{O}(\Lambda_{1}, \Lambda_{2}))}^{\|\cdot\|}$ (i.e., the closure of the image of the $*$-algebra $\cl{O}(\Lambda_{1}, \Lambda_{2})$ in $\cl{B}(H_{\tau})$), we have the existence of a ${\rm C}^{*}$-algebra endowed with a faithful tracial state. Furthermore, if we set $P_{V}$ and $P_{E}$ to be the images of the generating magic unitaries for $\cl{O}(\Lambda_{1}, \Lambda_{2})$ under $\pi_{\tau}$, by the definition of the quantum isomorphism space it is immediate that $P_{V}, P_{E}$ are quantum permutation matrices with entries in the ${\rm C}^{*}$-algebra satisfying the desired conditions.
\end{proof}

\begin{example}\label{ex_n_k}
\rm  For $n, k \in \bb{N}$ let $\Lambda_{n, k} := ([n], E_{k})$ be the hypergraph on $n$ vertices, where $E_{k}$ is $k$ distinct copies of $[n]$. We claim that for any hypergraph $\Lambda = (V, E)$, then $\cl{O}(\Lambda, \Lambda_{n, k}) \neq \{0\}$ if and only if $\Lambda \cong \Lambda_{n, k}$. One direction is trivial; for the other, assume towards contradiction that $\cl{O}(\Lambda, \Lambda_{n, k}) \neq \{0\}$ while $\Lambda \not \cong \Lambda_{n, k}$. Let $P_{V}, P_{E}$ be the magic unitaries generating $\cl{O}(\Lambda, \Lambda_{n, k})$, and denote by $J_{n, k}$ the $n\times k$ matrix with all $1$ entries. Note that $ A_{\Lambda_{n,k}}= J_{n,k}$ and by the intertwining relations, we have
\begin{gather*}
    A_{\Lambda}P_{E} = P_{V}J_{n, k} = J_{V, k}.
\end{gather*}
\noindent This means for any $v \in V, e \in E_{k}$ we have
\begin{gather}\label{eqn_summing_rel}
    \sum\limits_{e' \in E}(A_{\Lambda})_{v, e'}\cdot p_{e'e} = 1.
\end{gather}
\noindent For now, fix $v \in V, e \in E_{k}$; as $\Lambda \not \cong \Lambda_{n, k}$, there exists at least one $e' \in E$ such that $v \not \in e'$ (inside $\Lambda$). Using (\ref{eqn_summing_rel}) and orthogonality conditions on $P_E$, this implies $p_{e'e} = 0$ for $e \in E_{k}$. As our choice of $e \in E_{k}$ was arbitrary, this holds for every hyperedge in $E_{k}$; thus, for this specific $e' \in E$, $p_{e'e} = 0$ for all $e \in E_{k}$. However, as $P_{E}$ is a magic unitary, we have
\begin{gather*}
    1 = \sum\limits_{e \in E_{k}}p_{e'e} = \sum\limits_{e \in E_{k}}0 = 0,
\end{gather*}
\noindent a clear contradiction. Therefore, $\cl{O}(\Lambda, \Lambda_{n, k}) = \{0\}$. By Theorem \ref{t_qc_strats_iso_game}, this implies for ${\rm t} \in \{\rm loc, q, qa, qc\}$ we have $\Lambda \cong_{\rm t} \Lambda_{n, k}$ if and only if $\Lambda \cong \Lambda_{n, k}$. 
\end{example}


\section{The game isomorphism game and transfer of strategies}\label{s_nlg_trans}
In this section we switch notation to match the approach taken in \cite{ht_one}. A \emph{hypergraph} is a subset \(\Lambda\subseteq X\times A\) (finite sets \(X,A\)). Vertices are represented by the set \(X\), while edges are the subsets \(\{\Lambda (a):a\in A\}\) with \(\Lambda(a):=\{x\in X:(x,a)\in \Lambda \}\). The incidence matrix $A_{\Lambda}$ is defined as before, as an $X\times A$ matrix with entries in $\{0, 1\}$ where $(A_{\Lambda})_{x, a} = 1$ if and only if $(x, a) \in \Lambda$.  
\begin{remark}
\rm To connect the two presentations of hypergraphs, if we start with a subset $\Lambda \subseteq V\times E$ of finite sets $V$ and $E$, for each $e \in E$ define
\begin{gather*}
    E_{e} := \{v \in V: \; (v, e) \in \Lambda\}.
\end{gather*}
\noindent Then $\tilde{\Lambda} = (V, \{E_{e}\}_{e \in E})$ is a presentation of $\Lambda$ as in the previous sections. Similarly, for $\Lambda = (V, E)$ let
\begin{gather*}
    \tilde{\Lambda} := \{(v, e): \; v \in V, e \in E, v \in e\} \subseteq V\times E.
\end{gather*}
This shows that either presentation contains the same incidence relations for a given hypergraph $\Lambda$. 
\end{remark}

Let 
\(
\cl G_{i}=(X_i,Y_i;\,A_i,B_i,\lambda_i)\), \(   i=1,2,\)
be non-local games. Encode each game as a hypergraph of winning pairs:
\[
\Lambda_{\cl G_{i}}\ \subseteq\ (X_i\times Y_i)\times(A_i\times B_i),\qquad
\bigl((x_i,y_i),(a_i,b_i)\bigr)\in\Lambda_{\cl G_{i}}\iff \lambda_i(x_i,y_i,a_i,b_i)=1.
\]
We now consider the hypergraph isomorphism game as a ``game of games". Note that the questions and answers sets for the game of games are the sets \( (X_1 \times Y_1) \sqcup (X_2 \times Y_2) \sqcup  (A_1\times B_1) \sqcup (A_2 \times B_2)\).

\subsection{The NS game $*$-algebra}

Fix two non-local games \(\cl G_{i}=(X_i,Y_i;A_i,B_i,\lambda_i)\) with
\(|X_1 \times Y_1|=|X_2 \times Y_2|\),  \(|A_1\times B_1|=|A_2 \times B_2|\).
Write \(U_i:=X_i\times Y_i\) and \(W_i:=A_i\times B_i\).
Let \(\mathcal O(\Lambda_{\cl G_1}, \Lambda_{\cl G_2})\) be the quantum isomorphism space, that is, the  unital \(^*\)-algebra
generated by two magic unitaries
\[
P_U=\bigl(p^U_{(x_1,y_1),\,(x_2,y_2)}\bigr)_{U_1\times U_2},\qquad
P_W=\bigl(p^W_{(a_1,b_1),\,(a_2,b_2)}\bigr)_{W_1\times W_2},
\]
subject to the incidence intertwining relation
\begin{equation*}
A_{\Lambda_{1}}\,P_{W}\;=\;P_{U}\,A_{\Lambda_2},
\end{equation*}
where $ \Lambda_i= \Lambda_{\cl G_i}$. 

\begin{definition}\label{d:NS-quotient} \rm 
Let \(I_{\mathrm{NS}}\) be the \(^*\)-ideal in $ \mathcal O(\Lambda_{\cl G_1}, \Lambda_{\cl G_2})$ generated by the
operator no–signalling marginal equalities:
for the $U$-block (questions), require
\begin{align*}
\sum_{x_1\in X_1} p^U_{(x_1,y_1),\,(x_2,y_2)}
&=\sum_{x_1\in X_1} p^U_{(x_1,y_1),\,(x_2',y_2)}
&&(\forall\,y_1,y_2,\ x_2,x_2'),\\
\sum_{y_1\in Y_1} p^U_{(x_1,y_1),\,(x_2,y_2)}
&=\sum_{y_1\in Y_1} p^U_{(x_1,y_1),\,(x_2,y_2')} 
&&(\forall\,x_1,x_2,\ y_2,y_2'); 
\end{align*}
for the $W$-block (answers), require
\begin{align*}
\sum_{a_2\in A_2} p^W_{(a_1,b_1),\,(a_2,b_2)}
&=\sum_{a_2\in A_2} p^W_{(a_1',b_1),\,(a_2,b_2)}
&&(\forall\,b_1,b_2,\ a_1,a_1'), \\
\sum_{b_2\in B_2} p^W_{(a_1,b_1),\,(a_2,b_2)}
&=\sum_{b_2\in B_2} p^W_{(a_1,b_1'),\,(a_2,b_2)}
&&(\forall\,a_1,a_2,\ b_1,b_1'). 
\end{align*}
Set the NS–quotient
\[
\mathcal O_{\mathrm{NS}}(\cl G_1,\cl G_2)\ :=\ \mathcal O(\Lambda_{\cl G_1}, \Lambda_{\cl G_2})\big/ I_{\mathrm{NS}}.
\]
\end{definition}

\noindent These equalities make the following marginals well-defined:
\[
p^X_{x_1,x_2}:=\sum_{y_1} p^U_{(x_1,y_1),\,(x_2,y_2)},\quad
p^Y_{y_1,y_2}:=\sum_{x_1} p^U_{(x_1,y_1),\,(x_2,y_2)},
\]
\[
p^A_{a_1,a_2}:=\sum_{b_2} p^W_{(a_1,b_1),\,(a_2,b_2)},\quad
p^B_{b_1,b_2}:=\sum_{a_2} p^W_{(a_1,b_1),\,(a_2,b_2)},
\]
where the right-hand sides are independent of $y_2,x_2$ and $b_1,a_1$, respectively.

\begin{lemma}\label{l:NS-decomp}
In the NS-quotient algebra $\mathcal O_{\mathrm{NS}}(\cl G_1,\cl G_2)$, we have that $ (p^X_{x_1,x_2})_{x_1 \in X_1}$, $ (p^Y_{y_1,y_2})_{y_1\in Y_1}$, $ (p^A_{a_1,a_2})_{a_2\in A_2}$ and $ (p^B_{b_1,b_2})_{b_2 \in B_2}$ are PVM's for all $ x_2, y_2, a_1, b_1$ resp. and 
\[
p^U_{(x_1,y_1),\,(x_2,y_2)} = p^X_{x_1,x_2}\,p^Y_{y_1,y_2},
\qquad
p^W_{(a_1,b_1),\,(a_2,b_2)} = p^A_{a_1,a_2}\,p^B_{b_1,b_2}.
\]
If moreover, \(|X_1|=|X_2|\), \(|Y_1|=|Y_2|\), \(|A_1|=|A_2|\), \(|B_1|=|B_2|\), then
$ P_{X}=(p^X_{x_1,x_2})_{x_1,x_2}$, $ P_{Y}=(p^Y_{y_1,y_2})_{y_1,y_2}$, $ P_{A}=(p^A_{a_1,a_2})_{a_1,a_2}$ and $ P_{B}=(p^B_{b_1,b_2})_{b_1,b_2}$ are magic unitaries.
\end{lemma}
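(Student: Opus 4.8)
The plan is to read everything off two ingredients: the orthogonality relations of the magic unitaries $P_U,P_W$ and the no-signalling marginal identities imposed in the quotient. Recall that entries of $P_U$ lying in a common column are orthogonal projections and each column sums to $1$, while for $P_W$ the same holds with rows and columns interchanged.

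First I would establish the PVM claims. For fixed $(x_2,y_2)$ the projections $\{p^U_{(x_1,y_1),(x_2,y_2)}\}_{y_1}$ lie in one column of $P_U$, hence are mutually orthogonal, so $p^X_{x_1,x_2}$ is a projection; the $U$-marginal relation guarantees it is independent of the chosen $y_2$. Summing over $x_1$ recovers the full column sum $\sum_{(x_1,y_1)}p^U_{(x_1,y_1),(x_2,y_2)}=1$, and the orthogonality of $p^X_{x_1,x_2}$ and $p^X_{x_1',x_2}$ for $x_1\neq x_1'$ is immediate from column orthogonality; thus $(p^X_{x_1,x_2})_{x_1}$ is a PVM for each $x_2$, and $p^Y$ is identical (summing over $x_1$). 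The elements $p^A,p^B$ are treated dually, using row orthogonality of $P_W$, which is exactly why their PVMs run over the second index.

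Next I would prove the product decomposition, which is where the NS quotient is used essentially. Using the marginal relations to place both factors in the same column $(x_2,y_2)$, I expand $p^X_{x_1,x_2}\,p^Y_{y_1,y_2}$ as a double sum of products of $P_U$-entries in a common column; every cross term vanishes by column orthogonality and the surviving diagonal term collapses to $p^U_{(x_1,y_1),(x_2,y_2)}$. The $W$-identity is the row-dual of this.

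The main step — and the expected obstacle — is upgrading the one-sided PVMs to genuine magic unitaries, since a square matrix of projections with PVM columns need \emph{not} be a magic unitary in a general $*$-algebra. Here the cardinality hypothesis is indispensable. Writing $S_{x_1}:=\sum_{x_2}p^X_{x_1,x_2}$ and $T_{y_1}:=\sum_{y_2}p^Y_{y_1,y_2}$, the full row sum of $P_U$ together with the product decomposition factors as $S_{x_1}T_{y_1}=1$. Both elements are self-adjoint, so passing to adjoints gives $T_{y_1}S_{x_1}=1$; hence each is invertible, and comparing across indices forces all $S_{x_1}$ to coincide (and all $T_{y_1}$). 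Evaluating $\sum_{x_1,x_2}p^X_{x_1,x_2}$ in the two orders yields $|X_1|\,S=|X_2|\,1$, so $|X_1|=|X_2|$ gives $S=1$, whence $T=S^{-1}=1$. This immediately yields the row sum $\sum_{x_2}p^X_{x_1,x_2}=1$; for row orthogonality I would introduce the auxiliary projection $\bar p^X_{x_2,x_1}:=\sum_{y_2}p^U_{(x_1,y_1),(x_2,y_2)}$, which equals $p^X_{x_1,x_2}T_{y_1}=p^X_{x_1,x_2}$ and which, by row orthogonality of $P_U$, is manifestly a PVM over $x_2$. Thus $(p^X_{x_1,x_2})_{x_2}$ is a PVM and $P_X$ is a magic unitary; $P_Y$ is identical, and $P_A,P_B$ follow by the dual counting on the $W$-block (using $|A_1|=|A_2|$ and $|B_1|=|B_2|$). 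The crux throughout is that the cardinality equalities enter only through the global counting identity that pins the invertible self-adjoint constant $S$ to $1$; without them the column-PVM data alone would be insufficient.
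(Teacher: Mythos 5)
Your proposal is correct, and the first two claims are handled exactly as in the paper: the one-sided PVM structure follows from column (resp.\ row) orthogonality of $P_U$ (resp.\ $P_W$), and the factorizations $p^U_{(x_1,y_1),(x_2,y_2)}=p^X_{x_1,x_2}p^Y_{y_1,y_2}$ and $p^W_{(a_1,b_1),(a_2,b_2)}=p^A_{a_1,a_2}p^B_{b_1,b_2}$ are obtained by using the NS marginal relations to place both factors in a common column/row and collapsing the double sum by orthogonality.

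Where you genuinely diverge is the upgrade to magic unitaries, and your mechanism is different from the paper's. The paper obtains the missing row sums by an averaging trick: since $\sum_{y_1}p^U_{(x_1,y_1),(x_2,y_2)}$ is independent of $y_2$, it writes $\sum_{x_2}p^X_{x_1,x_2}=\tfrac{1}{|Y_2|}\sum_{x_2}\sum_{y_2'}\sum_{y_1}p^U_{(x_1,y_1),(x_2,y_2')}=\tfrac{|Y_1|}{|Y_2|}\,1$, and then proves row orthogonality of $P_X$ by inserting the resolutions $1=\sum_{y_2}p^Y_{y_1,y_2}$ on both sides of $p^X_{x_1,x_2}p^X_{x_1,x_2'}$ and using commutativity of the $p^X$- and $p^Y$-entries (which follows from self-adjointness of $p^U$) to reduce to orthogonality of entries in a common row of $P_U$. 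You instead factor the full row sum of $P_U$ as $S_{x_1}T_{y_1}=1$, use self-adjointness to get $T_{y_1}S_{x_1}=1$ as well, deduce that all $S_{x_1}$ and all $T_{y_1}$ coincide by uniqueness of two-sided inverses, and pin $S=T=1$ via the counting identity $|X_1|\,S=|X_2|\,1$; row orthogonality then comes essentially for free from the identification $p^X_{x_1,x_2}=\sum_{y_2}p^U_{(x_1,y_1),(x_2,y_2)}$, which exhibits the rows of $P_X$ as coarse-grainings of the rows of $P_U$ over disjoint index sets. Your route avoids the commutativity step entirely, isolates exactly where the cardinality hypothesis enters (turning the invertible self-adjoint constant $S$ into $1$), and uses only $|X_1|=|X_2|$ for the whole $U$-block, whereas the paper's averaging uses $|Y_1|=|Y_2|$ for $P_X$ and $|X_1|=|X_2|$ for $P_Y$. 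Both arguments are sound and of comparable length; the paper's is more computational per matrix, while yours is more structural and makes transparent why the conclusion fails without the square-ness assumption.
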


\begin{proof}
Selfadjointness, idempotency, orthogonality and partition of unity  follow immediately from the properties of the magic unitaries $P^U, P^W$.  
Row orthogonality of the magic unitary $P_U$ gives
\[
p^X_{x_1,x_2} p^Y_{y_1,y_2}
=\sum_{y_1',x_1'} p^U_{(x_1,y_1'),\,(x_2,y_2)}\,p^U_{(x_1',y_1),\,(x_2,y_2)}
= p^U_{(x_1,y_1),\,(x_2,y_2)},
\]
since all terms vanish except $(x_1',y_1')=(x_1,y_1)$; the reversed product follows by selfadjointness of the entries of $P^U$.
The $W$-block is analogous.

Now assume \(|X_1|=|X_2|\), \(|Y_1|=|Y_2|\), \(|A_1|=|A_2|\), \(|B_1|=|B_2|\). We show that the matrix $ P_{X}=(p^X_{x_1,x_2})_{x_1,x_2}$ is a magic unitary. The rest of them will follow by symmetry. Selfadjointness and idempotency is shown earlier. We check the row partition and column orthogonality. We compute
\begin{align*}
\sum_{x_2}p^X_{x_1,x_2}&= \sum_{x_2}\sum_{y_1} p^U_{(x_1,y_1),\,(x_2,y_2)}\\
&=\frac{1}{|Y_2|} \sum_{x_2} \sum_{y_2'} \sum_{y_1} p^U_{(x_1,y_1),\,(x_2,y_2')}\\
& = \frac{1}{|Y_2|} \cdot |Y_1|=1
\end{align*}
where the third equality follows from independence of the sum over $Y_1$ from variables $y_2$. Similar arguments show that the relations hold for the rest of the matrices.

 Fix $x_1$ and let $x_2 \neq x_2'$ and any $ y_1$. Then,  
\begin{align*}
p^{X}_{x_1,x_2}p^{X}_{x_1,x_2'} & =  (\sum_{y_2}p^{Y}_{y_1,y_2}) p^{X}_{x_1,x_2}p^{X}_{x_1,x_2'}( \sum_{y_2'}p^{Y}_{y_1,y_2'})\\
& =\sum_{y_2, y_2'}p^{X}_{x_1,x_2}p^{Y}_{y_1,y_2} p^{X}_{x_1,x_2'}p^{Y}_{y_1,y_2'} \\
& = \sum_{y_2, y_2'} p^{U}_{(x_1,y_1),(x_2,y_2)}p^{U}_{(x_1,y_1),(x_2',y_2')}=0
\end{align*}
since $(x_2,y_2) \neq (x_2',y_2')$ where we used commutativity for the entries of $P^{X}, P^{Y}$ and column  orthogonality of the entries of $P^{U}$. Arguments for $P^Y, P^A, P^B$ follow by symmetry.

\end{proof}

\subsection{Simulation paradigm}
Let $U_i$ and $W_i$ be finite sets, $i=1,2$. A no--signalling (NS) correlation on the
quadruple $(U_2,W_1,U_1,W_2)$ is positive trace preserving map (information channel)
\[
\Gamma:\ \cl D_{U_2\times W_1}\longrightarrow \cl D_{U_1\times W_2}
\]
for which the marginal channels are well-defined, i.e., 
\[
\sum_{w_2\in W_2} \langle \delta_{u_1} \otimes \delta_{w_2}, \Gamma(\delta_{u_2} \otimes \delta_{w_1'})  \rangle , \qquad 
\sum_{u_1\in U_1} \langle \delta_{u_1} \otimes \delta_{w_2}, \Gamma(\delta_{u_2'} \otimes \delta_{w_1})  \rangle 
\]
 are independent of the choices of
$w_1'\in W_1$ and $u_2'\in U_2$, respectively. We note that this is equivalent with the definition of a no--signalling correlation as a family of conditional probability distributions that we used throughout the paper via the correspondence  
\[
\Gamma(u_1,w_2\mid u_2,w_1 ):= \langle \delta_{u_1} \otimes \delta_{w_2}, \Gamma(\delta_{u_2} \otimes \delta_{w_1})  \rangle 
\]

We recall a few notions from \cite{ht_one}. 
Given an NS correlation $\Gamma$ on the quadruple $(U_2,W_1,U_1,W_2)$ and a channel 
$E:\cl D_{U_1}\to \cl D_{W_1}$, define the linear map
\[
\Gamma[E]:\cl D_{U_2}\longrightarrow \cl D_{W_2}
\]
by
\begin{equation}\label{eq:GammaE}
\Gamma[E](w_2\mid u_2)
=\sum_{u_1\in U_1} \sum_{w_1\in W_1}
\Gamma(u_1,w_2\mid u_2,w_1)\,E(w_1\mid u_1).
\tag{2}
\end{equation} 
We say that a channel $F:\cl D_{U_2}\to \cl D_{W_2}$ is \emph{simulated} by $E$ with the assistance 
of $\Gamma$ if $F=\Gamma[E]$; in this case we call $\Gamma$ a \emph{simulator}.
The simulation procedure is illustrated by
\begin{center}
\begin{tikzcd}[column sep=large, row sep=large]
U_1 \arrow[r, "E"] & W_1 \arrow[d, dashed] \\
U_2 \arrow[u, dashed] \arrow[r, "{\Gamma[E]}"] & W_2
\end{tikzcd}
\end{center}

\medskip

Now we revisit the notion of strongly no-signalling correlations introduced in \cite{ht_one} (see also \cite{Gage}) and we give a slightly different definition that fits our context. Set \(X=A:= (X_1 \times Y_1) \sqcup (X_2 \times Y_2) \sqcup  (A_1\times B_1) \sqcup (A_2 \times B_2)\). 

We define a \emph{strongly no--signalling (SNS) correlation}  $ \Gamma $ over the quadruple $(X,A,X,A)$ to be a no-signalling correlation 
\[
\Gamma=\bigl(\Gamma(u_1,w_2|u_2,w_1)\bigr)_{u_1,u_2\in X, w_1,w_2\in A }
\]
satisfying the following conditions:
\begin{align*}
\sum_{x_1\in X_1} \Gamma((x_1,y_1),w_2\mid (x_2,y_2),w_1) 
&= \sum_{x_1\in X_1} \Gamma((x_1,y_1),w_2\mid (x_2',y_2),w_1), 
\; x_2,x_2'\in X_2, w_1,w_2 \in A,\\
\sum_{y_1\in Y_1} \Gamma((x_1,y_1),w_2\mid(x_2,y_2),w_1) 
&= \sum_{y_1\in Y_1} \Gamma((x_1,y_1),w_2\mid (x_2,y_2'),w_1), 
\;\; y_2,y_2'\in Y_2, w_1,w_2 \in A\\[4pt]
\sum_{a_2\in A_2} \Gamma(u_1,(a_2,b_2)\mid u_2,(a_1,b_1)) 
&= \sum_{a_2\in A_2} \Gamma(u_1,(a_2,b_2)\mid u_2,(a_1',b_1)), 
\;\; a_1,a_1'\in A_1, u_1,u_2 \in X\\[4pt]
\sum_{b_2\in B_2} \Gamma(u_1,(a_2,b_2)\mid u_2,(a_1,b_1)) 
&= \sum_{b_2\in B_2} \Gamma(u_1,(a_2,b_2)\mid u_2,(a_1,b_1')), 
\;\; b_1,b_1'\in B_1, u_1,u_2 \in X.
\end{align*}
We denote the different classes of SNS correlations by $ \cl C_{\rm st}$, where $ \rm t= \{loc, q, qa, qc, ns \}$.

\medskip

We fix finite sets $X$ and $Y$, and set
$A = X$ and $B = Y$. Let $p=\bigl(p(a,b\mid x,y)\bigr)_{a,b\in A,\;x,y\in X}$ be a no--signalling (NS) correlation 
over the quadruple $(X,Y,X,Y)$. 
We say that $p$ is an \emph{NS bicorrelation} \cite{bhtt} if the transpose family
\[
p^{\ast} \;=\; \bigl(p(a,b\mid x,y)\bigr)_{x,y}, \quad {\;a,b\in X\times  Y},
\]
is a conditional distribution with inputs $(a,b)\in A\times B$ and 
outputs $(x,y)\in X\times Y$, is also an NS correlation.

\medskip

Let $\Gamma=\bigl(\Gamma(u_1,w_2|u_2,w_1)\bigr)_{u_1,u_2\in X, w_1,w_2\in A }$  be an SNS correlation over $(X,A,X,A)$.
We say that $\Gamma$ is an \emph{SNS bicorrelation}, if it is a bicorrelation and $ \Gamma^*$ is also SNS, that is, the symmetric strong marginal equalities hold.

\begin{remark} \rm  
In \cite{Gage, ht_one}, the authors defined  a strongly no-signalling (SNS) correlation as a no signalling correlation $ \Gamma$ over $(X_2\times Y_2,A_1\times B_1,X_1\times Y_1,A_2\times B_2)$ that satisfies the strong NS conditions. 
Our definition agrees for perfect strategies of the hypergraph isomorphism game by restricting to the specific blocks. Indeed, in our setting the question and answer alphabets are the large disjoint unions 
\( 
X=A=(X_1\times Y_1)\sqcup(X_2\times Y_2)\sqcup(A_1\times B_1)\sqcup(A_2\times B_2).
\) 
A perfect strategy for the hypergraph isomorphism game respects the swap/type rule, so whenever the inputs lie in 
$(X_1\times Y_1)$ and $(X_2\times Y_2)$ the outputs are forced into $(A_2\times B_2)$ and $(A_1\times B_1)$, with all 
other blocks occurring with probability zero. If we restrict our correlation to this support and simply relabel the 
coordinates, we obtain a correlation on 
\( 
(X_2\times Y_2,\;A_1\times B_1,\;X_1\times Y_1,\;A_2\times B_2),
\)
which is exactly the domain considered in \cite{Gage, ht_one}. Moreover, the strong no--signalling equalities we imposed 
reduce to theirs on this block. In a similar fashion, a perfect SNS bicorrelation reduces to a SNS bicorrelation in the sense of \cite{Gage, ht_one}. 
\end{remark}

We note that SNS correlations act as transporters in the following sense.
\begin{theorem}\cite[Theorem 6.4]{ht_one} \label{th:transportation}
Let $\Gamma$ be an SNS correlation over the quadruple $(X_2\times Y_2,A_1 \times B_1,X_1 \times Y_1,A_2 \times B_2)$
and let $\cl E$ be an NS correlation over $(X_1,Y_1,A_1,B_1)$. Then $\Gamma[\cl E]$ is NS correlation over $(X_2,Y_2,A_2,B_2)$ and,
\begin{enumerate}
  \item[(i)] if $\Gamma \in \mathcal{C}^{\mathrm{}}_{\mathrm{sqc}}$ and $\cl E \in \mathcal{C}^{\mathrm{}}_{\mathrm{qc}}$, then $\Gamma[\cl E] \in \mathcal{C}^{\mathrm{}}_{\mathrm{qc}}$;
  \item[(ii)] if $\Gamma \in \mathcal{C}^{\mathrm{}}_{\mathrm{sqa}}$ and $\cl E \in \mathcal{C}^{\mathrm{}}_{\mathrm{qa}}$, then $\Gamma[\cl E] \in \mathcal{C}^{\mathrm{}}_{\mathrm{qa}}$;
  \item[(iii)] if $\Gamma \in \mathcal{C}^{\mathrm{}}_{\mathrm{sq}}$ and $\cl E \in \mathcal{C}^{\mathrm{}}_{\mathrm{q}}$, then $\Gamma[\cl E] \in \mathcal{C}^{\mathrm{}}_{\mathrm{q}}$;
  \item[(iv)] if $\Gamma \in \mathcal{C}^{\mathrm{}}_{\mathrm{sloc}}$ and $\cl E \in \mathcal{C}^{\mathrm{}}_{\mathrm{loc}}$, then $\Gamma[\cl E] \in \mathcal{C}^{\mathrm{}}_{\mathrm{loc}}$.
\end{enumerate}
\end{theorem}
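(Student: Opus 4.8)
The plan is to prove the statement by passing to operator realizations of each correlation type and building an explicit realization of the composite $\Gamma[\cl E]$. The only genuinely combinatorial part is the preliminary assertion that $\Gamma[\cl E]$ is no-signalling over $(X_2,Y_2,A_2,B_2)$. Non-negativity and normalization of $\Gamma[\cl E]$ are immediate from \eqref{eq:GammaE}, using that the output marginal of $\Gamma$ on $(x_1,y_1)$ is independent of $(a_1,b_1)$ (ordinary no-signalling of $\Gamma$) together with $\sum_{a_1,b_1}\cl E(a_1,b_1\mid x_1,y_1)=1$. For the no-signalling marginals I would compute $\sum_{a_2}\Gamma[\cl E](a_2,b_2\mid x_2,y_2)$: the first strong equality on the $a$-block lets me sum over $a_2$ and kill the dependence on $a_1$; the no-signalling of $\cl E$ then lets me sum over $a_1$ and obtain a quantity independent of $x_1$; and finally the first strong equality on the $x$-block lets me sum over $x_1$ and remove the dependence on $x_2$. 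The other marginal is symmetric, using the $b$- and $y$-block strong equalities. This is precisely where the \emph{strong} no-signalling conditions (beyond plain no-signalling) are indispensable.

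For part (i) I would invoke the operator-level description of a strongly quantum-commuting correlation: there is a Hilbert space $H$, a unit vector $\zeta$, and four mutually commuting families of PVMs $\{P^{x_2}_{x_1}\}$, $\{Q^{y_2}_{y_1}\}$, $\{R^{a_1}_{a_2}\}$, $\{S^{b_1}_{b_2}\}$ on $H$ with
\[
\Gamma\bigl((x_1,y_1),(a_2,b_2)\mid(x_2,y_2),(a_1,b_1)\bigr) = \langle P^{x_2}_{x_1}Q^{y_2}_{y_1}R^{a_1}_{a_2}S^{b_1}_{b_2}\,\zeta,\,\zeta\rangle,
\]
which is the operator analogue of the factorization in Lemma~\ref{l:NS-decomp} and is exactly what the strong marginal equalities provide. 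Taking a quantum-commuting realization $(\cl H_E,\eta,\{E^{x_1}_{a_1}\},\{F^{y_1}_{b_1}\})$ of $\cl E$ with $E$ commuting with $F$, I would work on $H\otimes\cl H_E$ with state $\zeta\otimes\eta$ and set
\[
G^{x_2}_{a_2} := \sum_{x_1,a_1} P^{x_2}_{x_1}R^{a_1}_{a_2}\otimes E^{x_1}_{a_1}, \qquad H^{y_2}_{b_2} := \sum_{y_1,b_1} Q^{y_2}_{y_1}S^{b_1}_{b_2}\otimes F^{y_1}_{b_1}.
\]
Then $\{G^{x_2}_{a_2}\}_{a_2}$ and $\{H^{y_2}_{b_2}\}_{b_2}$ are POVMs (collapsing the $R$- and $E$-sums, resp.\ the $S$- and $F$-sums, to the identity), they commute because $P,Q,R,S$ commute among themselves and $E$ commutes with $F$, and $\langle G^{x_2}_{a_2}H^{y_2}_{b_2}(\zeta\otimes\eta),\zeta\otimes\eta\rangle$ reproduces $\Gamma[\cl E](a_2,b_2\mid x_2,y_2)$ after expanding via \eqref{eq:GammaE}. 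Hence $\Gamma[\cl E]\in\cl C_{\rm qc}$.

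Parts (iii), (ii), (iv) follow the same template. For (iii) I would take finite-dimensional realizations with the bipartite tensor splitting $H=H_A\otimes H_B$ (and likewise for $\cl H_E$) aligned with the Alice/Bob cut, and verify that $G^{x_2}_{a_2}$ and $H^{y_2}_{b_2}$ act on the respective legs, giving $\Gamma[\cl E]\in\cl C_{\rm q}$. Part (ii) follows by approximation: writing $\Gamma=\lim_n\Gamma_n$ with $\Gamma_n\in\cl C_{\rm sq}$ and $\cl E=\lim_m\cl E_m$ with $\cl E_m\in\cl C_{\rm q}$, the bilinearity and continuity of $(\Gamma,\cl E)\mapsto\Gamma[\cl E]$ together with (iii) and a diagonal argument give $\Gamma[\cl E]\in\cl C_{\rm qa}$. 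Part (iv) expands local $\Gamma$ and $\cl E$ as convex combinations of product correlations and uses bilinearity of \eqref{eq:GammaE} to exhibit $\Gamma[\cl E]$ as a convex combination of products. The main obstacle is the structural input in each case: establishing that the strong no-signalling conditions force the realization of $\Gamma$ to split into four mutually commuting measurement families, with the extra bipartite tensor compatibility needed for (iii). Once that factorization is in hand, the composite construction and the check that $G,H$ form commuting POVMs reproducing $\Gamma[\cl E]$ are routine; I expect the cleanest route to the factorization to be the operator-level analogue of Lemma~\ref{l:NS-decomp}, obtained from a Naimark/GNS dilation of the realizing state combined with the strong marginal equalities.
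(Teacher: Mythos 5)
The paper you are working against does not prove Theorem \ref{th:transportation} at all: it is imported verbatim from \cite{ht_one}. Your composite construction is indeed the mechanism used in that source --- tensor a realization of $\Gamma$ with one of $\cl E$ and form $G^{x_2}_{a_2}=\sum_{x_1,a_1}P^{x_2}_{x_1}R^{a_1}_{a_2}\otimes E^{x_1}_{a_1}$ and $H^{y_2}_{b_2}=\sum_{y_1,b_1}Q^{y_2}_{y_1}S^{b_1}_{b_2}\otimes F^{y_1}_{b_1}$ --- and your verification that $\Gamma[\cl E]$ is no-signalling, using \eqref{eq:GammaE} and the scalar strong equalities, is correct. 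The genuine gap is in the step you yourself defer as ``the main obstacle'': you propose to \emph{derive} the four-commuting-families factorization of a realization of $\Gamma$ from the scalar strong no-signalling equalities via a Naimark/GNS dilation, as an operator analogue of Lemma \ref{l:NS-decomp}. That step fails. Lemma \ref{l:NS-decomp} works because the no-signalling relations are imposed on the \emph{algebra elements} (that is exactly why the paper quotients by the ideal $I_{\rm NS}$ instead of deriving these relations); scalar equalities on the correlation cannot force any realization to factor. Concretely, take all of $X_i,Y_i,A_i,B_i$ equal to $\{0,1\}$ and set
\[
\Gamma\bigl((x_1,y_1),(a_2,b_2)\mid(x_2,y_2),(a_1,b_1)\bigr)\;:=\;p_{\rm PR}(x_1,y_1\mid x_2,y_2)\,\delta_{a_2,a_1}\,\delta_{b_2,b_1},
\]
where $p_{\rm PR}$ is the PR box ($p_{\rm PR}(x_1,y_1\mid x_2,y_2)=\tfrac12$ iff $x_1\oplus y_1=x_2y_2$). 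This $\Gamma$ is \emph{local} over the quadruple (it is a product of two one-party channels, the ``PR'' character being invisible when one player holds both $x$ and $y$) and satisfies every scalar strong NS equality. Yet it admits no realization by four mutually commuting PVM families: summing such a realization over $(a_2,b_2)$ would display $p_{\rm PR}(x_1,y_1\mid x_2,y_2)=\langle P^{x_2}_{x_1}Q^{y_2}_{y_1}\xi,\xi\rangle$ as a quantum commuting correlation, contradicting Tsirelson's bound. Worse, with $\cl E(a_1,b_1\mid x_1,y_1)=\delta_{a_1,x_1}\delta_{b_1,y_1}$ (local) one gets $\Gamma[\cl E]=p_{\rm PR}$, which is not even quantum commuting --- so under the purely scalar reading of $\cl C_{\rm sloc}$ the theorem itself is false. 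The resolution is definitional: in \cite{ht_one} (and \cite{Gage}) the classes $\cl C_{\rm st}$ are \emph{defined} through no-signalling operator matrices, i.e.\ the operator-level factorization is part of the hypothesis, exactly as the paper's later remark on ``no-signalling operator matrices'' indicates. Your proof becomes correct once this step is quoted as the definition rather than ``established''.

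A second, smaller issue concerns (iii): the tensor compatibility you need is not a splitting ``aligned with the Alice/Bob cut'' of $\Gamma$ viewed as a two-player correlation (questions versus answers). In the composite, $G^{x_2}_{a_2}$ uses the $X$-operators \emph{and} the $A$-operators of $\Gamma$, while $H^{y_2}_{b_2}$ uses the $Y$- and $B$-operators; so the splitting of $H$ must separate $\{P,R\}$ from $\{Q,S\}$, a cut transversal to the question/answer bipartition. If you only split $H$ along the two players of $\Gamma$, then $G^{x_2}_{a_2}$ and $H^{y_2}_{b_2}$ both act non-trivially on both legs, and your argument yields only $\Gamma[\cl E]\in\cl C_{\rm qc}$. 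Again, this finer splitting has to be supplied by the definition of $\cl C_{\rm sq}$, not extracted from the scalar conditions.
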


\medskip

We make the following modification of \cite[Definition 6.5]{ht_one}:
\begin{definition} \label{def:gameisom} \rm 
We call two non-local games \(\cl G_{i}=(X_i,Y_i;A_i,B_i,\lambda_i)\)  \emph{$\rm t$--isomorphic}, and  write $ \cl G_{1}\cong_{\rm t}\cl  G_{2}$ if \(|X_1|=|X_2|\), \(|Y_1|=|Y_2|\), \(|A_1|=|A_2|\), \(|B_1|=|B_2|\) and the hypergraph isomorphism game $ (\Lambda_{\cl G_1}, \Lambda_{\cl G_2})$ has a perfect SNS strategy $ \Gamma$ of type $ \rm t$, where $ \rm t= \{loc, q, qa, qc,C^* ,A^* \}$. 

\end{definition}

\begin{theorem} \label{th:NSgameisom}
Let \(\cl G_{i}=(X_i,Y_i;A_i,B_i,\lambda_i)\) be two non-local games with  \(|X_1|=|X_2|\), \(|Y_1|=|Y_2|\), \(|A_1|=|A_2|\), \(|B_1|=|B_2|\).
 We have the following:
\begin{itemize}
    \item[(i)] $ \cl G_{1}\cong_{\rm loc} \cl G_{2}$ if and only if there exists a unital $*$-homomorphism $\mathcal O_{\mathrm{NS}}(\cl G_1,\cl G_2)\rightarrow \bb{C}$.
    \item[(ii)] $ \cl G_{1}\cong_{\rm q}\cl G_{2}$ if and only if there exists a unital $*$-homomorphism $\mathcal O_{\mathrm{NS}}(\cl G_1,\cl G_2)\rightarrow \cl{B}(H)$ for some non-zero, finite-dimensional Hilbert space $H$.
    \item[(iii)] $ \cl G_{1}\cong_{\rm qa} \cl G_{2}$  if and only if there exists a unital $*$-homomorphism $\mathcal O_{\mathrm{NS}}(\cl G_1,\cl G_2) \rightarrow \cl{R}^{\cl{U}}$, where $\cl{U}$ is a free ultrafilter along $\bb{N}$ and $\cl{R}$ is the hyperfinite ${\rm II}_{1}$ factor.
    \item[(iv)] $ \cl G_{1}\cong_{\rm qc}\cl  G_{2}$ if and only if there exists a unital $*$-homomorphism $\mathcal O_{\mathrm{NS}}(\cl G_1,\cl G_2) \rightarrow A$, where $A$ is a unital ${\rm C}^{*}$-algebra endowed with a faithful tracial state.
\end{itemize}
\end{theorem}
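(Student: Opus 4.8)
The plan is to deduce the theorem from the synchronous characterization in Theorem~\ref{th:synchro_charact}, applied to the (bi)synchronous game $\mathrm{HypIso}(\Lambda_{\cl G_1},\Lambda_{\cl G_2})$, by identifying $\mathcal O_{\mathrm{NS}}(\cl G_1,\cl G_2)$ as the $*$-algebra that governs exactly the \emph{strongly} no-signalling perfect strategies. For each ${\rm t}\in\{\mathrm{loc},\mathrm q,\mathrm{qa},\mathrm{qc}\}$, Theorem~\ref{th:synchro_charact} already provides an equivalence between the existence of a perfect ${\rm t}$-strategy for $\mathrm{HypIso}(\Lambda_{\cl G_1},\Lambda_{\cl G_2})$ and the existence of a unital $*$-homomorphism from the full game algebra $\mathcal O(\Lambda_{\cl G_1},\Lambda_{\cl G_2})$ into the appropriate target ($\bb C$, finite-dimensional $\cl B(H)$, $\cl R^{\cl U}$, or a C$^*$-algebra with faithful trace), together with a trace $\tau$ on that target recovering the correlation values from the images of the magic-unitary generators. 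The entire content of the theorem is thus to show that imposing the SNS constraint of Definition~\ref{def:gameisom} on the strategy corresponds exactly to requiring the associated homomorphism to annihilate $I_{\mathrm{NS}}$, i.e.\ to factor through $\mathcal O_{\mathrm{NS}}(\cl G_1,\cl G_2)=\mathcal O(\Lambda_{\cl G_1},\Lambda_{\cl G_2})/I_{\mathrm{NS}}$.

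\textbf{From a game isomorphism to a homomorphism.} Suppose $\cl G_1\cong_{\rm t}\cl G_2$ via a perfect SNS strategy $\Gamma$ of type ${\rm t}$. Viewed merely as a perfect ${\rm t}$-strategy for the synchronous game, $\Gamma$ yields, through the construction underlying Theorem~\ref{th:synchro_charact}, a $*$-homomorphism $\phi$ out of $\mathcal O(\Lambda_{\cl G_1},\Lambda_{\cl G_2})$ and a faithful trace $\tau$ on the target from which $\Gamma$ is recovered (the identity on $\bb C$ for $\mathrm{loc}$, the normalized trace for $\mathrm q$, and the canonical faithful traces on $\cl R^{\cl U}$ and on the given C$^*$-algebra for $\mathrm{qa},\mathrm{qc}$). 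For a generator $D$ of $I_{\mathrm{NS}}$ -- a difference $s-s'$ of two row- or column-sums of entries of $P_U$ or $P_W$ -- each of $s,s'$ is a sum of pairwise orthogonal projections, hence a projection, and one expands $\tau\bigl(\phi(D)^*\phi(D)\bigr)=\tau(\phi(s))+\tau(\phi(s'))-2\tau(\phi(s)\phi(s'))$ into a combination of marginal values of $\Gamma$. This combination vanishes precisely because the relevant strong no-signalling marginal equality is built into the definition of an SNS correlation; faithfulness of $\tau$ then forces $\phi(D)=0$. Hence $\phi$ descends to $\mathcal O_{\mathrm{NS}}(\cl G_1,\cl G_2)$, giving the required homomorphism.

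\textbf{From a homomorphism to a game isomorphism.} Conversely, given a unital $*$-homomorphism $\psi$ out of $\mathcal O_{\mathrm{NS}}(\cl G_1,\cl G_2)$, precompose it with the canonical quotient map to obtain a $*$-homomorphism out of $\mathcal O(\Lambda_{\cl G_1},\Lambda_{\cl G_2})$ into the same target; Theorem~\ref{th:synchro_charact} turns this into a perfect ${\rm t}$-strategy $\Gamma$ for $\mathrm{HypIso}(\Lambda_{\cl G_1},\Lambda_{\cl G_2})$. Because this homomorphism kills $I_{\mathrm{NS}}$, the operator marginals $p^X,p^Y,p^A,p^B$ of Lemma~\ref{l:NS-decomp} are well defined, satisfy the factorizations $p^U=p^X p^Y$ and $p^W=p^A p^B$, and, under the cardinality hypotheses $|X_1|=|X_2|$, $|Y_1|=|Y_2|$, $|A_1|=|A_2|$, $|B_1|=|B_2|$, the matrices $P_X,P_Y,P_A,P_B$ are themselves magic unitaries. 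Applying the recovering trace/state to these operator identities yields exactly the strong marginal equalities of Definition~\ref{def:gameisom}, so $\Gamma$ is an SNS correlation; the product structure moreover displays $\Gamma$ as a genuine simulator in the sense of the simulation paradigm. Thus $\cl G_1\cong_{\rm t}\cl G_2$.

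\textbf{Main obstacle.} The delicate point is the forward direction, where one must pass from equalities of \emph{scalar} correlation marginals to equalities of the underlying \emph{operators}. This is where faithfulness of $\tau$ is essential, and it requires a careful dictionary between the correlation $\Gamma(u_1,w_2\mid u_2,w_1)$ and the generators $p^U,p^W$: one has to track which coordinate ($X_1$, $Y_1$, $A_2$ or $B_2$) is being summed out in each generator of $I_{\mathrm{NS}}$ and confirm that the expansion of $\tau\bigl(\phi(D)^*\phi(D)\bigr)$ collapses to precisely that strong marginal difference. Once this matching is pinned down, the faithfulness argument closes the forward direction and the converse follows routinely from Lemma~\ref{l:NS-decomp}.
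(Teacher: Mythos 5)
Your proposal is correct and follows essentially the same route as the paper: both directions hinge on recovering the correlation as $\tau(u_{x,a}u_{y,b})$ via the synchronous-game machinery, killing the generators of $I_{\mathrm{NS}}$ by showing $\tau\bigl(\phi(D)^{2}\bigr)=0$ from the SNS marginal equalities and faithfulness of $\tau$, and conversely using Lemma~\ref{l:NS-decomp} to read the strong marginal equalities off the operator relations. Your expansion $\tau(s)+\tau(s')-2\tau(ss')$ is just a traciality-condensed form of the paper's four double sums, so the arguments coincide.
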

\begin{proof}
    We will show the case $ \rm t =qc$; the others will follow by similar arguments. Recall that $\mathcal O_{\mathrm{NS}}(\cl G_1,\cl G_2) = \mathcal O(\Lambda_{\cl G_1}, \Lambda_{\cl G_2})\big/ I_{\mathrm{NS}}$ where $ O(\Lambda_{\cl G_1}, \Lambda_{\cl G_2})$ is the game $*$-algebra for the hypergraph isomorphism game between the non-local games $ \cl G_1$ and $ \cl G_2$. 

    $ (\implies )$ Let $ \Gamma$ be a perfect SNS $\rm qc$-strategy for the hypergraph isomorphism game. Using \cite[Theorem 5.5.]{quantum-chromatic} (see also \cite[Theorem 3.2]{hmps}), there exists a C*-algebra $A$ equipped with a faithful tracial state $ \tau$ such that 
    \[
    \Gamma( x,y |a,b )= \tau (u_{x,a} u_{y,b})  
    \]
    where $u_{x,a}, u_{y,b}$ are PVM's  satisfying the rules of the game. Recall that  $a,b,x,y$ vary over \( (X_1 \times Y_1) \sqcup (X_2 \times Y_2) \sqcup  (A_1\times B_1) \sqcup (A_2 \times B_2)\).  The arguments in Proposition \ref{prop:game-alg-presentation} and the fact that $\tau$ is faithful show that the projections $ u_{x,a}$ give rise to magic unitaries $\bigl(p^U_{(x_1,y_1),\,(x_2,y_2)}\bigr) $ and  $\bigl(p^W_{(a_1,b_1),\,(a_2,b_2)}\bigr)_{} $ that intertwine with the incidence matrices.  For $x_{2}, x_{2}' \in X_{2}$ set
 \[
S \;:=\;
\sum_{x_1\in X_1} p^U_{(x_1,y_1),(x_2,y_2)}
\;-\;
\sum_{x_1\in X_1} p^U_{(x_1,y_1),(x_2',y_2)}.
\]
Using the fact that $\Gamma$ is strongly no-signalling, we will show that $ \tau (S^2)=0$ implying that $ S=0$ since the trace is faithful. The rest of the NS conditions will follow from similar arguments. We calculate: 
\begin{eqnarray*}
& &  
\tau\!\left(\left(\sum_{x_1\in X_1} p^U_{(x_1,y_1),(x_2,y_2)}
\;-\;
\sum_{x_1\in X_1} p^U_{(x_1,y_1),(x_2',y_2)}\right)^2\right)\\
&= & 
\!\!\!\!\! \!\!\!\sum_{x_1,x_1'\in X_1} \!\!\! \tau\!\big(p^U_{(x_1,y_1),(x_2,y_2)}\,p^U_{(x_1',y_1),(x_2,y_2)}\big) 
- 
\!\!\! \!\!\!\sum_{x_1,x_1'\in X_1} \!\!\!\tau\!\big(p^U_{(x_1,y_1),(x_2',y_2)}\,p^U_{(x_1',y_1),(x_2,y_2)}\big)\\
& - &  
\!\!\!\!\! \!\!\!\sum_{x_1,x_1'\in X_1}\!\!\! \tau\!\big(p^U_{(x_1,y_1),(x_2,y_2)}\,p^U_{(x_1',y_1),(x_2',y_2)}\big)
+ 
\!\!\!\!\!\!\sum_{x_1,x_1'\in X_1} \!\!\!\tau\!\big(p^U_{(x_1,y_1),(x_2',y_2)}\,p^U_{(x_1',y_1),(x_2',y_2)}\big)\\[2mm]
& = & 
\!\!\!\!\!\!\!\!\sum_{x_1,x_1'\in X_1} \!\!\!\Gamma\! \big((x_1,y_1),(x_1',y_1)\big|(x_2,y_2),(x_2,y_2)\big)
- \!\!\!\!\!\!
\sum_{x_1,x_1'\in X_1}\!\!\! \Gamma\!\big((x_1,y_1),(x_1',y_1)\big|(x_2',y_2),(x_2,y_2)\big)\\
& - &  
\!\!\!\!\!\!\!\!\sum_{x_1,x_1'\in X_1} \!\!\!\Gamma\!\big((x_1,y_1),(x_1',y_1)\big|(x_2,y_2),(x_2',y_2)\big)
+ 
\!\!\!\!\!\sum_{x_1,x_1'\in X_1}\!\!\! \Gamma\!\big((x_1,y_1),(x_1',y_1)\big|(x_2',y_2),(x_2',y_2)\big)\\[2mm]
&=& \!\!\!\sum_{x_1'}\!\!\Bigg[
\sum_{x_1}\Gamma\!\big((x_1,y_1),(x_1',y_1)\big|(x_2,y_2),(x_2,y_2)\big)
-\!\!\sum_{x_1}\Gamma\!\big((x_1,y_1),(x_1',y_1)\big|(x_2',y_2),(x_2,y_2)\big)
\Bigg] \\
& -& \!\!\!\sum_{x_1'}\!\!\Bigg[
\sum_{x_1}\Gamma\!\big((x_1,y_1),(x_1',y_1)\big|(x_2,y_2),(x_2',y_2)\big)
-\!\!\sum_{x_1}\Gamma\!\big((x_1,y_1),(x_1',y_1)\big|(x_2',y_2),(x_2',y_2)\big)
\Bigg]\\[2mm]
& = & 0.
\end{eqnarray*}

As this holds for any $x_{2}, x_{2}' \in X_{2}$, we conclude that there exists a unital $*$-homomorphism $\mathcal O_{\mathrm{NS}}(\cl G_1,\cl G_2) \rightarrow A$.

    $( \impliedby) $ By assumption, and Lemma \ref{l:NS-decomp},  we have  $ (p^X_{x_1,x_2})_{x_1 \in X_1}$ , $ (p^Y_{y_1,y_2})_{y_1\in Y_1}$, $ (p^A_{a_1,a_2})_{a_2\in A_2}$ and $ (p^B_{b_1,b_2})_{b_2 \in B_2}$ are magic unitaries for all $ x_2, y_2, a_1, b_1$ resp. with entries in a unital tracial $C^*$-algebra $A$ and 
\[
p^U_{(x_1,y_1),\,(x_2,y_2)} = p^X_{x_1,x_2}\,p^Y_{y_1,y_2},
\qquad
p^W_{(a_1,b_1),\,(a_2,b_2)} = p^A_{a_1,a_2}\,p^B_{b_1,b_2}.
\]
Now setting $ \Gamma( x,y |a,b )= \tau (u_{x,a} u_{y,b}) $ where $u_{x,a}$ is defined via the generators of $ \cl O_{\rm NS}(\cl G_1,\cl G_2)$ as in Proposition \ref{prop:game-alg-presentation} defines a perfect $\rm qc$-correlations that is moreover SNS by the above decomposition of the generators in the $*$-algebra.

\end{proof}

\begin{corollary}
     Let \(\cl G_{i}=(X_i,Y_i;A_i,B_i,\lambda_i)\)  be two non-local games  such that $ G_{1}\cong_{\rm t} G_{2}$, $ \rm t= loc,q,qa,qc$.  Then $ \cl G_1$ has a perfect $\rm t$-strategy if and only if $\cl G_2$ has a perfect $\rm t$-strategy.
\end{corollary}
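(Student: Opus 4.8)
The plan is to regard the isomorphism $\cl G_1\cong_{\rm t}\cl G_2$ as providing a \emph{transporter} and to feed it into the transportation result of \cite{ht_one}, recorded here as Theorem~\ref{th:transportation}. By Definition~\ref{def:gameisom}, $\cl G_1\cong_{\rm t}\cl G_2$ means that $\mathrm{HypIso}(\Lambda_{\cl G_1},\Lambda_{\cl G_2})$ admits a perfect SNS strategy $\Gamma$ of type $\rm t$; by the remark preceding Theorem~\ref{th:transportation}, such a $\Gamma$, restricted to its type/swap support and relabeled, is precisely an SNS correlation in $\cl C_{\rm st}$ over the quadruple $(X_2\times Y_2,\,A_1\times B_1,\,X_1\times Y_1,\,A_2\times B_2)$. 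The whole statement then reduces to showing that $\Gamma$ carries perfect strategies to perfect strategies, in both directions.

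First I would establish the forward implication. Suppose $\cl G_1$ has a perfect $\rm t$-strategy $\cl E\in\cl C_{\rm t}$, so that $\cl E(a_1,b_1\mid x_1,y_1)=0$ whenever $\lambda_1(x_1,y_1,a_1,b_1)=0$. Applying Theorem~\ref{th:transportation} to $\Gamma\in\cl C_{\rm st}$ and $\cl E\in\cl C_{\rm t}$ already yields $\Gamma[\cl E]\in\cl C_{\rm t}$ over $(X_2,Y_2,A_2,B_2)$, so the only genuinely new point is that $\Gamma[\cl E]$ is \emph{perfect} for $\cl G_2$. Writing $U_i=X_i\times Y_i$ and $W_i=A_i\times B_i$, the defining formula \eqref{eq:GammaE} gives
\[
\Gamma[\cl E]\bigl((a_2,b_2)\mid(x_2,y_2)\bigr)=\sum_{u_1\in U_1}\ \sum_{w_1\in W_1}\Gamma\bigl(u_1,(a_2,b_2)\mid(x_2,y_2),w_1\bigr)\,\cl E(w_1\mid u_1).
\]
Perfectness of $\cl E$ collapses this to summands with $(u_1,w_1)\in\Lambda_{\cl G_1}$, i.e.\ the vertex $u_1\in U_1$ is incident to the edge $w_1\in W_1$ in $\Lambda_{\cl G_1}$. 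If in addition $\lambda_2(x_2,y_2,a_2,b_2)=0$, then the vertex $(x_2,y_2)\in U_2$ and the edge $(a_2,b_2)\in W_2$ are non-incident in $\Lambda_{\cl G_2}$; this incidence mismatch is exactly what the incidence-preservation winning rule of $\mathrm{HypIso}$ forbids, so $\Gamma\bigl(u_1,(a_2,b_2)\mid(x_2,y_2),w_1\bigr)=0$ for every surviving summand. Hence $\Gamma[\cl E]$ vanishes on every losing pair of $\cl G_2$ and is therefore a perfect $\rm t$-strategy for $\cl G_2$.

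For the converse I would appeal to symmetry: the hypergraph isomorphism game is symmetric in its two arguments, and being bisynchronous its perfect strategies are SNS bicorrelations, so the dual $\Gamma^{*}$ is again an SNS correlation of type $\rm t$ and witnesses $\cl G_2\cong_{\rm t}\cl G_1$. Running the forward argument with the roles of $\cl G_1$ and $\cl G_2$ interchanged then transports a perfect $\rm t$-strategy for $\cl G_2$ back to one for $\cl G_1$. I expect the perfectness step in the forward direction to be the main obstacle: Theorem~\ref{th:transportation} only delivers membership of $\Gamma[\cl E]$ in the correct class $\cl C_{\rm t}$, and one must separately exploit the winning condition of $\mathrm{HypIso}$---equivalently the intertwining relation $A_{\Lambda_{\cl G_1}}P_W=P_U A_{\Lambda_{\cl G_2}}$ among the generators---to see that $\Gamma$ annihilates all incidence-violating configurations, thereby confining the support of $\Gamma[\cl E]$ to the winning pairs of $\cl G_2$.
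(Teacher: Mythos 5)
Your forward direction is correct and is essentially the paper's argument: the paper likewise feeds the perfect SNS strategy $\Gamma$ into Theorem~\ref{th:transportation}, and handles the perfectness of $\Gamma[\cl E]$ by citing \cite[Proposition 3.2]{ht_one}, whereas you verify the perfectness transfer directly from the incidence-preservation rule. That verification is sound: perfectness of $\cl E$ confines the sum in \eqref{eq:GammaE} to incident pairs $(u_1,w_1)\in\Lambda_{\cl G_1}$, and perfectness of $\Gamma$ for $\mathrm{HypIso}$ then annihilates every such term when $(u_2,w_2)\notin\Lambda_{\cl G_2}$, so $\Gamma[\cl E]$ vanishes on all losing pairs of $\cl G_2$.

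The gap is in the converse. You assert that, since $\mathrm{HypIso}$ is bisynchronous and symmetric, ``its perfect strategies are SNS bicorrelations,'' so that $\Gamma^{*}$ is again SNS of type $\rm t$. Bisynchronicity does give the \emph{bicorrelation} part: in a tracial realization, $\lambda(x,y,a,a)=\delta_{x,y}$ together with equal question/answer cardinalities forces the column sums $\sum_{x}e_{x,a}$ to equal $1$, so the transpose is a correlation of the same type. But it does not give the \emph{strong} no-signalling equalities for $\Gamma^{*}$: these concern marginals over the sub-indices $x_2,y_2,a_1,b_1$ inside the product question/answer sets, and are unrelated to the diagonal constraints that bisynchronicity imposes. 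The paper obtains them from Lemma~\ref{l:NS-decomp}: the SNS conditions on $\Gamma$, pushed through a faithful trace, show that the representation factors through $\cl O_{\rm NS}(\cl G_1,\cl G_2)$, where the generators factor as $p^U_{(x_1,y_1),(x_2,y_2)}=p^X_{x_1,x_2}\,p^Y_{y_1,y_2}$ (and similarly on the $W$-block) with $P_X,P_Y,P_A,P_B$ magic unitaries; only then does one get, e.g., $\sum_{x_2}p^U_{(x_1,y_1),(x_2,y_2)}=p^Y_{y_1,y_2}$, independent of $x_1$, which is exactly the strong marginal equality making $\Gamma^{*}$ SNS and allowing Theorem~\ref{th:transportation} to be applied in the reverse direction. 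So your plan is the right one, but the step ``$\Gamma^{*}$ is SNS of type $\rm t$'' needs Lemma~\ref{l:NS-decomp} (equivalently, the algebraic route through Theorem~\ref{th:NSgameisom}), not bisynchronicity; as written, that step is unjustified.
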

\begin{proof}
     Since $ G_{1}\cong_{\rm t} G_{2}$, means by definition that there exists a perfect SNS strategy of type $t$ coming from the respective representation of the $*$-algebra $\cl O_{NS}(\Lambda_{\cl G_1}, \Lambda_{\cl G_2})$, the ``only if" follows from Theorem \ref{th:transportation} and \cite[Proposition 3.2]{ht_one}.
     
     On the other hand,  Lemma \ref{l:NS-decomp} implies that the induced SNS correlation is in fact an SNS bicorrelation $\Gamma$, and the ``if"  follows from Theorem \ref{th:transportation} and \cite[Proposition 3.2]{ht_one} applied to $\Gamma$ and $ \Gamma^*$.
\end{proof}

\begin{remark} \rm In \cite{Gage, ht_one}, the authors defined  different classes of SNS bicorrelations via no-signalling operator matrices; that is, matrices of operators satisfying no-signalling conditions similar to  those for SNS correlations. We note that
when a perfect SNS qc-bicorrelation $\Gamma$ has a realization via magic unitaries $ P_{(x_1,y_1),(x_2,y_2)}, Q_{(a_1,b_2),(a_2,b_2)}$ that satisfy   no-signalling conditions, we will automatically have \(|X_1|=|X_2|\), \(|Y_1|=|Y_2|\), \(|A_1|=|A_2|\), \(|B_1|=|B_2|\). On the other hand, requiring all of the above sets to be equal makes a perfect SNS qc-correlation $\Gamma$ an SNS bicorrelation by Lemma \ref{l:NS-decomp} as we saw  in Theorem \ref{th:NSgameisom}.  For this reason, we did not require bicorrelations in the Definition \ref{def:gameisom} in contrast to  \cite[Definition 6.5]{ht_one}.
\end{remark}

We now demonstrate a simple example.
\begin{example} \rm 
Let $\cl G_i=(X_i,Y_i;A_i,B_i,\lambda_i)$ be two nonlocal games and assume that there exist bijections
\[
\alpha_X:X_1\!\to\!X_2,\ \alpha_Y:Y_1\!\to\!Y_2,\ 
\beta_A:A_1\!\to\!A_2,\ \beta_B:B_1\!\to\!B_2
\]
such that for all $x,y,a,b$,
\[
\lambda_2\big(\alpha_X(x),\alpha_Y(y)\,\big|\,\beta_A(a),\beta_B(b)\big)=\lambda_1(x,y\,|\,a,b).
\]
Write $U_i:=X_i\times Y_i$, $W_i:=A_i\times B_i$ and let $A_{\Lambda_i}$ be the incidence matrices of the
hypergraphs $\Lambda_{\cl G_i}\subseteq U_i\times W_i$.

Define permutation blocks $P_U\in M_{U_1\times U_2}$, $P_W\in M_{W_1\times W_2}$ by
\[
p^U_{(x_1,y_1),(x_2,y_2)}=\delta_{x_2,\alpha_X(x_1)}\delta_{y_2,\alpha_Y(y_1)},\qquad
p^W_{(a_1,b_1),(a_2,b_2)}=\delta_{a_2,\beta_A(a_1)}\delta_{b_2,\beta_B(b_1)}.
\] 
Then $P:=P_U\oplus P_W$ is unitary satisfying the strong NS conditions and 
\[
A_{\Lambda_1}\,P_W \;=\; P_U\,A_{\Lambda_2},
\]
by verifying Lemma \ref{l_inclusion_rel}.
Hence the entries of $P_U,P_W$ give a  classical perfect SNS strategy
for the hypergraph–isomorphism game $(\Lambda_{\cl G_1},\Lambda_{\cl G_2})$. Consequently $\cl G_1\cong_{\mathrm{loc}}\cl G_2$.
\end{example}

\subsection{The NS algebra as a quotient bi-Galois extension}

Given that the NS game $*$-algebra $\mathcal O_{\mathrm{NS}}(\cl G_1,\cl G_2)$ is a quotient of a bi-Galois extension (namely of ${\cl O}(\Lambda_{\cl G_1},\Lambda_{\cl G_2})$) we wish to show next that the former also carries a natural bi-Galois extension structure.

Denote the canonical corepresentations on the question/answer blocks by
\[
u^U=\big(u^U_{(x_1,y_1),(x_1',y_1')}\big)_{U_1\times U_1},\quad
u^W=\big(u^W_{(a_1,b_1),(a_1',b_1')}\big)_{W_1\times W_1}
\ \text{in }\cl O(\Lambda_{\cl G_1}),
\]
\[
v^U=\big(v^U_{(x_2,y_2),(x_2',y_2')}\big)_{U_2\times U_2},\quad
v^W=\big(v^W_{(a_2,b_2),(a_2',b_2')}\big)_{W_2\times W_2}
\ \text{in }\cl O(\Lambda_{\cl G_2}).
\]

\medskip
Set $J_1 $  to be the $*$-ideal in $ \cl O(\Lambda_{\cl G_1})$  generated by the four
families of NS relations on each side:
\begin{align*}
\sum_{y_1\in Y_1}\! u^U_{(x_1,y_1),(x_1',y_1')}\;=&\!\sum_{y_1\in Y_1}\! u^U_{(x_1,y_1),(x_1',y_1'')}
&&\forall\,x_1,x_1',y_1'',\\
\sum_{x_1\in X_1}\! u^U_{(x_1,y_1),(x_1',y_1')}\;=& \!\sum_{x_1\in X_1}\! u^U_{(x_1,y_1),(x_1'',y_1')} 
&&\forall\,y_1,y_1',x_1'',
\end{align*}
and
\begin{align*}
\sum_{a_1\in A_1}\! u^W_{(a_1,b_1),(a_1',b_1')}\;=&\!\sum_{a_1\in A_1}\! u^W_{(a_1,b_1),(a_1'',b_1')}
&&\forall\,b_1,a_1',a_1'', \\
\sum_{b_1\in B_1}\! u^W_{(a_1,b_1),(a_1',b_1')}\;=&\!\sum_{b_1\in B_1}\! u^W_{(a_1,b_1),(a_1',b_1'')}
&&\forall\, a_1,b_1',b_1''. 
\end{align*}

and similarly  $J_2 $  in $\cl O(\Lambda_{\cl G_2})$.
Set
\[
H_1:=\cl O(\Lambda_{\cl G_1})/J_1,\qquad
H_2:=\cl O(\Lambda_{\cl G_2})/J_2.
\]

For the following facts we refer the reader to \cite[Theorem 4.2.1]{abe} and the preceeding discussion. Let $(H,\Delta,\varepsilon,S)$ be a Hopf $*$-algebra over $\mathbb C$.
A \emph{Hopf $*$-ideal} $J\subseteq H$ is a two–sided $*$-ideal such that
\[
\Delta(J)\ \subseteq\ J \otimes A+ A\otimes J,\qquad
\varepsilon(J)=0,\qquad
S(J)\subseteq J.
\]

Let $H$ be a Hopf $*$-algebra and $ J$ be a Hopf $*$-ideal.
Then the quotient algebra \(\overline H:=H/J\) is a Hopf $*$-algebra with structure maps
\[
\overline\Delta\bigl( q(h)\bigr):=(q\otimes q)\Delta(h),\qquad
\overline\varepsilon\bigl( q(h)\bigr):=\varepsilon(h),\qquad
\overline S\bigl( q(h)\bigr):=q\bigl(S(h)\bigr),
\]
where \(q:H\twoheadrightarrow \overline H\) is the quotient map.

\begin{lemma}\label{lem:JL-Hopf}
For $i=1,2$, let $\mathcal O(\Lambda_{\mathcal G_i})$ be the Hopf $*$-algebra of the quantum
automorphism group of $\Lambda_{\mathcal G_i}$ and let $J_i$ be the $*$-ideal defined above.
Then $H_i:=\mathcal O(\Lambda_{\mathcal G_i})/J_i$ is a Hopf $*$-algebra. In particular, if $\pi_i:\mathcal O(\Lambda_{\mathcal G_i})\twoheadrightarrow H_i$ denotes the quotient map,
there is a \emph{unique} Hopf $*$-algebra structure on $H_i$ such that
\[
\overline\Delta_i\bigl(\pi_i(u)\bigr)=(\pi_i\otimes \pi_i)\Delta(u),\qquad
\overline\varepsilon_i\bigl(\pi_i(u)\bigr)=\varepsilon(u),\qquad
\overline S_i\bigl(\pi_i(u)\bigr)=\pi_i\bigl(S(u)\bigr),
\]
for all $u\in\mathcal O(\Lambda_{\mathcal G_i})$.
\end{lemma}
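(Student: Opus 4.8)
The plan is to reduce everything to the quotient criterion for Hopf $*$-algebras recalled immediately before the lemma: by \cite[Theorem~4.2.1]{abe} it suffices to prove that each $J_i$ is a \emph{Hopf $*$-ideal}, i.e.\ that
\[
\Delta(J_i)\subseteq J_i\otimes \cl O(\Lambda_{\cl G_i})+\cl O(\Lambda_{\cl G_i})\otimes J_i,\qquad \varepsilon(J_i)=0,\qquad S(J_i)\subseteq J_i .
\]
Since $J_i$ is generated as a two-sided $*$-ideal by the (self-adjoint) marginal differences listed before the lemma, and since $\Delta,\varepsilon,S$ are (anti)homomorphisms, it is enough to verify these three inclusions on a generating set. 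Writing $\pi_i:\cl O(\Lambda_{\cl G_i})\twoheadrightarrow H_i$ for the quotient map, I will use throughout that $\ker(\pi_i\otimes\pi_i)=J_i\otimes\cl O(\Lambda_{\cl G_i})+\cl O(\Lambda_{\cl G_i})\otimes J_i$; thus the coproduct condition is equivalent to $(\pi_i\otimes\pi_i)\Delta(r)=0$ for every generator $r$, and the antipode condition to $\pi_i\big(S(r)\big)=0$.

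The key technical input is a factorization of the image magic unitaries in $H_i$, the single-object analogue of Lemma~\ref{l:NS-decomp}. Put $\overline u^{U}_{\,\cdot\,,\,\cdot\,}:=\pi_i(u^U_{\,\cdot\,,\,\cdot\,})$; as $\pi_i$ is a unital $*$-homomorphism, $\overline u^{U}$ and $\overline u^{W}$ are again magic unitaries. The relations in $J_i$ are exactly those making the question marginals
\[
\overline u^{X}_{x_1,x_1'}:=\sum_{y_1}\overline u^{U}_{(x_1,y_1),(x_1',y_1')},\qquad
\overline u^{Y}_{y_1,y_1'}:=\sum_{x_1}\overline u^{U}_{(x_1,y_1),(x_1',y_1')}
\]
well defined in $H_i$ (independent of $y_1'$, resp.\ $x_1'$), and an averaging argument as in Lemma~\ref{l:NS-decomp} shows that each of $\overline u^{X},\overline u^{Y}$ (and likewise the answer marginals $\overline u^{A},\overline u^{B}$ obtained from $\overline u^{W}$) has all row and column sums equal to $1$. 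Column-orthogonality of the magic unitary $\overline u^{U}$ then collapses the product to the factorization
\[
\overline u^{U}_{(x_1,y_1),(x_1',y_1')}=\overline u^{X}_{x_1,x_1'}\,\overline u^{Y}_{y_1,y_1'},\qquad
\overline u^{W}_{(a_1,b_1),(a_1',b_1')}=\overline u^{A}_{a_1,a_1'}\,\overline u^{B}_{b_1,b_1'}.
\]
Establishing this (well-definedness of the marginals, their unit row/column sums, and the product formula) is the first step.

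With the factorization available the remaining checks are short. The counit is immediate: on a generator such as $r=\sum_{y_1}\big(u^U_{(x_1,y_1),(x_1',y_1')}-u^U_{(x_1,y_1),(x_1',y_1'')}\big)$ one finds $\varepsilon(r)=\delta_{x_1,x_1'}(1-1)=0$, and analogously for the other three families and the $W$-block. For the coproduct I expand $\Delta\big(u^U_{(x_1,y_1),(x_1',y_1')}\big)=\sum_{(s,t)\in U_1}u^U_{(x_1,y_1),(s,t)}\otimes u^U_{(s,t),(x_1',y_1')}$, apply $\pi_i\otimes\pi_i$, substitute the factorization, and use the unit column sums: summing over the row index $y_1$ turns the left tensor leg into $\overline u^{X}_{x_1,s}$, after which the right leg carries the difference $\sum_t\big(\overline u^{Y}_{t,y_1'}-\overline u^{Y}_{t,y_1''}\big)=1-1=0$. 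Hence $(\pi_i\otimes\pi_i)\Delta(r)=0$; the second $U$-family and the two $W$-families are symmetric.

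I expect the antipode condition to be the main obstacle, because $S\big(u^U_{(x_1,y_1),(x_1',y_1')}\big)=u^U_{(x_1',y_1'),(x_1,y_1)}$ interchanges the row and column indices, so $S$ sends a generator of $J_i$ (a marginal over a \emph{row} index) to a \emph{transposed} marginal over a \emph{column} index, which is \emph{not} one of the defining relations. The resolution is again the factorization: applying $\pi_i$ to $S(r)$ and factoring gives
\[
\pi_i\big(S(r)\big)=\overline u^{X}_{x_1',x_1}\sum_{y_1}\big(\overline u^{Y}_{y_1',y_1}-\overline u^{Y}_{y_1'',y_1}\big)=\overline u^{X}_{x_1',x_1}\,(1-1)=0,
\]
where now the \emph{row} sums of $\overline u^{Y}$ being $1$ are used, so the transposed relation is a consequence of the defining ones and $S(r)\in J_i$; the analogous computation with $\overline u^{X}$ (resp.\ $\overline u^{A},\overline u^{B}$) disposes of the remaining generators. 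This shows $J_i$ is a Hopf $*$-ideal, whence by the quotient construction recalled above $H_i=\cl O(\Lambda_{\cl G_i})/J_i$ is a Hopf $*$-algebra, and uniqueness of the stated structure maps is forced by the surjectivity of $\pi_i$.
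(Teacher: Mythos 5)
Your proof is correct, and it takes a genuinely different (and more complete) route than the paper's. Both arguments reduce to the Hopf $*$-ideal criterion from \cite[Theorem 4.2.1]{abe} recalled before the lemma and check the conditions on the generating marginal differences; but the paper verifies only the coideal condition, and does so without any factorization: expanding $\Delta\big(R^U_{x,x';y',y''}\big)$ and regrouping the inner sum, it uses that $\pi_1\big(\sum_{y} u^U_{(x,y),(r,s)}\big)$ is independent of $s$ to collapse the left tensor leg, whereupon the right leg becomes literally another generator of the same NS family (with left index $r$) and is killed by $\pi_1$. You instead first prove the single-object analogue of Lemma \ref{l:NS-decomp} in the quotient --- well-defined marginals $\overline u^{X},\overline u^{Y}$ with unit row and column sums (the row sums via the averaging trick, which is unconditional here since both sides carry the same index sets) and the factorization $\overline u^{U}_{(x,y),(x',y')}=\overline u^{X}_{x,x'}\,\overline u^{Y}_{y,y'}$ by column orthogonality --- and then derive all three conditions from it. For the coproduct this is somewhat heavier than the paper's short grouping argument, but it pays off exactly where the paper is silent: the paper dismisses $\varepsilon(J_i)=0$ and $S(J_i)\subseteq J_i$ as ``straightforward,'' yet, as you correctly observe, $S$ transposes row and column indices, so it sends a row-fiber generator to a transposed column-fiber relation that is \emph{not} among the defining generators of $J_i$; the inclusion $S(J_i)\subseteq J_i$ therefore does require an argument, and your computation $\pi_i\big(S(r)\big)=\overline u^{X}_{x',x}\sum_{y}\big(\overline u^{Y}_{y',y}-\overline u^{Y}_{y'',y}\big)=0$, resting on the unit row sums of the marginals, supplies precisely this missing step. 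So your proposal not only proves the lemma but fills a genuine omission in the paper's written proof.
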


\begin{proof}
We only treat  the comultiplication $\Delta$ case. The rest of them are straightforward.
Work on the $U$–block; the $W$–block is identical. Fix $x,x'\in X_1$ and $y',y''\in Y_1$ and consider the NS fiber
generator
\[
R^U_{x,x';\,y',y''}
\;:=\;
\sum_{y\in Y_1} u^{U}_{(x,y),(x',y')}
\;-\;
\sum_{y\in Y_1} u^{U}_{(x,y),(x',y'')}\ \in\ J_1.
\]

The comultiplication on matrix coefficients is
\(
\Delta\big(u^{U}_{\alpha,\beta}\big)=\sum_{\gamma\in U_1} u^{U}_{\alpha,\gamma}\otimes u^{U}_{\gamma,\beta}.
\)
Thus
\[
\begin{aligned}
\Delta\!\big(R^U_{x,x';\,y',y''}\big)
&=
\sum_{y\in Y_1}\sum_{(r,s)\in U_1}
u^{U}_{(x,y),(r,s)}\otimes u^{U}_{(r,s),(x',y')}
\;-\;
\sum_{y\in Y_1}\sum_{(r,s)\in U_1}
u^{U}_{(x,y),(r,s)}\otimes u^{U}_{(r,s),(x',y'')}
\\[2mm]
&=
\sum_{(r,s)\in U_1}
\Big(\sum_{y\in Y_1} u^{U}_{(x,y),(r,s)}\Big)
\ \otimes\
\Big(u^{U}_{(r,s),(x',y')}-u^{U}_{(r,s),(x',y'')}\Big).
\end{aligned}
\]
For each $(r,s)$,
\[
\pi_1\Big(\sum_{y\in Y_1} u^U_{(x,y),(r,s)}\Big)\ =\ \pi_1\big(u^{X_1}_{x,r}\big),
\]
which is independent of $s$. Apply $\pi_1\otimes \pi_1$  and group the sum over $s$:
\[
\begin{aligned}
(\pi_1\otimes \pi_1)\Delta(R_{x, x';y',y''}^{U})
&=\sum_{(r,s)} \pi_1\Big(\sum_{y} u^U_{(x,y),(r,s)}\Big)\ \otimes\ \pi_1\Big(u^U_{(r,s),(x',y')}-u^U_{(r,s),(x',y'')}\Big)\\
&=\sum_{r\in X_1} \pi_1 \big(u^{X_1}_{x,r}\big)\ \otimes\
\pi_1\Big(\sum_{s\in Y_1} u^U_{(r,s),(x',y')}-\sum_{s\in Y_1} u^U_{(r,s),(x',y'')}\Big).
\end{aligned}
\]
The bracket on the right is exactly the $Y$–fiber NS generator with left index $r$, hence lies in $J_1$ and vanishes under $\pi_1$.
Therefore $(\pi_1\otimes \pi_1)\Delta(R_{x, x';y',y''}^{U})=0$, for every $x, x' \in X_{1}, y', y'' \in Y_{1}$.
This proves the coideal part of the Hopf ideal property for (one family of) generators.
\end{proof}

\begin{theorem}\label{thm:NS-bigalois-quotients-kappa-kernels-commute}
Let $\cl G_i=(X_i,Y_i;A_i,B_i,\lambda_i)$, $i=1,2$, be two non-local games.
Let $J_i \subseteq \cl O(\Lambda_{\cl G_i})$ be the $*$-ideals generated by the four NS  families,
and put $H_i:=\cl O(\Lambda_{\cl G_i})/J_i$ with quotient maps $\pi_i:\cl O(\Lambda_{\cl G_i})\twoheadrightarrow H_i$.
Write
\[
q:\ \cl O(\Lambda_{\cl G_1},\Lambda_{\cl G_2})\ \longrightarrow\
\cl O_{\mathrm{NS}}:=\cl O(\Lambda_{\cl G_1},\Lambda_{\cl G_2})/I_{\mathrm{NS}}
\]
for the NS quotient. Then $\cl O_{\mathrm{NS}}$ is an $H_1$–$H_2$ bi-Galois extension.  
The descended coactions are given by
\[
\overline\delta\circ q \;=\; (\pi_1\otimes q)\circ \delta:
\cl O(\Lambda_{\cl G_1},\Lambda_{\cl G_2})\longrightarrow H_1\otimes \cl O_{\mathrm{NS}},
\]
\[
\overline\gamma\circ q \;=\; (q\otimes \pi_2)\circ \gamma:
\cl O(\Lambda_{\cl G_1},\Lambda_{\cl G_2})\longrightarrow \cl O_{\mathrm{NS}}\otimes H_2,
\]
where $\delta,\gamma$ are the unreduced commuting coactions.
\end{theorem}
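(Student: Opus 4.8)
The plan is to build the bi-Galois structure on $\mathcal{O}_{\mathrm{NS}}$ by descending the structure on $\mathcal{O}(\Lambda_{\mathcal G_1},\Lambda_{\mathcal G_2})$ from Theorem~\ref{th_bigalois_ext} along the quotient maps $q,\pi_1,\pi_2$. The central step is to verify that the unreduced coactions descend, i.e.\ that the $*$-homomorphisms $(\pi_1\otimes q)\circ\delta$ and $(q\otimes\pi_2)\circ\gamma$ annihilate $I_{\mathrm{NS}}$; since both are algebra homomorphisms and $I_{\mathrm{NS}}$ is the $*$-ideal generated by the NS marginal relations, it suffices to evaluate them on those generators. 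For the $U$-generator $r=\sum_{x_1}p^U_{(x_1,y_1),(x_2,y_2)}-\sum_{x_1}p^U_{(x_1,y_1),(x_2',y_2)}$ one computes
\[
\delta(r)=\sum_{(x_1',y_1')\in U_1}\Big(\sum_{x_1}u^U_{(x_1,y_1),(x_1',y_1')}\Big)\otimes\Big(p^U_{(x_1',y_1'),(x_2,y_2)}-p^U_{(x_1',y_1'),(x_2',y_2)}\Big).
\]
Applying $\pi_1\otimes q$, the left factor becomes the $H_1$-marginal $\pi_1(u^{Y_1}_{y_1,y_1'})$, which by the defining relations of $J_1$ is independent of $x_1'$; summing over $x_1'$ then turns the right factor into $q(p^Y_{y_1',y_2}-p^Y_{y_1',y_2})=0$, the cancellation being exactly the $I_{\mathrm{NS}}$-independence of $p^Y$ from the $x_2$-coordinate. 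The $y_1$-summed $U$-generators are identical, and the $W$-generators are handled by the same mechanism after invoking the internal factorization $u^W=u^{A_1}u^{B_1}$ of $H_1$ from Lemma~\ref{l:NS-decomp} (applied to the single game $\mathcal G_1$, hence unconditionally), which collapses $\sum_{a_1}u^W_{(a_1,b_1),(\cdot)}$ to an $H_1$-marginal independent of the right index. The verification for $(q\otimes\pi_2)\circ\gamma$ is symmetric, using $J_2$ and the $\mathcal O(\Lambda_2)$-marginals. This produces the well-defined descended coactions $\overline\delta,\overline\gamma$ of the statement.

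Next I would check that $\overline\delta$ (resp.\ $\overline\gamma$) is a left $H_1$- (resp.\ right $H_2$-)$*$-comodule algebra structure and that the two commute. Each of these is an identity of $*$-homomorphisms that already holds on $\mathcal{O}(\Lambda_{\mathcal G_1},\Lambda_{\mathcal G_2})$: coassociativity and counitality from Proposition~\ref{p_delta_comod} and Remark~\ref{rm_gamma_comod}, and $(\delta\otimes\mathrm{id})\gamma=(\mathrm{id}\otimes\gamma)\delta$ from Theorem~\ref{th_bigalois_ext}. Using the compatibilities $\overline\Delta_i\pi_i=(\pi_i\otimes\pi_i)\Delta$ and $\overline\varepsilon_i\pi_i=\varepsilon$ of Lemma~\ref{lem:JL-Hopf}, each downstairs identity follows by pre-composing the upstairs identity with the surjection $q$, chasing the resulting commuting squares, and cancelling $q$; no new computation is required here beyond naturality.

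The substantive remaining step is bijectivity of the descended canonical maps
\[
\kappa_{\overline\delta}\colon z\otimes z'\mapsto\overline\delta(z)(1\otimes z'),\qquad
\kappa_{\overline\gamma}\colon z\otimes z'\mapsto(z\otimes1)\overline\gamma(z'),
\]
which I would obtain by descending the explicit inverses $\eta_\delta,\eta_\gamma$ from the proof of Theorem~\ref{th_bigalois_ext}. Recall $\eta_\delta=(\mathrm{id}\otimes m)\circ(\alpha\otimes\mathrm{id})$ with $\alpha(u_{v_1,w_1})=\sum_{v_2}p_{v_1,v_2}\otimes p_{w_1,v_2}$. The key point is that $\alpha$ descends to $\overline\alpha\colon H_1\to\mathcal O_{\mathrm{NS}}\otimes\mathcal O_{\mathrm{NS}}$, i.e.\ that $(q\otimes q)\alpha$ kills $J_1$: for $s=\sum_{x_1}u^U_{(x_1,y_1),(x_1',y_1')}-\sum_{x_1}u^U_{(x_1,y_1),(x_1'',y_1')}$ one finds
\[
(q\otimes q)\alpha(s)=\sum_{y_2}q(p^Y_{y_1,y_2})\otimes q\Big(\sum_{x_2}p^U_{(x_1',y_1'),(x_2,y_2)}-\sum_{x_2}p^U_{(x_1'',y_1'),(x_2,y_2)}\Big),
\]
and the bracket vanishes because Lemma~\ref{l:NS-decomp} gives $p^U=p^Xp^Y$ with $P_X$ a magic unitary, so $\sum_{x_2}p^U_{(x_1',y_1'),(\cdot)}=p^Y_{y_1',\cdot}$ is independent of the label $x_1'$. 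Setting $\overline\eta_\delta:=(\mathrm{id}\otimes m)\circ(\overline\alpha\otimes\mathrm{id})$ (now with $m$ the product of $\mathcal O_{\mathrm{NS}}$) and the symmetric $\overline\eta_\gamma$, the mutual-inverse identities $\kappa_{\overline\delta}\,\overline\eta_\delta=\mathrm{id}$, $\overline\eta_\delta\,\kappa_{\overline\delta}=\mathrm{id}$ (and their $\gamma$-analogues) follow by applying the quotient maps to the identities already established upstairs in Theorem~\ref{th_bigalois_ext}.

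I expect the main obstacle to be precisely this last descent of the inverse maps. Unlike the descent of the coactions, which uses only the defining marginal relations of $I_{\mathrm{NS}}$ and $J_i$, verifying $(q\otimes q)\alpha(J_1)=0$ (and its $\beta$-analogue) genuinely requires the finer structure of the NS quotient supplied by Lemma~\ref{l:NS-decomp}: the factorization $p^U=p^Xp^Y$ together with the fact that $P_X,P_Y,P_A,P_B$ are \emph{magic} unitaries, so that collapsing identities such as $\sum_{x_2}p^X_{x_1,x_2}=1$ become available. This is also the step that consumes the cardinality hypotheses $|X_1|=|X_2|$, $|Y_1|=|Y_2|$, $|A_1|=|A_2|$, $|B_1|=|B_2|$ of Definition~\ref{def:gameisom}, without which Lemma~\ref{l:NS-decomp} does not yield the magic-unitary property.
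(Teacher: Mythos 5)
Your proposal is correct and takes essentially the same route as the paper's proof: descend $\delta$ and $\gamma$ by showing that $(\pi_1\otimes q)\circ\delta$ and $(q\otimes\pi_2)\circ\gamma$ annihilate the generators of $I_{\mathrm{NS}}$ via the group-the-sum argument, inherit coassociativity, counitality and commutation of the coactions by composing the unreduced identities with the surjections, and obtain bijectivity of the descended canonical maps by descending the explicit inverses $\eta_\delta,\eta_\gamma$ through well-defined maps $\bar\alpha$ (resp.\ $\bar\beta$).

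Two points of comparison. First, your separate treatment of the $W$-generators is a refinement that the paper glosses over: the paper asserts the $W$-block is ``identical'' to the $U$-block, but it is not, because $J_1$ imposes sums over first-index coordinates on the $W$-block while $I_{\mathrm{NS}}$ imposes sums over second-index coordinates there; the descent genuinely requires the internal factorization of the $H_1$-marginals (Lemma~\ref{l:NS-decomp} applied to the single game $\cl G_1$), exactly as you say --- though the sum that must collapse is $\sum_{c\in A_1} u^W_{(a_1,b_1),(c,d)}$ over the \emph{second} index, shown independent of $a_1$, rather than the $\sum_{a_1}$ you wrote, which is already a defining $J_1$-relation. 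Second, your closing claim that the descent of $\alpha$ ``consumes'' the cardinality hypotheses $|X_1|=|X_2|$, $|Y_1|=|Y_2|$, $|A_1|=|A_2|$, $|B_1|=|B_2|$ overshoots: the theorem (and the paper's proof) assumes only $|X_1\times Y_1|=|X_2\times Y_2|$ and $|A_1\times B_1|=|A_2\times B_2|$, and your bracket vanishes without the magic-unitary property, since the unconditional part of Lemma~\ref{l:NS-decomp} already gives $\sum_{x_2}p^U_{(x_1',y_1'),(x_2,y_2)} = \bigl(\sum_{x_2}p^X_{x_1',x_2}\bigr)p^Y_{y_1',y_2} = \tfrac{|Y_1|}{|Y_2|}\,p^Y_{y_1',y_2}$ in $\cl O_{\mathrm{NS}}$, which is independent of $x_1'$ regardless; only this independence, not the value $1$, is used. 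With that remark weakened, your argument is sound and proves the theorem under exactly its stated hypotheses.
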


\begin{proof}

By Lemma~\ref{lem:JL-Hopf}, $J_i$ are Hopf $*$-ideals in $\cl O(\Lambda_{\cl G_i})$, hence $H_i$ inherit Hopf $*$-structures. Let
\[
\delta:\ \cl O(\Lambda_{\cl G_1},\Lambda_{\cl G_2})\longrightarrow \cl O(\Lambda_{\cl G_1})\otimes \cl O(\Lambda_{\cl G_1},\Lambda_{\cl G_2}),\qquad
\gamma:\ \cl O(\Lambda_{\cl G_1},\Lambda_{\cl G_2})\longrightarrow \cl O(\Lambda_{\cl G_1},\Lambda_{\cl G_2})\otimes \cl O(\Lambda_{\cl G_2})
\]
be the unreduced commuting coactions.

As in the proof of Lemma~\ref{lem:JL-Hopf}, checking on NS fiber generators (on both $U$ and $W$ blocks) yields
\begin{equation}\label{eq:kern-left-right}
\delta(I_{\mathrm{NS}})\ \subseteq\ \ker(\pi_1\otimes q),\qquad
\gamma(I_{\mathrm{NS}})\ \subseteq\ \ker(q\otimes \pi_2).
\end{equation}
On the $U$-block (the $W$-block is identical), take the NS fiber generator
\[
R^U_{x,x';\,y',y''}
\ :=\
\sum_{y\in Y_1} p^U_{(x,y),(x',y')}
\;-\;
\sum_{y\in Y_1} p^U_{(x,y),(x',y'')}
\ \in I_{\mathrm{NS}}.
\]
Since
\[
\delta\!\big(p^U_{\alpha,\beta}\big)
=\sum_{\gamma\in U_1}u^U_{\alpha,\gamma}\otimes p^U_{\gamma,\beta},
\]
we obtain
\[
\delta\!\big(R^U_{x,x';\,y',y''}\big)
=\sum_{(r,s)\in U_1}
\Big(\sum_{y\in Y_1}u^U_{(x,y),(r,s)}\Big)\ \otimes\
\Big(p^U_{(r,s),(x',y')}\;-\;p^U_{(r,s),(x',y'')}\Big).
\]

Apply $\pi_1\otimes q$. By construction of $J_1$, the left factor
$\sum_{y}u^U_{(x,y),(r,s)}$ is independent of $s$ modulo $J_1$, while the right factor is an NS fiber difference and hence vanishes under $q$.
Thus $(\pi_1\otimes q)\circ\delta\!\big(R^U_{x,x';\,y',y''}\big)=0$. The same argument for the right coaction $\gamma$ yields $ \gamma(I_{\mathrm{NS}})\ \subseteq\ \ker(q\otimes \pi_2)$.

\smallskip

By \eqref{eq:kern-left-right}, the formulas
\[
\overline\delta\circ q=(\pi_1\otimes q)\circ \delta,\qquad
\overline\gamma\circ q=(q\otimes \pi_2)\circ \gamma
\]
give rise to well–defined $*$-homomorphisms
$\overline\delta:\cl O_{\mathrm{NS}}\to H_1\otimes \cl O_{\mathrm{NS}}$ and
$\overline\gamma:\cl O_{\mathrm{NS}}\to \cl O_{\mathrm{NS}}\otimes H_2$.
Coassociativity and the counit descend directly from those of $\delta,\gamma$.

For commutation, recall (Theorem \ref{th_bigalois_ext}) the unreduced identity
$(\delta\otimes \id)\circ\gamma=(\id\otimes \gamma)\circ\delta$ on $\cl O(\Lambda_{\cl G_1},\Lambda_{\cl G_2})$.
Apply $\pi_1\otimes q\otimes \pi_2$ and use \eqref{eq:kern-left-right}:
for every $a$,
\[
(\pi_1\otimes q\otimes \pi_2)\circ(\delta\otimes \id)\circ\gamma(a)
=(\pi_1\otimes q\otimes \pi_2)\circ(\id\otimes \gamma)\circ\delta(a).
\]
By the definitions of the descended coactions this is exactly
\[
(\overline\delta\otimes \id)\circ\overline\gamma\big(q(a)\big)
=(\id\otimes \overline\gamma)\circ\overline\delta\big(q(a)\big),
\]
so $(\overline\delta\otimes \id)\circ\overline\gamma=(\id\otimes \overline\gamma)\circ\overline\delta$ on $\cl O_{\mathrm{NS}}$.

\smallskip

Let
\[
\kappa_\delta:\ \cl O(\Lambda_{\cl G_1},\Lambda_{\cl G_2})\otimes \cl O(\Lambda_{\cl G_1},\Lambda_{\cl G_2})\longrightarrow \cl O(\Lambda_{\cl G_1})\otimes \cl O(\Lambda_{\cl G_1},\Lambda_{\cl G_2}),\qquad
\kappa_\delta(a\otimes b)=\delta(a)(1\otimes b)
\]
be the unreduced left canonical map. Define the descended left canonical map
\[
\overline\kappa_\delta:\ \cl O_{\mathrm{NS}}\otimes \cl O_{\mathrm{NS}}
\longrightarrow H_1\otimes \cl O_{\mathrm{NS}},
\qquad
\overline\kappa_\delta(\overline a\otimes \overline b)
:= \overline\delta( a)\,(1\otimes  b).
\]
Equivalently, $\overline\kappa_\delta$ is
the unique linear map making the square commute:
\[
(\pi_1\otimes q)\circ \kappa_\delta\ =\ \overline\kappa_\delta\circ (q\otimes q).
\]
Define a unital $*$-homomorphism
\[
\alpha:\ \cl O(\Lambda_{\cl G_1})\longrightarrow \cl O(\Lambda_{\cl G_1},\Lambda_{\cl G_2})\otimes \cl O(\Lambda_{\cl G_1},\Lambda_{\cl G_2}),\qquad
\alpha(u_{\alpha, \beta}^{U})=\sum_{\gamma \in U_{2}}p_{\alpha, \gamma}^{U}\otimes p_{\beta, \gamma}^{U},
\]
for $\alpha, \beta \in U_{1}$ (and defined similarly on the $u^{W}$ block generators). Furthermore, set
\[
\eta_\delta\ :=\ (\id\otimes m)\circ(\alpha\otimes \id):\
\cl O(\Lambda_{\cl G_1})\otimes \cl O(\Lambda_{\cl G_1},\Lambda_{\cl G_2})\longrightarrow \cl O(\Lambda_{\cl G_1},\Lambda_{\cl G_2})\otimes \cl O(\Lambda_{\cl G_1},\Lambda_{\cl G_2}),
\]
where $m$ is the multiplication map  and recall from the proof of Theorem \ref{th_bigalois_ext} that $\eta_\delta$ is the inverse of $\kappa_\delta$. Similar arguments as in the proof above shows that  we have
\[
(q\otimes q)\circ \alpha=\bar\alpha\circ \pi_1
\]
for a well-defined $*$-homomorphism $\bar\alpha:H_1\to \cl O_{\mathrm{NS}}\otimes \cl O_{\mathrm{NS}}$.
Define
\[
\overline\eta_\delta\ :=\ (\id\otimes \bar m)\circ(\bar\alpha\otimes \id):\
H_1\otimes \cl O_{\mathrm{NS}}\longrightarrow \cl O_{\mathrm{NS}}\otimes \cl O_{\mathrm{NS}},
\]
where $\bar m$ is multiplication on $\cl O_{\mathrm{NS}}$.
It is now straightforward to verify
\[
\overline\eta_\delta\circ \overline\kappa_\delta=\id,
\qquad
\overline\kappa_\delta\circ \overline\eta_\delta=\id,
\]
so $\overline\kappa_\delta:\cl O_{\mathrm{NS}}\otimes \cl O_{\mathrm{NS}}\xrightarrow{ } H_1\otimes \cl O_{\mathrm{NS}}$ is a bijection. (The argument for $\overline \kappa_\gamma$ and $H_2$ is identical.)

\end{proof}

\begin{theorem} 
    Let $\cl G_i=(X_i,Y_i;A_i,B_i,\lambda_i)$, $i=1,2$, be two non-local games. If $\mathcal O_{\mathrm{NS}}(\cl G_1,\cl G_2)\neq 0$ then it  it admits a non-zero $*$-representation as bounded operators on a Hilbert space.
\end{theorem}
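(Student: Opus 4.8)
The plan is to mirror the proof of Theorem~\ref{t_nonzero_implies_trace}, now applied to the bi-Galois structure on $\cl O_{\mathrm{NS}}(\cl G_1,\cl G_2)$ furnished by Theorem~\ref{thm:NS-bigalois-quotients-kappa-kernels-commute}. By that theorem, whenever $\cl O_{\mathrm{NS}}(\cl G_1,\cl G_2)\neq 0$ it is an $H_1$--$H_2$ bi-Galois extension with descended left coaction $\overline\delta:\cl O_{\mathrm{NS}}\to H_1\otimes\cl O_{\mathrm{NS}}$. As in the earlier argument, it suffices to produce a left-invariant state on $\cl O_{\mathrm{NS}}$, since a non-zero $*$-representation into $\cl B(H)$ then follows from \cite[Theorem~5.2.1]{bichon99}.

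First I would realize both $H_1$ and $\cl O_{\mathrm{NS}}$ as quotients of universal unitary $*$-algebras, compatibly with the coactions. Writing $N:=|U_1|+|W_1|$ and $M:=|U_2|+|W_2|$ (equal under the standing cardinality hypotheses), the block-diagonal matrix $u^U\oplus u^W$ has self-adjoint projection entries and is a magic unitary, hence yields a surjection $A_u^0(N)\twoheadrightarrow\cl O(\Lambda_{\cl G_1})$; composing with the Hopf quotient $\pi_1:\cl O(\Lambda_{\cl G_1})\twoheadrightarrow H_1$ (well-defined as a Hopf $*$-morphism by Lemma~\ref{lem:JL-Hopf}) produces a surjective Hopf $*$-morphism $\tilde\pi:A_u^0(N)\twoheadrightarrow H_1$. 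Likewise the block $P_U\oplus P_W$ gives a surjection $A_u^0(N,M)\twoheadrightarrow\cl O(\Lambda_{\cl G_1},\Lambda_{\cl G_2})$, which composed with the NS-quotient $q$ yields $\tilde\sigma:A_u^0(N,M)\twoheadrightarrow\cl O_{\mathrm{NS}}$.

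Next I would verify that the canonical left coaction $\alpha_{N,M}:A_u^0(N,M)\to A_u^0(N)\otimes A_u^0(N,M)$ is intertwined with $\overline\delta$ through these quotient maps, i.e.\ that the square with top edge $\tilde\sigma$, bottom edge $\tilde\pi\otimes\tilde\sigma$, left edge $\alpha_{N,M}$, and right edge $\overline\delta$ commutes. This is immediate by composing the commuting square of Theorem~\ref{t_nonzero_implies_trace} with $\pi_1$ and $q$ and invoking the descent identity $\overline\delta\circ q=(\pi_1\otimes q)\circ\delta$ from Theorem~\ref{thm:NS-bigalois-quotients-kappa-kernels-commute}; checking it on the generators $u_{ij}$ of $A_u^0(N,M)$ is the same one-line computation as before. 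With this compatibility in hand, \cite[Proposition~6.2.6]{bichon99} produces a left-invariant state on $\cl O_{\mathrm{NS}}$, and \cite[Theorem~5.2.1]{bichon99} delivers the desired non-zero $*$-representation on a Hilbert space; as in Theorem~\ref{t_nonzero_implies_trace} one may further upgrade this to a faithful bi-invariant state using \cite[Theorem~3.15]{bigalois}.

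The main obstacle, as I see it, is ensuring that passing to the NS-quotients $H_i$ and $\cl O_{\mathrm{NS}}$ preserves compatibility with the universal framework of \cite{bichon99}: one must confirm that $\tilde\pi$ is genuinely a Hopf $*$-algebra morphism (resting on $J_i$ being a Hopf $*$-ideal, Lemma~\ref{lem:JL-Hopf}) and that $\alpha_{N,M}$ descends consistently through both $\tilde\pi$ and $\tilde\sigma$. Both points are already controlled by the kernel inclusions $\delta(I_{\mathrm{NS}})\subseteq\ker(\pi_1\otimes q)$ established in Theorem~\ref{thm:NS-bigalois-quotients-kappa-kernels-commute}, so no genuinely new analytic input beyond the cited results of Bichon is needed; the remaining work is purely diagrammatic bookkeeping.
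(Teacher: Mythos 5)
Your proposal is correct and follows essentially the same route as the paper: the paper's proof consists precisely of the commuting diagram you describe (the surjections $A_u^0(n,m)\twoheadrightarrow \mathcal O(\Lambda_{\mathcal G_1},\Lambda_{\mathcal G_2})\twoheadrightarrow \mathcal O_{\mathrm{NS}}$ and $A_u^0(n)\twoheadrightarrow \mathcal O(\Lambda_{\mathcal G_1})\twoheadrightarrow H_1$ intertwining $\alpha_{n,m}$ with the descended coaction $\overline\delta$ via Theorem~\ref{thm:NS-bigalois-quotients-kappa-kernels-commute}), followed by diagram chasing and the same appeal to Bichon's results as in Theorem~\ref{t_nonzero_implies_trace}. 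Your additional remarks on Lemma~\ref{lem:JL-Hopf} and the kernel inclusions are exactly the ingredients the paper relies on implicitly, so no gap remains.
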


\begin{proof}
The stated result follows by using Theorem \ref{thm:NS-bigalois-quotients-kappa-kernels-commute}, diagram chasing,
    \[
\begin{tikzcd}[column sep=huge, row sep=large]
A_u^0(n,m) \arrow[r, two heads, "\ \sigma\ "] \arrow[d, "\alpha_{n,m}"']
& \cl O(\Lambda_1,\Lambda_2) \arrow[r, two heads, "q"] \arrow[d, "\delta"]
& \cl O_{\mathrm{NS}} \arrow[d, "\overline\delta"] \\
A_u^0(n)\otimes A_u^0(n,m) \arrow[r, two heads, "\ \pi\otimes\sigma\ "']
& \cl O(\Lambda_1)\otimes \cl O(\Lambda_1,\Lambda_2) \arrow[r, two heads, "\ \pi_1\otimes q\ "']
& H_1\otimes \cl O_{\mathrm{NS}}
\end{tikzcd}
\]
and  invoking \cite[Theorem 6.2.6]{bichon99} as in the proof of Theorem \ref{t_nonzero_implies_trace}.

\end{proof}

\begin{corollary}
    Let $\cl G_i=(X_i,Y_i;A_i,B_i,\lambda_i)$, $i=1,2$, be two non-local games with \(|X_1|=|X_2|\), \(|Y_1|=|Y_2|\), \(|A_1|=|A_2|\), \(|B_1|=|B_2|\). Then 
    \[
    \cl G_1 \cong_{A^*} \cl G_2 \Longleftrightarrow \cl G_1 \cong_{{\rm C}^*} \cl G_2 \Longleftrightarrow \cl G_1 \cong_{\rm qc} \cl G_2.
    \]
\end{corollary}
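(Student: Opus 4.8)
The plan is to transport the argument of Theorem~\ref{t_qc_strats_iso_game} to the non-local game setting, reading off each isomorphism type through the representation-theoretic dictionary of Theorem~\ref{th:NSgameisom} and exploiting the bi-Galois structure supplied by Theorem~\ref{thm:NS-bigalois-quotients-kappa-kernels-commute}. The two soft implications come first. For ${\rm qc}\Rightarrow {\rm C}^*$, a perfect ${\rm qc}$-isomorphism produces, via Theorem~\ref{th:NSgameisom}(iv), a unital $*$-homomorphism $\cl O_{\mathrm{NS}}(\cl G_1,\cl G_2)\to A$ into a unital ${\rm C}^*$-algebra $A$ with faithful tracial state; post-composing with a faithful representation $A\hookrightarrow\cl B(H)$ exhibits a non-zero unital $*$-representation of $\cl O_{\mathrm{NS}}(\cl G_1,\cl G_2)$ on a Hilbert space, which is exactly a perfect ${\rm C}^*$-strategy. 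For ${\rm C}^*\Rightarrow A^*$, such a unital representation on a non-zero Hilbert space forces $1\neq 0$ in the algebra, so $\cl O_{\mathrm{NS}}(\cl G_1,\cl G_2)\neq 0$, which is by definition $A^*$.

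The substance of the statement is the implication $A^*\Rightarrow {\rm qc}$. Assume $\cl O_{\mathrm{NS}}(\cl G_1,\cl G_2)\neq 0$. By Theorem~\ref{thm:NS-bigalois-quotients-kappa-kernels-commute}, the algebra is an $H_1$--$H_2$ bi-Galois extension over the quotient Hopf $*$-algebras $H_i=\cl O(\Lambda_{\cl G_i})/J_i$. The theorem immediately preceding this corollary, whose proof (as in Theorem~\ref{t_nonzero_implies_trace}) runs through \cite[Proposition~6.2.6, Theorem~5.2.1]{bichon99} together with \cite[Theorem~3.15]{bigalois}, shows that the non-vanishing of this bi-Galois extension is equivalent to the existence of a faithful bi-invariant state $\tau:\cl O_{\mathrm{NS}}(\cl G_1,\cl G_2)\to\bb C$.

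To upgrade $\tau$ to a \emph{trace}, I would verify that $H_1$ and $H_2$ are of Kac type. Applying Lemma~\ref{l:NS-decomp} in the diagonal case $\cl G_1=\cl G_2=\cl G_i$, each $H_i$ is the universal unital $*$-algebra generated by four magic unitaries $P_X,P_Y,P_A,P_B$ subject to the block commutation relations; since all generators are selfadjoint projections, the antipode acts by $S(u_{jk})=u_{kj}^{*}=u_{kj}$ and is thus $*$-preserving, whence $H_1,H_2$ are of Kac type by \cite[Prop.~1.7.9]{Neshtuset}. Consequently $\tau$ is a faithful bi-invariant tracial state. The GNS construction for $\tau$ then yields the ${\rm C}^*$-algebra $A:=\overline{\pi_\tau(\cl O_{\mathrm{NS}}(\cl G_1,\cl G_2))}^{\,\|\cdot\|}\subseteq\cl B(H_\tau)$ equipped with the faithful trace induced by $\tau$, and the $*$-homomorphism $\pi_\tau:\cl O_{\mathrm{NS}}(\cl G_1,\cl G_2)\to A$ is exactly the datum required by Theorem~\ref{th:NSgameisom}(iv); hence $\cl G_1\cong_{{\rm qc}}\cl G_2$, closing the cycle.

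I expect the main obstacle to be the Kac-type verification for the \emph{quotient} Hopf $*$-algebras $H_i$, rather than for $\cl O(\Lambda_{\cl G_i})$ themselves: one must confirm that passing to the NS quotient $H_i=\cl O(\Lambda_{\cl G_i})/J_i$ retains a magic-unitary presentation, so that the formula $S(u_{jk})=u_{kj}$ and its $*$-invariance descend to the quotient. This is precisely what the marginal decomposition of Lemma~\ref{l:NS-decomp} guarantees, but it is the one point where the no-signalling relations defining $J_i$ are used in an essential way.
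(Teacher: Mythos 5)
Your proof is correct and takes essentially the same route the paper intends for this corollary: the two soft implications, then $A^*\Rightarrow{\rm qc}$ by combining the bi-Galois structure of Theorem~\ref{thm:NS-bigalois-quotients-kappa-kernels-commute}, the faithful bi-invariant state coming from the preceding theorem (via \cite{bichon99} and \cite[Theorem~3.15]{bigalois}), traciality from the Kac property, GNS, and Theorem~\ref{th:NSgameisom}(iv). The one step you make explicit that the paper leaves implicit --- that the quotient Hopf $*$-algebras $H_i$ retain a magic-unitary presentation so the antipode is $*$-preserving and the bi-invariant state is a trace --- is precisely the analogue of the Kac-type argument used for $\cl O(\Lambda_i)$ in Theorem~\ref{t_nonzero_implies_trace}, and you handle it correctly.
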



\end{document}